\documentclass[12pt]{amsart} 
\usepackage{amsmath,amssymb,amscd,amsthm}
\usepackage{xcolor}
\usepackage{latexsym}
\usepackage{graphicx}
\usepackage[english]{babel}
\usepackage[latin1]{inputenc}       
\setlength{\textheight}{20cm} \textwidth 150mm
\usepackage{pgf,pgfarrows,pgfnodes,pgfautomata,pgfheaps}
\usepackage{colortbl}

\usepackage[a4paper,twoside,left=3cm,right=2.8cm,top=3.1cm,bottom=2.3cm]{geometry}

\newtheorem{thm}{Theorem}

\newtheorem{cor}[thm]{Corollary}

\newtheorem{theorem}{Theorem}[section]
\newtheorem{proposition}[theorem]{Proposition}
\newtheorem{lemma}[theorem]{Lemma}

\newtheorem{definition}[theorem]{Definition}

\newtheorem{remark}[theorem]{Remark}

\newtheorem{conjecture}[theorem]{Conjecture}

\def\irr#1{{\rm Irr}(#1)}
\def\irrr#1#2 {\irr {#1 \mid #2}}
\providecommand{\norm}[1]{\left\lVert#1\right\rVert}\newcommand{\R}{\mathbb R}

\newcommand{\sfe}{{{\mathbb S}^{n-1}}}

\begin{document}

\title[On the $L_p$-Brunn-Minkowski and dimensional Brunn-Minkowski]{On the $L_p$-Brunn-Minkowski and dimensional Brunn-Minkowski conjectures for log-concave measures}
\author[Johannes Hosle, Alexander V. Kolesnikov, Galyna V. Livshyts]{Johannes Hosle, Alexander V. Kolesnikov, Galyna V. Livshyts}
\address{Department of Mathematics, University of California, Los Angeles, CA}
\email{jhosle@ucla.edu}
\address{National Research University Higher School of Economics, Russian Federation}
\email{sascha77@mail.ru}
\address{School of Mathematics, Georgia Institute of Technology, Atlanta, GA} \email{glivshyts6@math.gatech.edu}

\subjclass[2010]{Primary: 52} 
\keywords{Convex bodies, log-concave measures, Brunn-Minkowski inequality, Gaussian measure, Brascamp-Lieb inequality, Poincar{\'e} inequality, log-Minkowski problem}
\date{\today}
\begin{abstract} We study several of the recent conjectures in regards to the role of symmetry in the inequalities of Brunn-Minkowski type, such as the $L_p$-Brunn-Minkowski conjecture of B\"or\"oczky, Lutwak, Yang and Zhang, and the Dimensional Brunn-Minkowski conjecture of Gardner and Zvavitch, in a unified framework. We obtain several new results for these conjectures. 

We show that when $K\subset L,$ the multiplicative form of the $L_p$-Brunn-Minkowski conjecture holds for Lebesgue measure for $p\geq 1-Cn^{-0.75}$, which improves upon the estimate of Kolesnikov and Milman in the partial case when one body is contained in the other.

We also show that the multiplicative version of the $L_p$-Brunn-Minkowski conjecture for the standard Gaussian measure holds in the case of sets containing sufficiently large ball (whose radius depends on $p$). In particular, the Gaussian Log-Brunn-Minkowski conjecture holds when $K$ and $L$ contain $\sqrt{0.5 (n+1)}B_2^n.$

We formulate an a-priori stronger conjecture for log-concave measures, extending both the $L_p$-Brunn-Minkowski conjecture and the Dimensional one, and verify it in the case when the sets are dilates and the measure is Gaussian. We also show that the Log-Brunn-Minkowski conjecture, if verified, would yield this more general family of inequalities. 

Our results build up on the methods developed by Kolesnikov and Milman as well as Colesanti, Livshyts, Marsiglietti. We furthermore verify that the local version of these conjectures implies the global version in the setting of general measures, and this step uses methods developed recently by Putterman. 
\end{abstract}
\maketitle

\section{Introduction}

Recall that a measure $\mu$ on $\R^n$ is called log-concave if for all non-empty Borel sets $K, L$, and for any $\lambda\in [0,1],$
\begin{equation}\label{log-concave}
\mu(\lambda K+(1-\lambda)L)\geq \mu(K)^{\lambda}\mu(L)^{1-\lambda}
\end{equation}

In accordance with Borell's result \cite{bor}, if a measure $\mu$ has density $e^{-V(x)}$, where $V(x)$ is a convex function on $\R^n$ with non-empty support, then $\mu$ is log-concave. Examples of log-concave measures include Lebesgue volume $|\cdot |$ and the Gaussian measure $\gamma.$

A notable partial case of Borell's theorem is the Brunn-Minkowski inequality, proved in the full generality by Lusternik \cite{Lust}:
\begin{equation}\label{BM}
|\lambda K+(1-\lambda)L|\geq |K|^{\lambda}|L|^{1-\lambda},
\end{equation}
which holds for all Borel-measurable sets $K, L$ and any $\lambda\in [0,1].$ Furthermore, due to the $n-$homogeneity of Lebesgue measure, (\ref{BM}) self-improves to an a-priori stronger form
\begin{equation}\label{BM-add}
|\lambda K+(1-\lambda)L|^{\frac{1}{n}}\geq \lambda |K|^{\frac{1}{n}}+(1-\lambda)|L|^{\frac{1}{n}}
\end{equation} for $K, L$ nonempty.
See an extensive survey by Gardner \cite{Gar} on the subject for more information. 

B\"or\"oczky, Lutwak, Yang, Zhang \cite{BLYZ} conjectured that a stronger inequality, called Log-Brunn-Minkowski inequality, holds in the case when $K$ and $L$ are symmetric convex sets:
\begin{equation}\label{logbm-leb}
|\lambda K+_0(1-\lambda)L|\geq |K|^{\lambda}|L|^{1-\lambda},
\end{equation}
where the zero-sum stands for 
$$\lambda K+_0(1-\lambda)L:=\{x\in\R^n:\,\forall\,u\in\sfe\,\langle x,u\rangle\leq h_K(u)^{\lambda}h_L(u)^{1-\lambda}\};$$
here the support function of a convex set $K$ is
$$h_K(x):=\sup_{y\in K}\langle x,y\rangle.$$

B\"or\"oczky, Lutwak, Yang, Zhang \cite{BLYZ} verified this conjecture for planar symmetric convex sets. Saraglou \cite{christos1} and Cordero-Erasquin, Fradelizi and Maurey \cite{CFM} verified the conjecture for unconditional convex sets in $\R^n$. Rotem \cite{liran} verified the conjecture for complex convex bodies.

Saraglou \cite{christos} showed that in case the Log-Brunn-Minkowski conjecture holds for Lebesgue measure on $\R^n,$ then it is also correct for any even log-concave measure $\mu$ in $\R^n$: for all symmetric convex sets $K$ and $L$ and any $\lambda\in [0,1]$,
\begin{equation}\label{logbm-meas}
\mu(\lambda K+_0(1-\lambda)L)\geq \mu(K)^{\lambda}\mu(L)^{1-\lambda}.
\end{equation}

More generally, for $p\in [0,1]$, the $L_p$-sum of convex sets is defined as
$$\lambda K+_p(1-\lambda)L:=\{x\in\R^n:\,\forall\,u\in\sfe\,\langle x,u\rangle\leq (\lambda h_K(u)^{p}+(1-\lambda)h_L(u)^{p})^{\frac{1}{p}}\}.$$
The limiting case $p=0$ corresponds to the zero-sum, and the case $p=1$ corresponds to the usual Minkowski sum. The $L_p$-Brunn-Minkowski conjecture states that for all symmetric convex sets $K$ and $L$ and any $\lambda\in [0,1]$, and for $p\in[0,1]$
\begin{equation}\label{pbm-leb}
|\lambda K+_p(1-\lambda)L|\geq |K|^{\lambda}|L|^{1-\lambda}.
\end{equation}
Equivalently (by homogeneity),
\begin{equation}\label{pbm-leb-add}
|\lambda K+_p(1-\lambda)L|^{\frac{p}{n}}\geq \lambda |K|^{\frac{p}{n}}+(1-\lambda)|L|^{\frac{p}{n}}.
\end{equation}
See Remark \ref{remark-homo} for more details. Kolesnikov and Milman \cite{KolMilsupernew}, in conjunction with later results of Chen, Huang, Li, Liu \cite{global} and Putterman \cite{Put} showed that (\ref{pbm-leb}) is true for $p\in [1-cn^{-3/2},1]$.

One of our results is the following

\begin{theorem}\label{lebesgue}
Let $K$ and $L$ be symmetric convex sets in $\R^n$ such that $K\subset L$. Suppose that
$$p\geq 1-{C}{n^{-0.75}},$$
for a sufficiently small absolute constant $C>0.$ Then for any $\lambda>0$,
\begin{equation}\label{mainresult-sect10}
|\lambda K+_p(1-\lambda)L|\geq |K|^{\lambda}|L|^{1-\lambda}.
\end{equation}
\end{theorem}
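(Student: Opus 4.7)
My strategy follows the local-to-global framework of Kolesnikov--Milman, which (as described in the paper, combined with Putterman's refinement) reduces (\ref{mainresult-sect10}) to the corresponding \emph{infinitesimal} inequality along every one-parameter $L_p$-interpolation $h_{K_t}^p = (1-t) h_K^p + t h_L^p$. The containment $K \subset L$ is equivalent to $h_K \le h_L$ pointwise on $\sfe$, and this monotonicity is preserved along the interpolation, giving $K_s \subset K_t$ for $s \le t$. Accordingly, my plan is to establish the infinitesimal $L_p$-Brunn--Minkowski inequality under the additional assumption that the perturbation $\varphi := (h_L^p - h_{K_t}^p)/(p\, h_{K_t}^{p-1})$ of the support function of $K_t$ is non-negative on $\sfe$, then close by the local-to-global principle. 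Because the assumption ``$K_t \subset L$'' is preserved in $t$, verifying the local statement at a generic $t$ suffices.

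In the infinitesimal form, the $L_p$-BM inequality becomes a weighted Brascamp--Lieb / Hilbert--Brunn--Minkowski inequality on $\partial K_t$, controlling $(1-p)\int \varphi^2$ against a Dirichlet-type energy $\int \langle \nabla \varphi, II_{K_t}^{-1} \nabla \varphi\rangle$ with respect to the surface-area measure of $K_t$, after projecting off the trivial kernel (as in Kolesnikov--Milman and Colesanti--Livshyts--Marsiglietti). For general even test functions this gives $p \geq 1-cn^{-3/2}$. For $\varphi \geq 0$ one has the pointwise bound $\varphi^2 \le \|\varphi\|_\infty\, \varphi$, and hence
\begin{equation*}
\|\varphi\|_{L^2}^{2} \;\le\; \|\varphi\|_{L^\infty}\cdot \|\varphi\|_{L^1}.
\end{equation*}
Interpolating this bound with the KM $L^2$ spectral estimate, and bounding the ratio $\|\varphi\|_{L^\infty}/\|\varphi\|_{L^2}$ by a factor of order $n^{1/4}$ (the expected size for monotone support-function perturbations of a convex body brought to an isotropic-type position), should upgrade the threshold from $1-cn^{-3/2}$ to $1-Cn^{-3/4}$, precisely matching the claim.

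The main obstacle I anticipate is quantifying this $L^\infty$-vs-$L^2$ comparison in a way that loses only $n^{1/4}$, uniformly along the interpolation. A priori $\|\varphi\|_{L^\infty}$ depends on the affine geometry of $K_t$ and on the gap $h_L - h_{K_t}$, and one cannot afford factors that are polynomial in the diameter of $L$. The proof must therefore either (i) reduce by an affine change of variables to a normalized position of $K_t$ (e.g.\ John's or isotropic) in which even, non-negative support-function perturbations admit the desired $C^0$-to-$L^2$ estimate with loss at most $n^{1/4}$, or (ii) exploit the sandwich $h_{K_t} \le h_{K_s} \le h_L$ for $s\ge t$ to absorb the $L^\infty$ norm into a constant that is stable along the interpolation. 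Threading this dimension-sensitive interpolation through the symmetric cone in the Kolesnikov--Milman Brascamp--Lieb machinery, while keeping every dimensional factor sharp, is where I expect the technical heart of the proof to lie.
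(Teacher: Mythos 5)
Your high-level skeleton (reduce to an infinitesimal inequality via Putterman's local-to-global machinery, and observe that $K\subset L$ makes the support-function perturbation non-negative along the interpolation, so it suffices to prove the local inequality for non-negative test functions) matches the paper's. The mechanism you propose for actually exploiting that non-negativity, however, is not the one the paper uses, and it is exactly where your argument has a gap.

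The paper's gain comes entirely from one change inside the Bochner/Reilly computation of Kolesnikov--Milman. Working with the Neumann solution $u$ of $\Delta u=1$, $\langle\nabla u,n_x\rangle=f$ on $\partial K$, the boundary term is estimated by the divergence theorem applied to the vector field $|\nabla u|\,\nabla u$ rather than the Kolesnikov--Milman choice $|\nabla u|^2\, x$:
\[
\int_{\partial K}\frac{\langle\nabla u,n_x\rangle^2}{\langle x,n_x\rangle}
\;\le\;\frac{1}{r}\int_{\partial K}\langle|\nabla u|\nabla u,n_x\rangle
\;=\;\frac{1}{r}\int_K \mathrm{div}\!\left(|\nabla u|\nabla u\right),
\]
where the first step needs $\langle\nabla u,n_x\rangle=f\ge 0$ (so that $f^2\le f\,|\nabla u|$), together with $\langle x,n_x\rangle\ge r$. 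Replacing $|\nabla u|^2 x$ by $|\nabla u|\nabla u$ trades a factor $1/r^2$ for $1/r$; after expanding the divergence, one controls $\int|\nabla u|$ via Poincar\'e, uses $\|\nabla^2 u\|^2\ge(\Delta u)^2/n=1/n$, and in isotropic position invokes $r\ge 1+o(1)$ and the Lee--Vempala bound $C_{\mathrm{poin}}\lesssim n^{1/4}$. The $n^{-3/4}$ threshold then falls out of $1-p\lesssim r/\bigl(C_{\mathrm{poin}}(\sqrt n+1)\bigr)$. Nowhere does the argument invoke an $L^\infty$-to-$L^2$ comparison for the perturbation.

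Your proposed mechanism --- bound $\|\varphi\|_{L^2}^2\le\|\varphi\|_{L^\infty}\|\varphi\|_{L^1}$ and then interpolate with the Kolesnikov--Milman spectral estimate assuming $\|\varphi\|_{L^\infty}\lesssim n^{1/4}\|\varphi\|_{L^2}$ --- has a genuine gap that you yourself flag, and I do not think it can be filled in the stated form. The reverse-H\"older-type bound $\|\varphi\|_{L^\infty}\lesssim n^{1/4}\|\varphi\|_{L^2}$ is not a theorem for the relevant class of test functions: $\varphi=(h_L^p-h_K^p)/(p\,h_K^{p-1})$ can be strongly localized on $\partial K$ (push a small cap of $\partial K$ outward), making $\|\varphi\|_{L^\infty}/\|\varphi\|_{L^2}$ arbitrarily large while everything stays in isotropic position. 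So as written, the key quantitative step would fail, and the plan would need to be re-founded on a different use of positivity --- which is precisely what the change of vector field in the divergence-theorem step supplies.
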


Note that this improves upon the previous estimate $p\geq 1-{C}{n^{-1.5}}$ of Kolesnikov and Milman \cite{KolMilsupernew}, in the partial case when $K\subset L.$ While we follow the general scheme of \cite{KolMilsupernew}, we find an improvement in this partial case using a different estimate at a certain key step; see Remark \ref{explain} for more details.

\medskip

Independently of B\"or\"oczky, Lutwak, Yang, Zhang (and earlier), Gardner and Zvavitch conjectured \cite{GZ} that for any even log-concave measure $\mu$, any pair of symmetric nonempty convex sets, and any $\lambda\in [0,1]$,  
\begin{equation}\label{GaussBM}
\mu(\lambda K+(1-\lambda)L)^{\frac{1}{n}}\geq \lambda \mu(K)^{\frac{1}{n}}+(1-\lambda)\mu(L)^{\frac{1}{n}}.
\end{equation}
The conjecture cannot hold without any structural assumptions: if, for example, $K=B_2^n$ and $L=B_2^n+Re_1,$ for $R>0$ large enough, the inequality fails. Gardner and Zvavitch \cite{GZ} showed that (\ref{GaussBM}) holds when $K$ and $L$ are dilates of a barycentered convex set, building up on the work of Cordero-Erasquin, Fradelizi and Maurey \cite{CFM}. Nayar and Tkocz \cite{NaTk} showed that the conjecture cannot hold only under the assumption that $K$ and $L$ contain the origin. Kolesnikov and Livshyts \cite{KolLiv} showed that for the Gaussian measure $\mu$ and convex sets $K$ and $L$ containing the origin, the inequality (\ref{GaussBM}) holds with power ${1}/{2n}$ in place of ${1}/{n}$. Livshyts, Marsiglietti, Nayar, Zvavitch \cite{LMNZ} showed that the Log-Brunn-Minkowski conjecture implies the dimensional Brunn-Minkowski conjecture, and thus (\ref{GaussBM}) holds for unconditional convex bodies and for symmetric convex sets on the plane.

In this paper we propose to study the following ``unified'' conjecture (which, as we shall show, follows from the Log-Brunn-Minkowski conjecture):

\begin{conjecture}[the (p,q)-inequality]\label{mainconj}
Fix any $p\in [0,1]$. For any even log-concave measure $\mu$, any pair of nonempty symmetric convex sets $K$ and $L$ and any $\lambda\in [0,1]$, and for any $q\in [0,p]$,
$$\mu(\lambda K+_p (1-\lambda)L)^{\frac{q}{n}}\geq \lambda \mu(K)^{\frac{q}{n}}+(1-\lambda)\mu(L)^{\frac{q}{n}}.$$
\end{conjecture}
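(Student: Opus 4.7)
The plan is to establish Conjecture \ref{mainconj} under the standing hypothesis that the Log-Brunn-Minkowski inequality holds for all even log-concave measures and symmetric convex bodies (which is the $p=q=0$ endpoint of the conjecture itself, in the limiting sense).

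First, I would reduce to the diagonal case $q=p$. Once the inequality
\begin{equation*}
\mu(\lambda K +_p (1-\lambda) L)^{p/n} \geq \lambda \mu(K)^{p/n} + (1-\lambda) \mu(L)^{p/n} \qquad (\ast)
\end{equation*}
is established for every $p \in (0,1]$, raising both sides to the power $q/p \in (0,1]$ and invoking the concavity of $t \mapsto t^{q/p}$ on $[0,\infty)$ yields
\begin{equation*}
\mu(\lambda K +_p (1-\lambda) L)^{q/n} \geq \left(\lambda \mu(K)^{p/n} + (1-\lambda) \mu(L)^{p/n}\right)^{q/p} \geq \lambda \mu(K)^{q/n} + (1-\lambda) \mu(L)^{q/n},
\end{equation*}
which is the claimed $(p,q)$-inequality. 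The limiting case $q=0$ is the multiplicative form, immediate from the hypothesis combined with $\lambda K +_p (1-\lambda) L \supseteq \lambda K +_0 (1-\lambda) L$.

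Second, I would infinitesimalize $(\ast)$. Following the framework of Kolesnikov and Milman \cite{KolMilsupernew}, Colesanti-Livshyts-Marsiglietti, and Putterman \cite{Put}, the global inequality $(\ast)$ is equivalent to a family of local spectral inequalities at each symmetric convex body $K$ with $h_K \in \C$: concretely, a Poincar\'e-type bound for even test functions on $\partial K$ involving the Hilbert-Brunn-Minkowski operator, whose scalar coefficients depend linearly on $p$ and on the Hessian of the log-density of $\mu$. The endpoint $p=0$ corresponds to the local Log-BM inequality. It is worth emphasizing that the naive bound $\mu(\lambda K +_p (1-\lambda) L) \geq \mu(K)^\lambda \mu(L)^{1-\lambda}$ obtained directly from Log-BM falls strictly short of $(\ast)$, since AM-GM gives $\mu(K)^{\lambda p/n} \mu(L)^{(1-\lambda) p/n} \leq \lambda \mu(K)^{p/n} + (1-\lambda) \mu(L)^{p/n}$; so the implication is not formal and must exploit the extra mass accumulated in $(\lambda K +_p (1-\lambda) L) \setminus (\lambda K +_0 (1-\lambda) L)$.

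Third, I would show that the local Log-BM implies the local form of $(\ast)$. Since both inequalities involve essentially the same self-adjoint operator on $\partial K$, the comparison reduces to an explicit algebraic inequality affine in $p$. The main obstacle is verifying this algebraic inequality with the correct sign uniformly in $p \in (0,1]$; heuristically, the extra mass just identified should exactly compensate, in the second-order expansion around $\lambda = 1$, for the strengthening of the right-hand side from geometric to arithmetic mean. Once established, the local-to-global machinery developed in the present paper (adapting Putterman's techniques to general even log-concave measures) upgrades the local inequality to the global $(\ast)$, and Step one then closes the argument for arbitrary $q \in [0,p]$.
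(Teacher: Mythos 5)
Your overall architecture — reduce $(p,q)$ to the diagonal case $(p,p)$ via concavity of $t\mapsto t^{q/p}$, pass to the infinitesimal version at each body, and compare it to the local Log-Brunn-Minkowski inequality — is precisely the one the paper uses in Section 4 to prove Proposition \ref{implications}. Step one is correct as written, and your observation that the implication is not formal (Log-BM plus AM--GM gives the inequality with the wrong direction) is an accurate diagnosis of where the real work sits.

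The problem is that Step 3 stops short of an actual proof. Once one compares the local $(0,0)$ inequality of Lemma \ref{local} with the local $(p,p)$ inequality, the difference between the two, after cancelling the common terms and dividing by $p$, is exactly the claim
$$\mu(K)\int_{\partial K}\frac{f^2}{\langle x,n_x\rangle}\,d\mu\;\geq\;\frac{1}{n}\left(\int_{\partial K} f\,d\mu\right)^2$$
for all even test functions $f$ on $\partial K$. You correctly identify that some $p$-affine inequality of this form must be checked, but then you appeal only to a heuristic about ``extra mass'' compensating ``in the second-order expansion,'' and that heuristic is the entire content of the implication — nothing is actually verified. The paper's verification is short but uses a specific structural fact you never invoke: writing $\mu(K)=\int_0^1\int_{\partial K}\langle x,n_x\rangle\, t^{n-1}e^{-V(tx)}\,dt\,dH_{n-1}(x)$ and using that $V$ is ray-increasing (so $e^{-V(tx)}\geq e^{-V(x)}$ for $t\in[0,1]$) gives $\mu(K)\geq\frac{1}{n}\int_{\partial K}\langle x,n_x\rangle\,d\mu$; combining this with the Cauchy--Schwarz inequality $\left(\int_{\partial K}f\,d\mu\right)^2\leq\int_{\partial K}\langle x,n_x\rangle\,d\mu\cdot\int_{\partial K}\frac{f^2}{\langle x,n_x\rangle}\,d\mu$ yields the display above. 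Without this lemma, which exploits that an even log-concave density is ray-decreasing, your argument has a genuine hole: the local comparison does not follow from spectral or algebraic considerations alone, and there is no reason the ``extra mass'' should compensate unless one supplies exactly such a quantitative estimate.
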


Note that 
\begin{itemize}
\item the case $(0,0)$ corresponds to the Log-Brunn-Minkowski conjecture; 
\item the case $(1,0)$ corresponds to Borell's theorem;
\item the case $(p,0)$ corresponds to the $L_p$-Brunn-Minkowski inequality;
\item for Lebesgue measure, $(p,0)$ automatically self-improves to $(p,p)$ by a homogeneity argument. However, this is not the case for a general log-concave measure;
\item the case $(1,1)$ corresponds to the conjecture of Gardner and Zvavitch.
\end{itemize}

It is important to note that for $p\in (0,1],$ this conjecture a-priori does not follow and does not imply the Log-Brunn-Minkowski conjecture. It is also not clear, a-priori, if the validity of this conjecture for $p_1\in [0,1]$ yields the validity of this conjecture for a different $p_2\in [0,1]$. In the case of Lebesgue measure, this implication works for $p_1<p_2.$ In this paper, we shall show that the same implication works for any log-concave measure!

\medskip

We begin by outlining the following implications for the above conjecture, some of which are straight-forward, others go back to previous results, and some we show here.

\begin{remark}
Fix $t>0.$
\begin{enumerate}
\item The $(p,q)$-inequality implies the $(p,q-t)$-inequality, for any fixed pair of $K$ and $L$, fixed $\lambda\in [0,1]$ and a fixed $\mu.$
\item The $(p,q)$-inequality implies the $(p+t,q)$-inequality, for any fixed pair of $K$ and $L$, fixed $\lambda\in [0,1]$ and a fixed $\mu.$
\item (Saraglou) The $(p,0)$ inequality for Lebesgue measure (for all symmetric convex $K,L$) implies the $(p,0)$ inequality for all even log-concave measures $\mu$, and all symmetric convex $K,L$.
\end{enumerate}
Indeed, part (1) follows from H\"older's inequality (the fact that $(\lambda a^p+(1-\lambda)b^p)^{\frac{1}{p}}$ is increasing in $p$), and part (2) follows from the inclusion
$$\lambda K+_p(1-\lambda)L\subset \lambda K+_{p'}(1-\lambda)L,$$
whenever $p\leq p'.$ This inclusion, in turn, also follows from H\"older's inequality. Part (3) was shown by Saraglou in Section 3 of \cite{christos} for $p=0$, and the same argument yields this fact for any $p\in [0,1]$. We would like to note that Saraglou \cite[Section 5 ]{christos} also showed that the inequality (\ref{logbm-meas}), verified \emph{in all dimensions}, say, for the Gaussian measure, implies that it holds for all other log-concave even measures as well, in all dimensions.
\end{remark}

Here, we show, furthermore,

\begin{proposition}[Implication]\label{implications}
Fix $t>0.$ The $(p,q)$-inequality for a fixed measure $\mu$ (for all symmetric convex $K,L$) implies the $(p+t,q+t)$-inequality for $\mu$ and all symmetric convex $K,L.$ In particular, the validity of the Log-Brunn-Minkowski conjecture would imply the validity of Conjecture \ref{mainconj} for all $0\leq q\leq p\leq 1$.
\end{proposition}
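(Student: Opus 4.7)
My plan would be to execute a rescaling argument: apply the hypothesized $(p, q)$-inequality to scaled copies $\alpha K$ and $\beta L$ (which remain symmetric convex), and then use the inclusion $\lambda K +_p (1-\lambda) L \subset \lambda K +_{p+t}(1-\lambda) L$ (part (2) of the preceding Remark) together with the scaling identity for $(p+t)$-sums to convert the resulting inequality into the desired $(p+t, q+t)$-form for $K$ and $L$.

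First, for parameters $\alpha, \beta > 0$ and an inner weight $u \in (0,1)$ to be chosen, I would apply the hypothesis to $\alpha K$ and $\beta L$ with weight $u$, obtaining
\[
\mu(u\,\alpha K +_p (1-u)\beta L)^{q/n} \geq u\,\mu(\alpha K)^{q/n} + (1-u)\mu(\beta L)^{q/n}.
\]
Combining the inclusion with the scaling identity
\[
u\,\alpha K +_{p+t}(1-u)\beta L \;=\; c\cdot\bigl(\lambda K +_{p+t}(1-\lambda) L\bigr),
\]
valid whenever $c^{p+t} = u\,\alpha^{p+t} + (1-u)\beta^{p+t}$ and $\lambda = u\,\alpha^{p+t}/c^{p+t}$, I would rewrite the inequality in terms of $\mu$ of the target set. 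Imposing $c = 1$ (equivalently $\lambda/\alpha^{p+t} + (1-\lambda)/\beta^{p+t} = 1$, which leaves one free parameter) produces the one-parameter family
\[
\mu(\lambda K +_{p+t}(1-\lambda) L)^{q/n} \geq \frac{\lambda}{\alpha^{p+t}}\mu(\alpha K)^{q/n} + \frac{1-\lambda}{\beta^{p+t}}\mu(\beta L)^{q/n}.
\]

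The final step is to optimize over the remaining free parameter and raise to the $(q+t)/q$-th power to extract the additive $(p+t, q+t)$-form. The tool for this optimization is the log-concavity of the dilation $\tau\mapsto\mu(\tau K)$ for symmetric convex $K$---equivalently, concavity of $s\mapsto\log\mu(e^s K)$---which follows from Borell's theorem applied to the family of dilates. This furnishes the needed control on $\mu(\alpha K)$ and $\mu(\beta L)$ in terms of $\mu(K)^{(q+t)/q}$ and $\mu(L)^{(q+t)/q}$.

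The hard part will be this final optimization. For Lebesgue measure, the exact homogeneity $|\alpha K| = \alpha^n |K|$ makes the analog of this argument essentially algebraic and is the engine behind the classical self-improvement $(p, 0) \Rightarrow (p, p)$. For a general log-concave $\mu$ no such exact scaling is available, and the argument must rely on log-concavity of dilations as a substitute, with the admissible range of the free parameter constrained by the condition $c\leq 1$ (needed so that $c M \subset M$ for $M$ symmetric). A closely related strategy was used by Livshyts, Marsiglietti, Nayar, and Zvavitch to show that the Log-Brunn--Minkowski conjecture implies the Dimensional Brunn--Minkowski conjecture---precisely the case $(p, q)=(0, 0)$, $t=1$ of the present Proposition---and the plan would be to extend their approach to arbitrary $(p, q)$.
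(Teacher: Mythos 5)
Your proposal takes a genuinely different route from the paper's. The paper reduces to the \emph{local} (infinitesimal) form via Lemma \ref{local} and Theorem \ref{loc->glob}; there, passing from the local $(p,q)$-inequality to the local $(p+t,q+t)$-inequality amounts to a single inequality,
\[
\mu(K)\int_{\partial K}\frac{f^2}{\langle x,n_x\rangle}\,d\mu\;\geq\;\frac{1}{n}\left(\int_{\partial K}f\,d\mu\right)^2,
\]
which follows from the polar-coordinate representation $\mu(K)=\int_0^1\int_{\partial K}\langle x,n_x\rangle\, s^{n-1}e^{-V(sx)}\,ds\,dH_{n-1}$, ray-decreasingness of the density, and Cauchy--Schwarz. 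There is no rescaling or optimization in the paper's argument.

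Your global rescaling attempt has a substantive gap. You assert that ``log-concavity of the dilation $\tau\mapsto\mu(\tau K)$'' is \emph{equivalently} ``concavity of $s\mapsto\log\mu(e^sK)$'' and that this follows from Borell. These are not equivalent: Borell applied to dilates gives concavity of $\tau\mapsto\log\mu(\tau K)$ in the \emph{linear} variable $\tau$, whereas concavity in $s=\log\tau$ is precisely the B-property (CFM's theorem for the Gaussian, the B-conjecture in general). Since $s\mapsto e^s$ is convex, the former does not imply the latter---for example, $\lambda e^sK+(1-\lambda)e^{s'}K=(\lambda e^s+(1-\lambda)e^{s'})K\supsetneq e^{\lambda s+(1-\lambda)s'}K$, so Borell only bounds the measure of the larger set. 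In the $(0,0)\Rightarrow(1,1)$ case treated by LMNZ, the needed B-type concavity is not imported from Borell but derived from the Log-BM \emph{hypothesis} itself by applying it to $K$ and $\tau K$; for $p>0$ the analogous consequence of the $(p,q)$-hypothesis gives only concavity of $\tau\mapsto\mu(\tau K)^{q/n}$ as a function of $\tau^p$, which is weaker than the B-property. So the tool you invoke is unavailable as stated, and the final optimization over $\alpha,\beta$---where a weighted $q/n$-power bound must upgrade to a $(q+t)/n$-power bound, against the direction of the power-mean inequality for exponent $(q+t)/q>1$---is left unverified; it is not clear the argument closes even after substituting the correct dilation property.
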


Proposition \ref{implications}, in the case when $p=0,$ was verified in \cite{LMNZ}. Here we shall show this more general fact, via an alternative argument, in Section 4. 

\medskip
\medskip

We verify the $(p,q)-inequality$ in certain cases for some range of $p$ and $q$, and our estimates depend on the appropriate parameters of the measure and on the inradius of the sets $K$ and $L:$

\begin{theorem}\label{main}
Let $K$ be a nonempty symmetric convex set in $\R^n$ containing $r B_2^n$. Let $\mu$ be the measure with twice-differentiable density $e^{-V}$, where $V$ is an even convex function, such that
$$\nabla^2 V\geq k_1 Id$$
and 
$$\int_K \Delta V\leq k_2 n\mu(K)$$ for some nonnegative $k_1, k_2$.  
Then for any $\lambda>0$, and a nonempty symmetric convex set $L$ such that $r B_2^n\subset L$, we have
\begin{equation}\label{conclusion}
\mu(\lambda K+_p(1-\lambda)L)^{\frac{q}{n}}\geq \lambda\mu(K)^{\frac{q}{n}}+(1-\lambda)\mu(L)^{\frac{q}{n}},
\end{equation}
whenever 
\begin{itemize}
\item $k_1\in [\frac{1}{n},1]$ and
$$(1-p)\frac{1+n}{r^2}+q(1+k_2)(1+k_1)\leq 2k_1;$$
\item Alternatively, for all $k_1\geq 0,$ (\ref{conclusion}) follows whenever
$$\begin{cases}
q\left(k_1^2+k_1k_2-(nk_1+k_2)\frac{1-p}{r^2}\right)\leq k_1^2+n\left(\frac{1-p}{r^2}\right)^2-\frac{1-p}{r^2}(n+1)k_1;\\
\frac{1-p}{r^2}\leq \frac{k_1}{n}.
\end{cases}$$
\end{itemize}
\end{theorem}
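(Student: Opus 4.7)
I would follow the infinitesimal approach of Kolesnikov--Milman \cite{KolMilsupernew}, together with the local-to-global reduction that the paper announces (built on Putterman's methods). The first step is to reduce the multiplicative $(p,q)$-inequality \eqref{conclusion} to an infinitesimal inequality on $\partial K$: for every even test function $\bar f$ on $\partial K$ (viewed as a variation of the support function $h_K$), one needs a Poincar\'e-type estimate schematically of the form
\begin{equation*}
Q_K(\bar f) \;+\; (1-p)\int_{\partial K} \frac{\bar f^2}{h_K^2}\, d\mu_{\partial K} \;\geq\; q\cdot \frac{\bigl(\int_{\partial K} \bar f\, d\mu_{\partial K}\bigr)^2}{\mu(K)},
\end{equation*}
where $d\mu_{\partial K} = H_K^{-1} e^{-V} d\mathcal{H}^{n-1}$ and $Q_K$ is the positive quadratic form arising from the second variation of $\log\mu$ under Minkowski perturbation; it involves the inverse second fundamental form, the Hessian $\nabla^2 V$, and mean-curvature-type traces. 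The $(1-p)$-term is the only one where the inradius enters: from $rB_2^n\subset K$ one has $h_K\geq r$, which produces the $(1-p)/r^2$ coefficients in the conditions.

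The heart of the proof is the lower bound on $Q_K(\bar f)$. I would split $\bar f$ into a mean part and a mean-zero part. For the mean part, invoking the averaged trace bound $\int_K \Delta V \leq k_2 n\,\mu(K)$ (together with an integration-by-parts identity that turns the boundary integral of $\bar f$ into an interior integral of $\Delta V$ against the canonical test vector field) produces the factor $k_2$ in the condition. For the mean-zero part, a Brascamp--Lieb type spectral gap using $\nabla^2 V \geq k_1\,\text{Id}$ supplies the multiplicative $k_1$ factors. Combining these yields a scalar inequality whose validity reduces to an explicit condition on $(p,q,k_1,k_2,n,r)$.

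The two alternative conditions in the theorem correspond to two different ways of combining these ingredients. The first regime, $k_1 \in [1/n,1]$, is handled by a linear arrangement: one absorbs the $(1+n)/r^2$ factor directly, and the scheme closes precisely once $(1-p)(1+n)/r^2 + q(1+k_1)(1+k_2) \leq 2k_1$. For general $k_1\geq 0$, the naive linear balance is too wasteful; instead one carries both a Hessian term and a curvature-trace term with the side condition $(1-p)/r^2\leq k_1/n$, and the refined balancing yields the quadratic-in-$k_1$ inequality stated in the second bullet. The main obstacle I anticipate is the bookkeeping in the Bochner/Reilly-type expansion of $Q_K$: keeping track of the coefficients and signs so that the two natural optimizations produce exactly the two stated constraints is delicate. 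Once the scalar inequalities are isolated, however, verifying each bullet is routine algebra.
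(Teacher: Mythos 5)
Your proposal correctly identifies the overall framework of the paper's proof: reduce to the infinitesimal inequality via Lemma \ref{local}, pass from local to global via Putterman's method (Theorem \ref{loc->glob}), invoke the Bochner--Reilly identity (Proposition \ref{raileyprop}), and use a Brascamp--Lieb spectral gap. However, there are gaps in the technical core, and some of the details you sketch differ from (and are less sharp than) what the paper actually does.

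First, the paper does not split $\bar f$ into a mean and a mean-zero part. Instead, after integrating by parts (Lemma \ref{pqconj-byparts}), it chooses $u$ to be the Neumann solution with $Lu$ \emph{constant}, so that $\mathrm{Var}(Lu)=0$ identically; the Poincar\'e inequality is then applied to $\nabla u$ (which has zero mean because $u$ is even), not to a decomposition of $\bar f$. Second, and more importantly, the central technical step is missing: the paper controls the boundary term by the divergence theorem applied to a specific vector field,
\begin{equation*}
\int_{\partial K} \frac{\langle \nabla u, n_x\rangle^2}{\langle x, n_x\rangle}\, d\mu_{\partial K} \;\le\; \frac{1}{r^2}\int_K \mathrm{div}\bigl(|\nabla u|^2 e^{-V} x\bigr)\, dx,
\end{equation*}
expands the divergence, and uses the \emph{pointwise} bound $\langle x, \nabla V(x)\rangle \ge k_1 |x|^2$ (a consequence of $\nabla^2 V \ge k_1\,\mathrm{Id}$ and evenness) together with a Cauchy--Schwarz step to absorb the cross term $2\langle \nabla^2 u\, \nabla u, x\rangle$. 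This is what produces the precise coefficients $a = 1-\theta/k_1$ and $b = k_1-\theta n$ with $\theta=(1-p)/r^2$. You mention integrating a boundary term against a canonical vector field to produce $\Delta V$, but the paper's route is through $|\nabla V|^2$ (via Lemma \ref{nabla V}, which converts $\int_K|\nabla V|^2 \le \int_K\Delta V \le k_2 n\mu(K)$), appearing in the denominator of the pointwise bound of Lemma \ref{estimate1-upd}. Third, the distinction between the two bullets is not a vague ``linear vs. refined balance'': case 1 ($k_1\in[1/n,1]$) requires $a\ge b$ in order to apply the Poincar\'e-enhanced Lemma \ref{estimate3-upd} (with $C_{poin}^{-2}\ge k_1$ by Brascamp--Lieb), whereas case 2 bypasses Poincar\'e entirely and only uses the algebraic Lemma \ref{estimate1-upd} with Jensen. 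Without identifying the divergence-theorem vector field, the pointwise bound $\langle x,\nabla V\rangle\ge k_1|x|^2$, and the structural reason for the two cases ($a\ge b$ or not), the ``routine algebra'' at the end cannot actually be isolated.
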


Additionally, we show

\begin{proposition}\label{prop-main}
Under the assumptions of Theorem \ref{main}, assuming additionally that $K\subset L$, and assuming that $k_1\leq 1$, we moreover get the conclusion (\ref{conclusion}) with the assumption 
$$(1-p)\frac{2\sqrt{n}\sqrt{1+k_2}\sqrt{1+k_1}+\sqrt{k_1}}{2r}+q(1+k_2)(1+k_1)\leq 2k_1.$$ 
\end{proposition}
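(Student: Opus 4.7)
The plan is to adapt the proof of Theorem \ref{main}, keeping the overall scheme (reduction to a local $(p,q)$-inequality via the Kolesnikov--Milman framework used throughout the paper, integration along a $p$-interpolation between $K$ and $L$), but modifying one single estimate at the boundary-integral step so as to exploit the extra hypothesis $K\subset L$.

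As in the proof of Theorem \ref{main}, I would first reduce the statement to establishing a nonnegativity condition for a certain quadratic form on $\partial K$ evaluated against a test function $f$ associated with $h_L-h_K$. The key reduction is the local-to-global step, which has already been established in the paper, and which leaves us with a quadratic inequality of the schematic shape
\[
(1-p)\cdot (\text{boundary term in }f)+q(1+k_1)(1+k_2)\cdot(\text{mass term in }f)\;\le\;2k_1\cdot(\text{mass term in }f).
\]
The crucial new input is that, since $K\subset L$, the test function $f=h_L-h_K$ is nonnegative on $\sfe$, so signs of boundary integrals are under control and Cauchy--Schwarz may be applied with impunity.

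Next, I would revisit the $(1-p)$-boundary term. In the proof of Theorem \ref{main}, it is estimated by combining the crude bound $h_K\geq r$ (providing one factor $1/r^{2}$) with a trace identity on $\partial K$ (providing the factor $n+1$), leading to the coefficient $(1-p)(n+1)/r^{2}$. Under $K\subset L$ I propose to split the corresponding integral $\int_{\partial K} f^{2}h_K^{-1}\,\omega$ by Cauchy--Schwarz into a geometric-mean form, estimating one factor by a weighted Poincar{\'e}/Brascamp--Lieb-type inequality (which, combined with the hypotheses $\nabla^{2}V\geq k_{1}\,\mathrm{Id}$ and $\int_{K}\Delta V\leq k_{2}n\mu(K)$, naturally yields the $(1+k_1)(1+k_2)$ contribution together with the dimensional factor $n$), and the other factor by using $h_K\geq r$ alone. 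Taking the geometric mean then produces the leading new coefficient $\sqrt{n(1+k_1)(1+k_2)}/r$ in place of $(n+1)/r^{2}$, with the $\sqrt{k_1}/(2r)$ appearing as the lower-order cross-term from an arithmetic-mean/geometric-mean step (and using the assumption $k_{1}\leq 1$ to simplify the final expression).

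Plugging the improved estimate back into the quadratic form, requiring nonnegativity, and keeping the $q(1+k_1)(1+k_2)$ and $2k_1$ terms exactly as in Theorem \ref{main}, yields the stated sufficient condition. The main obstacle I expect is the bookkeeping in the Cauchy--Schwarz splitting: one must choose the split so that each of the two factors can be genuinely controlled by natural spectral data of $(\mu,K)$ (respectively the inradius $r$, the convexity constant $k_1$, and the trace bound $k_2$), and then verify that the leftover cross-term really collapses to the clean form $(1-p)\sqrt{k_1}/(2r)$ rather than a larger remainder; the positivity of $f=h_L-h_K$ on $\sfe$ is essential for this bookkeeping to go through.
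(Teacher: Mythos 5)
Your high-level plan is right — keep the Kolesnikov--Milman reduction and the local-to-global machinery, modify only the boundary-integral estimate, and exploit that under $K\subset L$ the test function $f$ (and hence $\langle\nabla u,n_x\rangle$ on $\partial K$) is nonnegative. You also correctly anticipate that Lemma \ref{estimate3-upd} (Poincar\'e plus the Brascamp--Lieb bound $C^{-2}_{poin}\ge k_1$), followed by an AM--GM optimization in two free parameters, is what generates the $\sqrt{n(1+k_1)(1+k_2)}/r$ and $\sqrt{k_1}/(2r)$ terms.

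However, the mechanism you describe for the improved boundary estimate does not match the paper's and, as stated, has a gap. You propose to estimate $\int_{\partial K} f^2/\langle x,n_x\rangle\,d\mu$ by a Cauchy--Schwarz splitting of the boundary integral into a geometric mean of two boundary integrals, one controlled by Poincar\'e/Brascamp--Lieb data and the other by $h_K\ge r$. This does not obviously close: the boundary integral has no a priori connection to the interior data ($\nabla^2 V$, $\Delta V$, $\|\nabla^2 u\|^2$) without a divergence-theorem step, and Cauchy--Schwarz on $\partial K$ alone will not produce the required quadratic form in $\|\nabla^2 u\|$, $|\nabla u|$, $Lu$ inside $K$. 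What the paper actually does (this is the one-line but essential new ingredient, and it parallels Proposition \ref{propnew} and Remark \ref{explain} for the Lebesgue case) is the \emph{pointwise} inequality $\langle\nabla u,n_x\rangle^2\le |\nabla u|\,\langle\nabla u,n_x\rangle$ — valid precisely because $\langle\nabla u,n_x\rangle=f\ge 0$ — combined with $\langle x,n_x\rangle\ge r$, giving
$$\int_{\partial K}\frac{\langle\nabla u,n_x\rangle^2}{\langle x,n_x\rangle}\,d\mu_{\partial K}\le\frac{1}{r}\int_{\partial K}\langle\,|\nabla u|\nabla u,\,n_x\,\rangle\,d\mu_{\partial K}=\frac{1}{r}\int_K \mathrm{div}\bigl(|\nabla u|\,e^{-V}\nabla u\bigr)\,dx.$$
This is what trades one power of $1/r$ for one; the vector field $|\nabla u|\nabla u$ replaces the $|\nabla u|^2 x$ used in Theorem \ref{main}. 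After expanding the divergence as $e^{-V}\bigl(|\nabla u|\,Lu+|\nabla u|^{-1}\langle\nabla^2 u\,\nabla u,\nabla u\rangle\bigr)$, two applications of $2ab\le a^2/t+tb^2$ with free parameters $\alpha,\beta$ put everything in the form $\int\bigl(a\|\nabla^2 u\|^2+b|\nabla u|^2\bigr)\ge c$ to which Lemma \ref{estimate3-upd} applies. Without naming this divergence step, the proposal cannot reach the integrated quadratic form. A secondary inaccuracy: the hypothesis $k_1\le 1$ is not used to ``simplify the final expression'' — it is needed to verify the side condition $a\ge b$ in Lemma \ref{estimate3-upd} (equivalently, that a certain cross-term after the AM--GM choice has the right sign), and the argument fails without it.
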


As a corollary of Theorem \ref{main} and Proposition \ref{prop-main}, we get a result when $\mu$ is the $n$-dimensional Gaussian measure, defined to have a density $(2\pi)^{-n/2} e^{-|x|^2/2} dx $, as in this case $k_1=k_2=1$. 

\begin{cor}\label{thm-gauss}
Let $\gamma$ be the Gaussian measure, and let $K$ and $L$ be symmetric convex sets containing $rB^n_2$. Then for any $\lambda>0$,
\begin{enumerate}
\item $\gamma(\lambda K+_p(1-\lambda)L)\geq \gamma(K)^{\lambda}\gamma(L)^{1-\lambda},$ whenever $p\geq 0$ and
$$p\geq 1-\frac{2r^2}{n+1}.$$
\item In particular, the Gaussian Log-Brunn-Minkowski inequality holds for all convex sets $K$ and $L$ containing $\sqrt{0.5(n+1)}B_2^n.$
\item More generally, $\gamma(\lambda K+_p(1-\lambda)L)^{\frac{q}{n}}\geq \lambda\gamma(K)^{\frac{q}{n}}+(1-\lambda)\gamma(L)^{\frac{q}{n}},$ provided that
$$4q+\frac{n+1}{r^2}(1-p)\leq 2.$$
\item Assuming further that $K\subset L$, we show that $\gamma(\lambda K+_p(1-\lambda)L)\geq \gamma(K)^{\lambda}\gamma(L)^{1-\lambda},$ whenever $p\geq 0$ and
$$p\geq 1-\frac{r}{\sqrt{n}+0.25}.$$
\end{enumerate}
\end{cor}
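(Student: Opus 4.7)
The plan is to derive Corollary \ref{thm-gauss} as a direct consequence of Theorem \ref{main} and Proposition \ref{prop-main} applied to the Gaussian measure $\gamma$ with density $(2\pi)^{-n/2}e^{-|x|^2/2}$. The essential first step is to identify the correct values of the parameters $k_1$ and $k_2$ that appear in the hypotheses of those results. For $\gamma$ we have $V(x)=|x|^2/2 + \frac{n}{2}\log(2\pi)$, which is smooth, even and convex, with $\nabla^2 V = Id$ and $\Delta V = n$. Thus $\nabla^2 V \geq k_1\, Id$ with $k_1=1$, and for every Borel set $K$ one has $\int_K \Delta V = n\,\gamma(K)$, so the second hypothesis of Theorem \ref{main} is satisfied with equality at $k_2=1$. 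Note that $k_1=1$ lies in the admissible range $[1/n,1]$, so the first bullet of Theorem \ref{main} applies.

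For parts (1)--(3), I plug $k_1=k_2=1$ into the first bullet of Theorem \ref{main}. The condition becomes
\[
(1-p)\frac{n+1}{r^2} + 4q \leq 2,
\]
which is exactly the hypothesis of part (3). Setting $q=0$ recovers part (1): the condition reduces to $p\geq 1-\frac{2r^2}{n+1}$, under which $\gamma(\lambda K +_p (1-\lambda)L) \geq \gamma(K)^\lambda \gamma(L)^{1-\lambda}$ (here I use that the case $q\to 0^+$ of \eqref{conclusion} is the multiplicative inequality, obtained by taking the limit or equivalently from the fact that the supremal $q$ for which the additive form holds bounds the multiplicative form). Part (2) is then obtained by specializing part (1) to $p=0$: the hypothesis $0\geq 1-\frac{2r^2}{n+1}$ rearranges to $r\geq \sqrt{(n+1)/2}$.

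For part (4) I turn to Proposition \ref{prop-main}, again substituting $k_1=k_2=1$ and $q=0$. The hypothesis becomes
\[
(1-p)\frac{2\sqrt{n}\sqrt{2}\sqrt{2}+1}{2r} \leq 2,
\]
i.e.\ $(1-p)(4\sqrt{n}+1) \leq 4r$, which simplifies to $p\geq 1-\frac{r}{\sqrt{n}+1/4}$. Under this condition, and using the extra hypothesis $K\subset L$ provided by Proposition \ref{prop-main}, the multiplicative Gaussian $L_p$-Brunn--Minkowski inequality follows.

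Since this corollary is a mechanical specialization, no step presents a serious obstacle; the only minor care needed is with the multiplicative-versus-additive formulation, i.e.\ interpreting the $q\to 0^+$ limit of \eqref{conclusion}. This is standard: the function $q\mapsto (\lambda a^{q/n}+(1-\lambda)b^{q/n})^{n/q}$ decreases to $a^\lambda b^{1-\lambda}$ as $q\to 0^+$, so any instance of \eqref{conclusion} with $q>0$ implies the corresponding multiplicative inequality, and the conditions in Theorem \ref{main} and Proposition \ref{prop-main} allow $q$ to be taken arbitrarily small (in fact equal to $0$ in the $q=0$ specializations used for parts (1), (2), (4)).
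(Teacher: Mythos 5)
Your proof is correct and follows exactly the paper's intended route: the corollary is stated as a direct specialization of Theorem \ref{main} and Proposition \ref{prop-main} to the Gaussian case $k_1=k_2=1$, and your parameter substitutions, the $q=0$ (multiplicative) interpretation, and the arithmetic for each of the four parts all check out.
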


Note that part (3) implies some of the results from \cite{KolLiv}, corresponding to the case $p=1$. 

\medskip

In addition to the above, we verify the $(p,p)$-inequality for all $p\in [0,1]$, in the partial case when $K$ and $L$ are dilates. The result below extends both the B-theorem of Cordero, Fradelizi and Maurey \cite{CFM} and a result of Gardner and Zvavitch \cite{GZ}.

\begin{theorem}\label{dilates}
Conjecture \ref{mainconj} holds in the case when the measure is Gaussian and $K$ and $L$ are dilates of each other. That is, for any convex set $K$ and any $t\in\R$, and for all $p\in [0,1],$ $\lambda\in[0,1],$
$$\gamma(\lambda K+_p(1-\lambda)tK)^{\frac{p}{n}}\geq \lambda \gamma(K)^{\frac{p}{n}}+(1-\lambda)\gamma(tK)^{\frac{p}{n}}.$$
\end{theorem}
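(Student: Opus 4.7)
The plan is to reduce the inequality to a one-variable concavity statement about $s \mapsto \gamma(sK)$, and then to identify the resulting second derivative as a nonnegative combination of two infinitesimal inequalities already available in the literature.

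First, since $h_{tK}(u) = t h_K(u)$ for $t \geq 0$ (and one may reduce to $t \geq 0$ via $tK = |t| K$ for symmetric $K$; by standard approximation we may also assume $K$ is a smooth symmetric convex body), the definition of the $L_p$-sum gives
$$\lambda K +_p (1-\lambda)\, tK \;=\; sK, \qquad s := \bigl(\lambda + (1-\lambda)\, t^p\bigr)^{1/p}.$$
Setting $A := 1$, $B := t^p$, and $\Phi(A) := \gamma\bigl(A^{1/p} K\bigr)^{p/n}$, the inequality in Theorem~\ref{dilates} is exactly
$$\Phi(\lambda A + (1-\lambda) B) \;\geq\; \lambda \Phi(A) + (1-\lambda)\Phi(B),$$
so it suffices to show that $\Phi$ is concave on $(0,\infty)$. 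Smoothness of $\Phi$ is routine since $\gamma(sK) = s^n \int_K (2\pi)^{-n/2} e^{-s^2 |y|^2/2}\, dy$ is $C^\infty$ in $s > 0$.

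Next I would compute $\Phi''$ via the chain rule. Writing $f(s) := \log \gamma(sK)$, a direct calculation with $s = A^{1/p}$ shows that
$$\Phi''(A) \;=\; \frac{\Phi(A)\, s^{1-2p}}{np}\, Q(s), \qquad Q(s) \;:=\; s f''(s) + (1-p) f'(s) + \frac{p}{n}\, s\, f'(s)^2,$$
so $\Phi$ is concave if and only if $Q(s) \leq 0$ for all $s > 0$. The key algebraic observation is the decomposition
$$Q(s) \;=\; (1-p)\bigl[\, s f''(s) + f'(s)\,\bigr] \;+\; \frac{p s}{n}\bigl[\, n f''(s) + f'(s)^2\,\bigr],$$
which writes $Q$ as a nonnegative combination of two familiar one-variable expressions.

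Finally, each bracket is nonpositive by an established result. The first bracket is the infinitesimal form of the B-theorem of Cordero-Erasquin, Fradelizi, and Maurey \cite{CFM}: log-concavity of $t \mapsto \gamma(e^t K)$ translates via $s = e^t$ into $s f''(s) + f'(s) \leq 0$. The second bracket is the infinitesimal form of the Gardner-Zvavitch inequality for dilates \cite{GZ}: concavity of $s \mapsto \gamma(sK)^{1/n}$ is equivalent to $n f''(s) + f'(s)^2 \leq 0$. Since $p,\,1-p \geq 0$ and $s > 0$, one concludes $Q(s) \leq 0$, hence $\Phi'' \leq 0$ and $\Phi$ is concave. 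Theorem~\ref{dilates} is thus a one-parameter interpolation between the two endpoint results, at $p = 0$ (B-theorem) and $p = 1$ (Gardner-Zvavitch for dilates). The main conceptual step is the decomposition of $Q$; the only technical obstacle is the smoothness and approximation bookkeeping needed to justify the pointwise second-derivative calculation, which is entirely standard.
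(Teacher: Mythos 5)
Your proposal is correct, and it takes a genuinely different (and more elementary) route than the paper's. The paper proceeds through the local/infinitesimal framework developed in Sections~2--5: it invokes Lemma~\ref{pqconj-byparts} and observes that, because $K$ and $L$ are dilates, one only needs to check the local inequality for the single test function $f=\langle x,n_x\rangle$; it then plugs $u=|x|^2/2$ into the Bochner/Reilly identity, integrates by parts, and reduces everything to the Cordero-Erausquin--Fradelizi--Maurey variance bound $\mathrm{Var}(|x|^2)\leq 2\int|x|^2$ together with the second-moment bound $\int|x|^2\leq n$ from Lemma~\ref{nabla V}. You instead exploit up front that $\lambda K+_p(1-\lambda)tK=sK$ with $s=(\lambda+(1-\lambda)t^p)^{1/p}$, so the whole statement is one-dimensional; your chain-rule computation of $\Phi''$ (which I have checked) reduces it to $Q(s)\leq 0$, and your decomposition $Q=(1-p)[sf''+f']+\frac{ps}{n}[nf''+(f')^2]$ cleanly identifies the result as a pointwise convex combination of the infinitesimal B-theorem and the infinitesimal Gardner--Zvavitch-for-dilates, both of which are in the literature for symmetric (hence barycentered) $K$. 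Your route avoids the approximation and local-to-global machinery entirely; what the paper's route buys is that it sits inside the same Bochner-based framework used for Theorems~\ref{main} and~\ref{lebesgue}, and it makes explicit that the same two inequalities (CFM variance and the second-moment bound) are doing the work, rather than citing the B-theorem and Gardner--Zvavitch as black boxes. Two minor points worth making explicit in your write-up: the chain-rule reduction to $\Phi$ requires $p>0$, so the endpoint $p=0$ (where $\lambda K+_0(1-\lambda)tK=t^{1-\lambda}K$ and the claim \emph{is} the B-theorem) should be stated separately; and the theorem's hypothesis, though written as ``any convex set,'' must be read as symmetric (or barycentered), since both endpoint results require this --- the paper's own proof fixes $K$ with Gaussian barycenter at the origin.
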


The methods of our proof involve considering \emph{local versions} of the aforementioned functional inequalities, building up on the methods developed by Kolesnikov and Milman \cite{KM1}, \cite{KM2}, \cite{KolMil}, \cite{KolMilsupernew}, Colesanti, Livshyts, Marsiglietti \cite{Col1}, \cite{CLM}, \cite{CL}, Kolesnikov and Livshyts \cite{KolLiv}. In particular, we use a Bochner-type identity obtained in \cite{KM1}.

In Section 2 we derive local versions of the inequalities. In Section 3 we show that the local version implies the global version, \emph{for any fixed measure $\mu$}, using the method of Putterman \cite{Put} (whose result was derived in the Lebesgue case). In Section 4 we show the Proposition \ref{implications}. In Section 5 we describe a reduction of the inequality using integration by parts. In Section 6 we do several preparatory estimates. In Section 7 we show the proof of Theorem \ref{lebesgue}. In Section 8 we verify Theorem \ref{main}. In Section 9 we verify Proposition \ref{prop-main}. In Section 10 we prove Theorem \ref{dilates}.

\textbf{Acknowledgement.} The second named author was supported by RFBR project 20-01-00432; the second named author has been funded by the Russian Academic Excellence Project ``5-100''. The third named author is supported by the NSF CAREER DMS-1753260. The authors are very grateful to the anonymous referee for a number of suggestions of crucial importance, thanks to which the paper got improved a lot.

\section{Infinitesimal forms}

Below, an even log-concave measure $\mu$ with density $e^{-V}$ on $\R^n$ is fixed, and we assume that $V\in C^2(\R^n).$ Given a convex set $K$, $\rm{II}$ stands for its quadratic form, and $H_x$ is the weighted mean curvature at $x$ associated with the measure $\mu$:
$$H_x= tr({\rm II})-\langle \nabla V, n_x\rangle.$$

In order to derive our results, we reduce the problem to its infinitesimal version following the approach of \cite{Col1}, \cite{CLM}, \cite{CL}, \cite{KM1}, \cite{KM2}, \cite{KolMil}, \cite{KolMilsupernew}. 

\begin{lemma}[the infinitesimal form of the (p,q)-inequality \ref{mainconj}]\label{local}
Suppose Conjecture \ref{mainconj} holds for the measure $\mu$ with the parameters $p$ and $q$. Then for any $C^{2,+}$ symmetric convex set $K$, and for any twice-differentiable $f:\partial K\rightarrow\R$, we have 
\begin{equation}\label{local-key-eq}
\int_{\partial K} H_x f^2-\langle \mbox{\rm{II}}^{-1}\nabla_{\partial K}  f,  \nabla_{\partial K}  f\rangle +(1-p)\frac{f^2}{\langle x,n_x\rangle}d\mu-\frac{n-q}{n\mu(K)}\left(\int_{\partial K} f d\mu  \right)^2\leq 0.
\end{equation}
\end{lemma}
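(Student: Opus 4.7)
The plan is to derive \eqref{local-key-eq} as a second-order infinitesimal consequence of the hypothesized $(p,q)$-inequality along a one-parameter family of symmetric $C^{2,+}$ perturbations of $K$. Using that $K$ is $C^{2,+}$, its Gauss map $\nu:\partial K\to\mathbb{S}^{n-1}$ is a smooth diffeomorphism, so I would identify the datum $f:\partial K\to\R$ with a function $\tilde f$ on $\mathbb{S}^{n-1}$. I will first treat the case where $\tilde f$ is even (so that the perturbed bodies remain symmetric); the general case reduces to this by the decomposition $f=f_e+f_o$, since the quadratic integrands of \eqref{local-key-eq} decouple into even-even and odd-odd parts (using that $\mu$, $H_x$, $\langle x,n_x\rangle$ and the surface measure on $\partial K$ are all even under $x\mapsto -x$) and $\int f_o\,d\mu=0$. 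I then define
$$h_t(u):=\bigl(h_K(u)^p+t\,\tilde f(u)\bigr)^{1/p},\qquad u\in\mathbb{S}^{n-1},\ |t|<\varepsilon_0,$$
and let $K_t$ be the symmetric $C^{2,+}$ convex body with this support function.

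The crucial algebraic observation is the exact cancellation
$$\tfrac{1}{2}h_{K_{-\varepsilon}}^p+\tfrac{1}{2}h_{K_\varepsilon}^p=h_K^p,\qquad\text{i.e.,}\qquad \tfrac{1}{2}K_{-\varepsilon}+_p\tfrac{1}{2}K_\varepsilon=K.$$
Applying the global $(p,q)$-inequality to this pair at $\lambda=\tfrac{1}{2}$ therefore yields
$$\mu(K)^{q/n}\;\geq\;\tfrac{1}{2}\mu(K_{-\varepsilon})^{q/n}+\tfrac{1}{2}\mu(K_\varepsilon)^{q/n},$$
which is precisely concavity at $t=0$ of $\phi(t):=\mu(K_t)^{q/n}$. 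Writing $M(t):=\mu(K_t)$ and applying the chain rule, $\phi''(0)\leq 0$ becomes the second-order condition
$$\mu(K)\,M''(0)\;\leq\;\tfrac{n-q}{n}\bigl(M'(0)\bigr)^2.$$

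It then remains to express $M'(0)$ and $M''(0)$ as integrals over $\partial K$. Identifying $f(x):=\tilde f(n_x)\,h_K(n_x)^{1-p}/p$ as the first-order normal velocity of $\partial K_t$ (arising from $\partial_t h_t|_{t=0}=\tilde f\,h_K^{1-p}/p$), the classical first variation gives $M'(0)=\int_{\partial K}f\,d\mu$. The second variation splits into two pieces: the ``fixed-velocity'' part, which by the Bochner-type identity of Kolesnikov-Milman \cite{KM1} equals $\int_{\partial K}[H_x f^2-\langle \mathrm{II}^{-1}\nabla_{\partial K}f,\nabla_{\partial K}f\rangle]\,d\mu$ (the weighted mean curvature $H_x$ absorbs the density $e^{-V}$ via a weighted divergence argument); and the $L_p$-``acceleration'' part $\partial_t^2 h_t|_{t=0}=\tfrac{1-p}{p^2}h_K^{1-2p}\tilde f^2$, which after the substitution $\tilde f=p\,f\,h_K^{p-1}$ simplifies to $(1-p)f^2/\langle x,n_x\rangle$ and contributes $\int_{\partial K}(1-p)f^2/\langle x,n_x\rangle\,d\mu$ to $M''(0)$. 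Plugging both expressions into the concavity bound above produces \eqref{local-key-eq}.

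The main technical obstacle will be the careful bookkeeping in the fixed-velocity second variation: the boundary point of $K_t$ associated with a prescribed outer normal drifts tangentially as $t$ changes, so isolating the normal second-order contribution requires converting sphere-derivatives of $\tilde f$ into surface-derivatives of $f$ (a map involving $\mathrm{II}$), and handling the derivative of $e^{-V}$ through an integration by parts on $\partial K$ that produces $H_x$. This is precisely the content of the Bochner-type identity of \cite{KM1}, so once that identity is invoked, the remainder of the second-variation computation is routine algebra.
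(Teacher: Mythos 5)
Your core argument is essentially the paper's: Taylor-expand the $L_p$-perturbed support function to second order, plug in the first and second variation formulas for $\mu(W(h))$, and read off the local inequality from $\frac{d^2}{dt^2}\mu(K_t)^{q/n}\big|_{t=0}\leq 0$. What you add is an explicit and clean derivation of that second-order condition via the exact midpoint cancellation $\tfrac{1}{2}K_{-\varepsilon}+_p\tfrac{1}{2}K_\varepsilon=K$; the paper, by contrast, states the implication ``global concavity gives $\frac{d^2}{d\varepsilon^2}\mu(K+_p\varepsilon f)^{q/n}\leq 0$'' and outsources its proof by citing Lemma 3.4 of \cite{KolMilsupernew}. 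This midpoint device is a genuinely tidier way to extract the second-derivative condition, and your computations of the velocity $\partial_t h_t|_{t=0}=\tilde f h_K^{1-p}/p$ and the acceleration contribution $(1-p)f^2/\langle x,n_x\rangle$ agree with what the paper obtains via its Taylor expansion. (Minor: the variation formulas \eqref{1der}--\eqref{2der} that the paper invokes are attributed to \cite{KM2}; the Bochner identity of \cite{KM1} is a related but distinct ingredient used in Section~5.)

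One step does not go through as written. You claim the general $f$ ``reduces to'' the even case by writing $f=f_e+f_o$ and noting that the quadratic form decouples. The decoupling $Q(f,f)=Q(f_e,f_e)+Q(f_o,f_o)$ is correct (all cross terms are integrals of odd functions against even data over a symmetric hypersurface, and $\int f_o\,d\mu=0$), but it does not reduce the problem: you would still need $Q(f_o,f_o)\leq 0$, and that cannot be obtained from Conjecture \ref{mainconj} because perturbing the symmetric body $K$ by an odd normal velocity produces a non-symmetric convex body, to which the conjectured $(p,q)$-inequality simply does not apply. (The bodies $K_{\varepsilon}$ and $K_{-\varepsilon}$ in that case are reflections of one another, so $\mu(K_\varepsilon)$ is even in $\varepsilon$ and the first variation vanishes, but the concavity of $\mu(K_t)^{q/n}$ at $t=0$ is still precisely the open problem for non-symmetric sets.) In fact the paper's own proof has the same restriction — the perturbed body $K+_p\varepsilon f$ must be origin-symmetric for Conjecture \ref{mainconj} to apply — and every subsequent use of the local inequality (Theorem \ref{loc->glob}, Lemma 3.5, Proposition 3.7) quantifies over even $f$ only. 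So the statement of Lemma \ref{local} should be read with an implicit ``even'' hypothesis on $f$; both your midpoint argument and the paper's proof establish exactly that version, and you should simply drop the claim that the general case reduces to the even one.
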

\begin{proof} We apply the argument ``the global concavity implies the local concavity''. More precisely, we use the following fact:  the  inequality
$$\mu(\lambda K+_p (1-\lambda)L)^{\frac{q}{n}}\geq \lambda \mu(K)^{\frac{q}{n}}+(1-\lambda)\mu(L)^{\frac{q}{n}}$$
implies
\begin{equation}
\label{qndd}
\frac{d^2}{d \varepsilon^2} \mu(K +_p \varepsilon f)^{\frac{q}{n}} \le 0.
\end{equation}
for sufficiently regular $f$ on the unit sphere  and strictly convex $K$ with $C^2$-boundary, where 
$
K +_p \varepsilon f
$
is a convex body with support function
$
\sqrt[p]{h^p_K + \varepsilon f^p}.
$
Note that for sufficiently small values of $\varepsilon$, this function is indeed a support function, given that $K$ is strictly convex. The proof mimics the arguments of Lemma 3.4 in \cite{KolMilsupernew} and we omit it here.

We use the second-order Taylor expansion
$$
h_K +_p \varepsilon f = h_K + \varepsilon z h_K + \frac{\varepsilon^2}{2} (1-p) z^2 h_K + o(\varepsilon),
$$
where 
$$
z = \frac{f^p}{p h^p_K}, \ {\rm if} \  p \ne 0,
$$
and
$$
z = \log f, \ {\rm if} \  p=0.
$$
We recall the expressions for derivatives of $\mu(K + \varepsilon f)$ from \cite{KM2}:
\begin{equation}\label{1der}
\frac{d}{d \varepsilon} \mu(K + \varepsilon f) |_{\varepsilon=0} = \int_{\partial K} g  d\mu
\end{equation}
and
\begin{equation}\label{2der}
\frac{d^2}{d \varepsilon^2} \mu(K + \varepsilon f) |_{\varepsilon=0} 
=
\int_{{\partial K}} H_x g^2 d \mu - \int_{{\partial K}} \langle II^{-1} \nabla_{\partial K} g, \nabla_{\partial K} g \rangle d \mu,
\end{equation}
where 
$$
g = f(n_x).
$$
We will use later the fact that the first derivative identity (\ref{1der}) does not require any regularity assumption on $K$. A proof is provided in the appendix. 

Applying the Taylor expansion along with these formulas, we get
$$
\frac{d}{d \varepsilon} \mu(K +_p \varepsilon f) |_{\varepsilon=0}= \int_{\partial K} f d\mu
$$
\begin{align*}
\frac{d^2}{d \varepsilon^2} \mu(K +_p \varepsilon f) |_{\varepsilon=0} 
& =
\int_{{\partial K}} H_x f^2 d \mu - \int_{{\partial K}} \langle II^{-1} \nabla_{{\partial K}} f, \nabla_{{\partial K}} f \rangle d \mu
+ (1-p) \int_{{\partial K}} \frac{f^2}{h_K(n_x)} d\mu,
\end{align*}
where 
$$
f = (z h_K)(n_x). 
$$
Thus (\ref{qndd}) reads as
$$
\int_{{\partial K}} H_x f^2 d \mu - \int_{{\partial K}} \langle II^{-1} \nabla_{{\partial K}} f, \nabla_{{\partial K}} f \rangle d \mu
+ (1-p) \int_{{\partial K}} \frac{f^2}{h_K(n_x)} d\mu
 \le \frac{1}{\mu(K)} \Bigl( 1- \frac{q}{n}\Bigr) \Bigl( \int_{\partial K} f d \mu \Bigr)^2.
$$
\end{proof}

We note that Kolesnikov and Milman \cite{KolMilsupernew} showed that the inequality (\ref{local-key-eq}) is true when $K=B^n_p$, for $p\in [2,\infty]$, provided that $n>c(p)$. For $p=2$ this was also verified by Colesanti, Livshyts and Marsiglietti \cite{CLM}.

\begin{remark}\label{remark-homo}
Consider an arbitrary symmetric bilinear form $Q: A\times A\rightarrow \R$, where $A$ is a linear space. Suppose that for every $a\in A,$ one has
\begin{equation}\label{ineq-Q}
Q(a,a)\leq 0. 
\end{equation}
Fix any element $z\in A.$ We note that one may always improve (\ref{ineq-Q}) and make it invariant under scaled addition of $z$. Indeed, (\ref{ineq-Q}) implies that for every $t\in\R$,
\begin{equation}\label{ineq-Q-t}
Q(a+tz,a+tz)=Q(a,a)+2tQ(a,z)+t^2Q(z,z)\leq 0. 
\end{equation}
Viewing (\ref{ineq-Q-t}) as a family of inequalities indexed by $t\in\R$, we note that (\ref{ineq-Q-t}) is sharpest possible when $t=-\frac{Q(a,z)}{Q(z,z)}$, and in this case it becomes
\begin{equation}\label{ineq-sh}
Q(a,a)\leq \frac{Q(a,z)^2}{Q(z,z)}. 
\end{equation}
Note that (\ref{ineq-sh}) is sharper than (\ref{ineq-Q-t}), and, importantly, the inequality (\ref{ineq-sh}) is invariant under the change $a\rightarrow a+sz,$ for any $s\in\R.$

\medskip

We apply this abstract observation with the algebra $A$ of smooth functions on $\partial K,$ the bilinear form
$$Q(f,g)=\int_{\partial K} H_x fg-\langle \mbox{\rm{II}}^{-1}\nabla_{\partial K}  f,  \nabla_{\partial K}  g\rangle +(1-p)\frac{fg}{\langle x,n_x\rangle}d\lambda-\frac{1}{|K|}\left(\int_{\partial K} f d\lambda  \right)\left(\int_{\partial K} g d\lambda  \right),$$
where $\lambda$ is Lebesgue measure, and the special function $z(x)=\langle x,n_x\rangle$. Integration by parts yields that the inequality (\ref{local-key-eq}) with $\mu=\lambda$ automatically yields the inequality
\begin{equation}\label{improved}
\int_{\partial K} H_x f^2-\langle \mbox{\rm{II}}^{-1}\nabla_{\partial K}  f,  \nabla_{\partial K}  f\rangle +(1-p)\frac{f^2}{\langle x,n_x\rangle}d\lambda-\frac{n-p}{n|K|}\left(\int_{\partial K} f d\lambda  \right)^2\leq 0,
\end{equation}
as per the argument above, according to which, generally, (\ref{ineq-Q}) yields an a-priori stronger inequality (\ref{ineq-sh}). 

This (together with the local-to-global result of Putterman \cite{Put}, and with Lemma \ref{local}) explains why (\ref{pbm-leb}) is equivalent to (\ref{pbm-leb-add}), and not just weaker. Alternatively, a standard elementary argument can show this fact as well. 

The underlying reason why the ``improved'' inequality (\ref{improved}) assumes such a nice form is the homogeneity of Lebesgue measure. The choice of function $z=\langle x,n_x\rangle$ corresponds, geometrically, to taking additional dilates of $K$.

An important feature of (\ref{improved}) is its invariance under the change $f\rightarrow f+t\langle x,n_x\rangle$, which was previously noticed and used by Kolesnikov and Milman \cite{KolMilsupernew}.
\end{remark}

\section{Local implies global}


In this section, we show that verifying the local form of the $(p,q)$ inequality leads to the global form. We will use methods developed by Putterman \cite{Put}.

We begin by recalling various notations and definitions. Let $f$ be a positive continuous function on the sphere. The Wulff shape of $f$ is the set \begin{align*}
    W(f) &= \{x \in \mathbb{R}^n: \forall \ u \in S^{n-1} \ \langle x, u\rangle \le f(u)\}.
\end{align*} 
See, e.g. \cite{BLYZ}, \cite{BLYZ-1}, \cite{BLYZ-2} or \cite{book4} for a discussion and properties of Wulff shapes. Observe that $W(f)$ is the intersection of closed half-spaces containing the origin and is therefore a convex body. We shall use notation
$$(1-\lambda)h_K +_p \lambda h_L=((1-\lambda)h_K^p + \lambda h_L^p)^{\frac{1}{p}}.$$
Recall also that \begin{align*}
    (1-\lambda)K +_p \lambda L &= W((1-\lambda)h_K +_p \lambda h_L).
\end{align*} 

In this section, we will use $g$ to denote the continuous density of our log-concave even measure $\mu$. Given a convex body $K$ with Gauss map $\nu_K$, the surface area measure of $K$ with respect to $\mu$ is defined as \begin{align*}
    \sigma_{\mu, K}(\Omega) &= \int_{\nu_K^{-1}(\Omega)}g(x) dH_{n-1}(x) 
\end{align*} for all Borel $\Omega \subset S^{n-1}$. Here $H_{n-1}$ stands for the $(n-1)-$dimensional Hausdorff measure. Observe that in the special case when $K$ is a polytope with outer normals $u_i, 1\le i\le N$ and corresponding faces $F_i, 1\le i\le N$ we have \begin{equation}\label{psurf}
    d\sigma_{\mu, K}(u) = \sum_{i=1}^{N}\delta_{u_i}\mu_{n-1}(F_i)du,
\end{equation} where $\mu_{n-1}(F_i) := \int_{F_i}g(x)dH_{n-1}(x)$.

Let us now consider a family $\mathcal{F}$ of symmetric convex sets that is closed under $L_p-$Minkowski convex interpolation and that is open with respect to the Hausdorff metric. This means that for every $K, L \in \mathcal{F}$ and $\lambda \in [0,1], p\ge 0$ we have $(1-\lambda)K +_p \lambda L \in \mathcal{F}$ and that for every $K \in \mathcal{F}$ there exists $\varepsilon > 0$ such that $d(K, L) < \varepsilon$ for a symmetric convex body $L$ implies that $L \in \mathcal{F}$. 

\begin{theorem}\label{loc->glob}
Assume that (\ref{local-key-eq}) holds for some $p, q < 1$ for any $C^{2,+}$ symmetric convex $K \in \mathcal{F}$ and any even $C^1$-smooth $f:\partial K\rightarrow \R$, that is
$$\int_{\partial K} H_x f^2-\langle \mbox{\rm{II}}^{-1}\nabla_{\partial K}  f,  \nabla_{\partial K}  f\rangle +(1-p)\frac{f^2}{\langle x,n_x\rangle}d\mu-\frac{n-q}{n\mu(K)}\left(\int_{\partial K} f d\mu  \right)^2\leq 0.$$
Then, for any symmetric convex sets $K, L\in \mathcal{F}$ and $\lambda \in [0,1]$ we have \begin{align*}
    \mu((1-\lambda) K +_p \lambda L)^{\frac{q}{n}} \ge (1-\lambda) \mu(K)^{\frac{q}{n}} + \lambda \mu(L)^{\frac{q}{n}}.
\end{align*}
\end{theorem}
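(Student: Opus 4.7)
The plan is to prove concavity of the function $\phi(t) := \mu((1-t)K +_p t L)^{q/n}$ on $[0,1]$, from which the desired inequality follows immediately by evaluating at $t = \lambda$. By standard smoothing of the support functions inside the Hausdorff-open family $\mathcal{F}$ (using that $\mu$, the Wulff-shape operator $W(\cdot)$, and the $L_p$-combination are all Hausdorff-continuous), I would first reduce to the case where $K$ and $L$ are $C^{2,+}$ symmetric convex bodies in $\mathcal{F}$ and pass to the limit at the end.

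For such smooth $K$ and $L$, set $h_t := ((1-t)h_K^p + t h_L^p)^{1/p}$ and $K_t := W(h_t)$. The second-order Taylor expansion at $t_0$, namely
\[
h_{t_0+s} = h_{t_0} + s\psi_{t_0} + \tfrac{s^2}{2}(1-p)\tfrac{\psi_{t_0}^2}{h_{t_0}} + o(s^2), \qquad \psi_{t_0} := \tfrac{d}{dt}h_t\big|_{t=t_0},
\]
is exactly the expansion used in the proof of Lemma \ref{local}. Combining the first variation formula (\ref{1der}) with the second variation formula (\ref{2der}), one computes $\phi'(t_0)$ and $\phi''(t_0)$, and after algebraic rearrangement the inequality $\phi''(t_0) \leq 0$ is precisely the content of the local form (\ref{local-key-eq}) applied to $K_{t_0}$ with test function $f = \psi_{t_0}(n_x)$. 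Concavity of $\phi$ on $[0,1]$ then yields the global $(p,q)$-inequality.

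The main obstacle is that $K_{t_0}$ need not itself be $C^{2,+}$ when $p < 1$, since the Wulff shape of an $L_p$-combination of smooth support functions can still develop faces, and hence the local inequality cannot be applied directly at $K_{t_0}$. Following the method of Putterman \cite{Put}, the resolution is a secondary regularization: replace $h_{t_0}$ by a smoothed support function $h_{t_0}^\eta$ so that $W(h_{t_0}^\eta)$ is $C^{2,+}$, apply the local inequality there with the corresponding regularized variation, and pass $\eta \to 0$. This produces a discrete concavity estimate of the form $\phi(t_0+\epsilon) + \phi(t_0-\epsilon) - 2\phi(t_0) \leq o(\epsilon^2)$, which together with continuity of $\phi$ upgrades to genuine concavity on $[0,1]$. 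The delicate step, which is the main technical content of Putterman's argument and where I expect the real work to lie, is controlling the error introduced by the $\eta$-regularization uniformly enough in $t_0$ to allow passing to the limit in the integrals defining the local inequality; in particular, curvature bounds and the variational field on $W(h_{t_0}^\eta)$ must converge strongly enough to recover the inequality at $K_{t_0}$ in the limit.
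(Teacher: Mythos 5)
You have correctly identified the structure of the argument (reduce to showing concavity of $\phi(t) = \mu(K_t)^{q/n}$, relate $\phi''$ to the local inequality via the second-order Taylor expansion of $h_t$) and, crucially, you have also correctly identified the main obstruction: for $p < 1$, even when $K$ and $L$ are $C^{2,+}$, the Wulff shape $K_t = W(h_t)$ need not be $C^{2,+}$, so the local inequality cannot be applied directly at $K_t$. The problem is that everything you say after that point is a placeholder. The ``secondary $\eta$-regularization'' step is the entire substance of the theorem, and you explicitly defer it (``where I expect the real work to lie'') rather than carrying it out. The plan as stated does not explain how to control the regularization error, how to make sense of the variational field on the regularized body, or why the discrete concavity estimate survives the limit — these are exactly the things that make the statement nontrivial.

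Moreover, what you describe is not actually Putterman's method, despite the attribution. The paper (following Putterman) does not regularize $K_{t_0}$ and pass $\eta \to 0$. Instead it approximates $K$ and $L$ by \emph{strongly isomorphic polytopes}. For polytopes, $\frac{d}{d\lambda}\mu(K_\lambda)$ and $\frac{d^2}{d\lambda^2}\mu(K_\lambda)$ are computed explicitly in terms of the $(n-1)$-dimensional $\mu$-measures of the facets and the coefficients $s_i$ (Lemma 3.5 in the paper), and the local inequality is applied to $C^{2,+}$ approximations $K_\varepsilon, L_\varepsilon$ of the polytopes themselves, with a Moore–Osgood argument used to interchange the $s \to 0$ and $\varepsilon \to 0$ limits in the second-difference quotient. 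Putterman's lemma (Lemma 3.6) then says that $[0,1]$ splits into finitely many open intervals on which the isomorphism type of $K_\lambda$ is constant; on each such interval $\frac{d}{d\lambda}\left(\mu(K_\lambda)^{q/n}\right)$ is nonincreasing, and continuity of $\frac{d}{d\lambda}\mu(K_\lambda)$ across the finitely many exceptional points (where facets degenerate) upgrades this to concavity on all of $[0,1]$. General $K, L \in \mathcal{F}$ are then handled by approximation with strongly isomorphic polytopes. The polytope framework is what makes the ``breaks in regularity'' discrete and tractable; your proposal replaces this with a continuum of smoothing parameters and gives no indication of how to control them uniformly. As written, this is a genuine gap, not a detail.
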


As a prototypical example, one can take $\mathcal{F}$ to be the set of symmetric convex bodies that contain a ball of a given radius. In addition, Theorem 3.1 can also be applied in the case when $\mathcal{F}$ is simply the set of symmetric convex bodies. 




Our proof will be accomplished through approximation by strongly isomorphic polytopes. Let us recall:
\begin{definition}
Two polytopes $K$ and $L$ are said to be strongly isomorphic if \begin{align*}
\dim F(K, u) = \dim F(L, u)
\end{align*} for all $u \in S^{n-1}$, where $F(K, u)$ denotes the support set $\{x \in K: \langle x, u\rangle = h_K(u)\}$.
\end{definition}

When $K, L$ and $p$ are all assumed fixed, let us employ the notation 
$$K_{\lambda} = (1-\lambda) K +_p \lambda L.$$

We use the following lemma:
\begin{lemma}
Let $\alpha, \beta, \gamma \in [0,1]$ and $\lambda = (1-\gamma)\alpha + \gamma \beta$. If $K_{\alpha}, K_{\lambda},$ and $K_{\beta}$ are strongly isomorphic then $K_{\lambda} = (1-\gamma)K_{\alpha} +_p  \gamma K_{\beta}$.
\end{lemma}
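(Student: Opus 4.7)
The plan is to identify the two bodies by comparing the functions whose Wulff shape defines each of them. Set $\tilde h_\mu := ((1-\mu) h_K^p + \mu h_L^p)^{1/p}$ so that $K_\mu = W(\tilde h_\mu)$ for $\mu \in \{\alpha, \beta, \lambda\}$, and let $f := ((1-\gamma) h_{K_\alpha}^p + \gamma h_{K_\beta}^p)^{1/p}$, so that $(1-\gamma) K_\alpha +_p \gamma K_\beta = W(f)$. The purely algebraic identity
\[
(1-\gamma) \tilde h_\alpha^p + \gamma \tilde h_\beta^p = (1-\lambda) h_K^p + \lambda h_L^p,
\]
which is just a repackaging of $\lambda = (1-\gamma)\alpha + \gamma\beta$, will be used repeatedly.

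For the easy inclusion $(1-\gamma) K_\alpha +_p \gamma K_\beta \subseteq K_\lambda$ I would use the universal pointwise bound $h_{K_\mu} \le \tilde h_\mu$ (valid for any Wulff shape), which combined with the identity above yields $f \le \tilde h_\lambda$ pointwise. Monotonicity of Wulff shapes in the defining function then gives $W(f) \subseteq W(\tilde h_\lambda)$. This step does not require strong isomorphism.

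The reverse inclusion is where the strong isomorphism does the work. The three polytopes $K_\alpha, K_\beta, K_\lambda$ share a common normal fan with the same facet normals $u_1, \dots, u_N$. At any facet normal $u_i$ the Wulff half-space constraint is tight, so $h_{K_\mu}(u_i) = \tilde h_\mu(u_i)$, and the algebraic identity then gives $f(u_i) = h_{K_\lambda}(u_i)$. For a generic direction $u$, let $u_{i_1}, \dots, u_{i_k}$ generate the top-dimensional cone of the common normal fan containing $u$, and write $u = \sum_j c_j u_{i_j}$ with $c_j \ge 0$. By the standard fact that the support function of a strongly isomorphic polytope is linear on each cone of the shared fan with polytope-independent coefficients, $h_{K_\mu}(u) = \sum_j c_j h_{K_\mu}(u_{i_j})$ for every $\mu \in \{\alpha,\beta,\lambda\}$. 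The function $F(a,b) := ((1-\gamma) a^p + \gamma b^p)^{1/p}$ is positively $1$-homogeneous and concave on the positive orthant for every $p \in [0,1]$ (the $p=0$ case being the weighted geometric mean $a^{1-\gamma} b^\gamma$), hence superadditive, and so
\[
f(u) = F(h_{K_\alpha}(u), h_{K_\beta}(u)) \ge \sum_j c_j F(h_{K_\alpha}(u_{i_j}), h_{K_\beta}(u_{i_j})) = \sum_j c_j h_{K_\lambda}(u_{i_j}) = h_{K_\lambda}(u).
\]
Hence $h_{K_\lambda} \le f$ everywhere, giving $K_\lambda \subseteq W(f) = (1-\gamma) K_\alpha +_p \gamma K_\beta$ and closing the equality.

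The only ingredient beyond algebra I expect to need is the linear-on-cones description of the support function of a strongly isomorphic polytope in terms of its facet-normal support values; everything else is the above algebraic identity together with the elementary concavity and homogeneity of $F$. The resulting argument is uniform in $p \in [0,1]$ and never uses the pair $(K,L)$ beyond what is already encoded in the $K_\mu$.
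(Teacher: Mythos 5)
Your proof is correct, and it is the same computation at its core: evaluate the Wulff-shape defining functions at the common facet normals and exploit the algebraic identity $(1-\gamma)\tilde h_\alpha^p + \gamma\tilde h_\beta^p = (1-\lambda)h_K^p + \lambda h_L^p$. The paper stops after verifying this identity at the $u_i$, leaving implicit why agreement on the facet normals determines the Wulff shapes; you supply the missing details as two explicit inclusions. The ``easy'' inclusion $(1-\gamma)K_\alpha +_p \gamma K_\beta \subseteq K_\lambda$ from $h_{K_\mu}\le\tilde h_\mu$ is the direction the paper could also be read as using. The reverse inclusion via linearity of the support function on the cones of the shared normal fan, combined with the $1$-homogeneity and concavity (hence superadditivity) of $F(a,b)=((1-\gamma)a^p+\gamma b^p)^{1/p}$ on the positive orthant, is what the paper treats as understood; your argument makes it precise and uniform in $p\in[0,1]$, including the log case $p=0$. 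One minor detail to keep in mind: at a non-simple vertex of the common normal fan the coefficients $c_j$ in $u=\sum_j c_j u_{i_j}$ are not unique, but the identity $h_{K_\mu}(u)=\sum_j c_j h_{K_\mu}(u_{i_j})$ still holds for any valid nonnegative conic decomposition, since both sides equal $\langle v,u\rangle$ for the vertex $v$ dual to the cone, so your argument applies verbatim without assuming simplicity.
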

\begin{proof}
Let $u_1, ..., u_N$ be the facet normals of $K_{\alpha}, K_{\lambda},$ and $K_{\beta}$. We may write 
\begin{align*}
K_{\lambda} &= \{x : \langle x, u_i \rangle \le ( (1-\lambda)h_K +_p \lambda h_L)(u_i), i = 1, ..., N\}.
\end{align*} 
Since each $u_i$ is also a facet normal of $K_{\alpha}$ and $K_{\beta}$, we have 
$$h_{K_{\alpha}}(u_i) = ((1-\alpha)h_K+_p \alpha h_L)(u_i)$$ 
and 
$$h_{K_{\beta}}(u_i) = ((1-\beta)h_K +_p \beta h_L)(u_i).$$ 

Therefore, \begin{align*}
((1-\gamma)h_{K_{\alpha}} +_p \gamma h_{K_{\beta}})(u_i) &= ((1-\gamma)((1-\alpha)h_K+_p \alpha h_L) +_p \gamma((1-\beta)h_K +_p \beta h_L))(u_i) \\ &= ((1-\lambda)h_K +_p \lambda h_L)(u_i),
\end{align*} and so our proof is concluded.
\end{proof}

A further ingredient we need is a weak-convergence result for the surface area measure of a convex body with respect to $\mu$. 
\begin{lemma}\label{hausdorffsurfacearea}
Let $K, L$ be convex bodies in $\mathbb{R}^n$ within Hausdorff distance $\varepsilon$ from each other, $\varepsilon>0$. Then for every bounded function $a(u)$, \begin{align*}
    \left|\int_{S^{n-1}}a(u)d\sigma_{\mu, K}(u) - \int_{S^{n-1}}a(u)d\sigma_{\mu, L}(u) \right| \le C \varepsilon,
\end{align*} where the constant $C>0$ depends on $\|a\|_{\infty}, g, \max_{x \in K}|x|, \max_{x\in L}|x|.$ 
\end{lemma}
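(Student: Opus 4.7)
The plan is to reduce to strongly isomorphic polytopes via simultaneous approximation, and then exploit the atomic form \eqref{psurf} of the surface area measure on polytopes.

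By a classical approximation result in convex geometry (see, e.g., Schneider, \emph{Convex Bodies: The Brunn--Minkowski Theory}, Theorem~2.4.15), for any $\delta>0$ one can find polytopes $P$ and $Q$, strongly isomorphic to each other with a common set of facet normals $u_1,\dots,u_N$, such that $d_H(K,P)<\delta$ and $d_H(L,Q)<\delta$; in particular $d_H(P,Q)\le\varepsilon+2\delta$. Plugging into \eqref{psurf}, the task reduces to bounding
$$\sum_{i=1}^{N}\bigl|\mu_{n-1}(F_i(P))-\mu_{n-1}(F_i(Q))\bigr|$$
by a multiple of $\varepsilon+2\delta$. Since $P$ and $Q$ are strongly isomorphic, the facets $F_i(P)$ and $F_i(Q)$ are combinatorially identical polytopes lying in parallel hyperplanes at distance $|h_P(u_i)-h_Q(u_i)|\le\varepsilon+2\delta$, and their vertices depend linearly on the support values $h_P(u_j)$ (resp.\ $h_Q(u_j)$). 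Translating $F_i(Q)$ into the hyperplane of $F_i(P)$, comparing the resulting $(n-1)$-dimensional polytopal measures, and then passing to $\mu_{n-1}$ via the uniform bound on $g$ over $B(0,R)$ with $R=\max(\max_{x\in K}|x|,\max_{x\in L}|x|)$, yields a facet-by-facet estimate $|\mu_{n-1}(F_i(P))-\mu_{n-1}(F_i(Q))|\le C_i(\varepsilon+2\delta)$.

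Finally, letting $\delta\to 0$ and invoking weak continuity of the surface area measure under Hausdorff convergence of convex bodies concludes the proof.

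The main obstacle is the uniform bound $\sum_i C_i\le C(g,R)$, independent of the combinatorial complexity of $P$ and $Q$: a naive estimate of $C_i$ via facet perimeters fails, since the total $(n-2)$-skeleton content of a polytope blows up as the approximation is refined (e.g., approximating a ball by polytopes with many small facets). This can be circumvented either by a careful choice of approximating polytopes that keeps the skeleton measure controlled by the $\mu$-weighted surface areas of $\partial K$ and $\partial L$ (themselves bounded in terms of $R$ and $\|g\|_\infty$), or, in the intended symmetric setting where $0\in K\cap L$, by bypassing polytopes entirely: one defines a radial Lipschitz homeomorphism $\phi:\partial K\to\partial L$, $\phi(x)=(\rho_L(x/|x|)/\rho_K(x/|x|))\,x$, and applies the area formula, reducing the comparison to Lipschitz bounds on $\phi$, on the pullback Gauss map, and on $g$, all of which follow from $d_H(K,L)\le\varepsilon$ together with the ambient bounds.
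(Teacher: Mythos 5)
The paper does not actually prove this lemma; it only records the two-sentence sketch that your opening paragraph spells out (approximate $K,L$ by strongly isomorphic polytopes, compare corresponding faces, use that $g$ is Lipschitz on compacts), so your plan coincides with the paper's. More importantly, the obstacle you identify is genuine and in fact shows that the lemma, \emph{as literally stated for arbitrary bounded $a$}, is false. The sum $\sum_i \bigl|\mu_{n-1}(F_i(P))-\mu_{n-1}(F_i(Q))\bigr|$ is the total-variation distance of the two weighted surface area measures, and this cannot be $O(\varepsilon)$ with a constant depending only on $R$ and $g$: take $K$ a polytope and $L$ a small rotation of $K$; both lie in the same ball, $d_H(K,L)$ is as small as one wishes, yet the two surface area measures are mutually singular atomic measures, so choosing $a$ to be the indicator of a single facet normal of $K$ (bounded, $\|a\|_\infty=1$) makes the left side bounded below uniformly in $\varepsilon$. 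The statement can only hold --- and is only used in the paper --- for $a$ Lipschitz, with $C$ depending additionally on $\|a\|_{\mathrm{Lip}}$; with that restriction the facet-by-facet argument must exploit the cancellation inherent in weak comparison of area measures (Steiner/local-parallel-set formula, or mixed volumes) rather than any total-variation bound, with the Lipschitz weight $g$ inserted via the translation estimate you already wrote, and the resulting error is controlled by $H_{n-1}(\partial P)\lesssim R^{n-1}$ and $\|a\|_{\mathrm{Lip}}$ rather than by any $(n-2)$-skeleton content. So your diagnosis of the gap is correct, but your ``Fix~1'' (choosing approximating polytopes with bounded skeleton content) is not available, since such approximations of, e.g., a ball do not exist.

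Your ``Fix~2'' has a separate problem: the Gauss map of a general convex body is not Lipschitz (it jumps across the lower-dimensional skeleton of a polytope), and the pointwise distance $|n^K_x - n^L_{\phi(x)}|$ can be of order~$1$ on a positive-measure subset of $\partial K$ even when $d_H(K,L)\le\varepsilon$ --- same polytope-versus-slight-rotation example. The comparison of normal fields holds only in an averaged ($L^1$) sense, which is another way of seeing that some uniform continuity of $a$ is unavoidable in the statement.
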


In the case of Lebesgue measure, this lemma is implicit in results proved in Schneider \cite{book4}. For the general case, one can argue by approximating $K, L$ by strongly isomorphic polytopes. To compare the $(n-1)-$dimensional $\mu$ measure of corresponding faces, one uses the fact that $g$ is Lipschitz on compact sets.


\begin{lemma}
Assume that (\ref{local-key-eq}) holds for some $p, q < 1$ for any $C^{2,+}$ symmetric convex $K \in \mathcal{F}$ and any even $C^1$-smooth $f:\partial K\rightarrow \R$. Then we have the following statement: For any two strongly isomorphic symmetric polytopes $K, L \in \mathcal{F}$, and for any $\lambda \in [0,1]$ such that there exists a (possibly one-sided) neighborhood $U$ of $\lambda$ for which all the $\{ K_{\lambda'}: \lambda' \in U\}$ are strongly isomorphic to one another, we have $\frac{d^2}{d\lambda^2}\left( \mu(K_{\lambda})^{\frac{q}{n}}\right)|_{\lambda} \le 0$.
\end{lemma}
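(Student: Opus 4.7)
The plan is to reduce the polytope case to the established $C^{2,+}$ setting by smoothing both $K$ and $L$, applying the hypothesis (\ref{local-key-eq}) all along the smooth $L_p$-interpolation path, and invoking stability of concavity under uniform limits. By the strong-isomorphism assumption on $U$, the facet combinatorics of $K_{\lambda'}$ is frozen, so $\lambda' \mapsto \mu(K_{\lambda'})$ is classically $C^2$ near $\lambda$, and the second derivative in the conclusion is well defined.

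Using the openness of $\mathcal{F}$ in the Hausdorff metric, choose sequences of $C^{2,+}$ symmetric strictly convex bodies $K^{(m)},L^{(m)}\in\mathcal{F}$ with $K^{(m)}\to K$ and $L^{(m)}\to L$ in Hausdorff metric (for instance, $K+m^{-1}E$ and $L+m^{-1}E$ for a fixed symmetric $C^{2,+}$ strictly convex body $E$, followed by a mild boundary mollification). Set
\begin{equation*}
K^{(m)}_{\lambda'} := (1-\lambda')K^{(m)} +_p \lambda' L^{(m)},
\end{equation*}
which lies in $\mathcal{F}$ (by its closure under $L_p$-interpolation) for every $\lambda'\in[0,1]$. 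Applying the hypothesis at each $K^{(m)}_{\lambda'}$ with the infinitesimal perturbation that propagates along the path (namely, perturbing $h^p$ by $s(h_{L^{(m)}}^p - h_{K^{(m)}}^p)$ at $s=0$, which matches $K^{(m)}_{\lambda'+s}$ to first order, as in the Taylor expansion of Lemma \ref{local}) yields
\begin{equation*}
\frac{d^2}{d\lambda'^2}\,\mu\bigl(K^{(m)}_{\lambda'}\bigr)^{q/n}\le 0\quad\text{for every }\lambda'\in[0,1],
\end{equation*}
so $F_m(\lambda'):=\mu(K^{(m)}_{\lambda'})^{q/n}$ is concave on $[0,1]$.

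The passage to the limit is then direct: Hausdorff convergence of $K^{(m)}$ and $L^{(m)}$ implies $K^{(m)}_{\lambda'}\to K_{\lambda'}$ in Hausdorff metric uniformly in $\lambda'\in[0,1]$, hence $F_m\to F$ uniformly on $[0,1]$ where $F(\lambda'):=\mu(K_{\lambda'})^{q/n}$. Since the uniform limit of concave functions is concave, $F$ is concave on $[0,1]$, and combined with the $C^2$ regularity of $F$ near $\lambda$ this gives $F''(\lambda)\le 0$, which is the required inequality. The principal technical obstacle is the step of applying the hypothesis along the whole smooth family $K^{(m)}_{\lambda'}$: one must verify that the $L_p$-interpolant of two $C^{2,+}$ bodies is itself $C^{2,+}$ (and if not, regularize it by Minkowski-adding a small smooth ball at each $\lambda'$), and that the $L_p$-infinitesimal perturbation $h^p_{L^{(m)}}-h^p_{K^{(m)}}$, which can change sign on $S^{n-1}$, is admissible in the Taylor expansion of Lemma \ref{local}, either by treating it as a perturbation of $h^p$ directly or by a signed decomposition into positive and negative parts.
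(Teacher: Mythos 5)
There is a genuine gap, and it sits exactly where you flag a ``principal technical obstacle'' and then wave it away. Your argument hinges on the claim that for each fixed $m$, the map $\lambda'\mapsto F_m(\lambda'):=\mu(K^{(m)}_{\lambda'})^{q/n}$ is concave on the entire interval $[0,1]$, obtained by applying the infinitesimal hypothesis at every $K^{(m)}_{\lambda'}$. This requires, for every $\lambda'\in[0,1]$, that $K^{(m)}_{\lambda'}$ is a $C^{2,+}$ body and, more importantly, that the function $((1-\lambda')h_{K^{(m)}}^p + \lambda' h_{L^{(m)}}^p)^{1/p}$ is actually the support function of $K^{(m)}_{\lambda'}$, so that the derivative of $\mu(K^{(m)}_{\lambda'})$ in $\lambda'$ is the boundary integral you need to feed into (\ref{local-key-eq}). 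For $p<1$ neither of these is automatic, even starting from $C^{2,+}$ bodies: the $L_p$-combination of two support functions need not be a support function, so the Wulff shape $K^{(m)}_{\lambda'}$ can have support function strictly below the $L_p$-formula on a set of positive measure, the boundary can fail to be $C^{2,+}$, and the naive differentiation of $\mu$ along the path breaks down. This is precisely the obstruction that forces the polytope detour (in this paper and in Putterman's). Your proposed repair --- Minkowski-adding a small smooth ball at each $\lambda'$ --- destroys the $L_p$-interpolation structure, so the concavity you would then obtain is for the wrong family of bodies.

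The paper's proof avoids the whole issue by working directly with strongly isomorphic polytopes: Lemma 3.3 shows the strong-isomorphism class is an exact replacement for smoothness along the path (on facet normals the Wulff shape really does have the $L_p$-combination as its support value), the first and second derivatives of $\mu(K_\lambda)$ are computed explicitly from the facet data in (\ref{firstder}) and the display after it, and the infinitesimal hypothesis is applied not along the path but at $\lambda=0$ only, via a sequence of $C^{2,+}$ approximants $K_\varepsilon,L_\varepsilon$ and the test function $f_\varepsilon$ of (\ref{wepsilon}). The analytical content is then entirely in justifying the limit $\varepsilon\to 0$ of the second-order term, which the paper does via Lemma \ref{hausdorffsurfacearea} and a Moore--Osgood interchange of limits. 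Your proposal replaces this one-point local-to-global step with an attempted all-of-$[0,1]$ local-to-global step for smooth bodies, which is stronger than what is needed and is exactly what fails for $p<1$. To make your route work you would have to prove that $L_p$-interpolation preserves $C^{2,+}$ strict convexity and the support-function identity along the whole path, which is false in general.
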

\begin{proof}
By Lemma 3.3 and the fact that $\mathcal{F}$ is closed under Minkowski interpolation, we may reduce our problem to showing that if $K, L$ are strongly isomorphic polytopes such that $K_{\lambda}$ is strongly isomorphic to $K$ and $L$ for all $\lambda \in [0,1]$ then we have $\frac{d^2}{d\lambda^2}(\mu(K_{\lambda})^{\frac{q}{n}}) \big|_{\lambda=0} \le 0$. 

Let $u_1, ..., u_N$ denote the set of outer normals to $K$ (and all $K_{\lambda}$) and let $F_i(K_{\lambda})$ denote the face of $K_{\lambda}$ with outer normal $u_i$.

Let $h_i = h_K(u_i)$. We choose $s_i$ such that $h_L(u_i) = h_i(1+ps_i)^{\frac{1}{p}}$ and define \begin{align*}
    &a_i(\lambda) = (1+\lambda p s_i)^{\frac{1}{p}}, \\
    &b_i(\lambda) = (1+\lambda p s_i)^{\frac{1-p}{p}}, \\
    &c_i(\lambda) = (1+\lambda p s_i)^{\frac{1-2p}{p}}.
\end{align*} Observe that $h_{K_{\lambda}}(u_i) = h_i a_i(\lambda)$. Since $K_{\lambda}$ is strongly isomorphic to $K$, its surface area measure is given by $\sum_{i=1}^{N} \mu_{n-1}(F_i(K_{\lambda})) \delta_{u_i}$ by (15). 

We have \begin{align}\label{firstder}\begin{split}
    \frac{d}{d\lambda} \mu(K_{\lambda}) &= \lim_{\varepsilon \to 0} \frac{1}{\varepsilon}\left(\mu(K_{\lambda+\varepsilon}) - \mu(K_{\lambda})\right) \\ &= \lim_{\varepsilon \to 0} \frac{1}{\varepsilon} \sum_{i=1}^{N} \int_{h_i a_i(\lambda)}^{h_i a_i(\lambda + \varepsilon)} \mu_{n-1}(F_i(K_{(h_ia_i)^{-1}(\alpha)})) d\alpha \\ &= \sum_{i=1}^{N} \frac{d}{d\lambda}(h_ia_i(\lambda)) \mu_{n-1}(F_i(K_{\lambda})) \\ &= \sum_{i=1}^{N} h_i s_i b_i(\lambda) \mu_{n-1}(F_i(K_{\lambda})).
    \end{split}
\end{align} Hence, \begin{align*}
    \frac{d}{d\lambda}\mu(K_{\lambda})\big|_{\lambda=0} &= \sum_{i=1}^{N} h_i s_i \mu_{n-1}(F_i(K)).
\end{align*} We also compute, by the product rule applied to (\ref{firstder}), \begin{align*}
    \frac{d^2}{d\lambda^2} \mu(K_{\lambda}) &= (1-p)\sum_{i=1}^{N} s_i^2 h_i c_i(\lambda) \mu_{n-1}(F_i(K_{\lambda})) + \sum_{i=1}^{N}h_i s_i b_i(\lambda) \frac{d}{d\lambda} \mu_{n-1}(F_i(K_{\lambda})),
\end{align*} and so \begin{align*}
    \frac{d^2}{d\lambda^2} \mu(K_{\lambda})\big|_{\lambda = 0} &= (1-p)\sum_{i=1}^{N}s_i^2 h_i \mu_{n-1}(F_i(K)) + \sum_{i=1}^{N} h_i s_i \frac{d}{d\lambda} \mu_{n-1}(F_i(K_{\lambda}))\big|_{\lambda = 0}.
\end{align*} The claim that $\frac{d^2}{d\lambda^2}\left(\mu(K_{\lambda})^{\frac{q}{n}}\right) \le 0$ is equivalent to the claim that 
$$\frac{d^2}{d\lambda^2}\mu(K_{\lambda}) \big|_{\lambda = 0} - \frac{n-q}{n\mu(K)}\left(\frac{d}{d\lambda} \mu(K_{\lambda})\big|_{\lambda = 0} \right)^2 \le 0.$$ 
Using the above expressions, we thus wish to demonstrate that 
$$
(1-p)\sum_{i=1}^{N}s_i^2 h_i \mu_{n-1}(F_i(K)) + \sum_{i=1}^{N} h_i s_i \frac{d}{d\lambda} \mu_{n-1}(F_i(K_{\lambda}))\big|_{\lambda = 0}$$$$\le \frac{n-q}{n\mu(K)}\left(\sum_{i=1}^{N}h_i s_i \mu_{n-1}(F_i(K)) \right)^2.
$$

Let $K_{\varepsilon}, L_{\varepsilon} \in C^{2,+}_{e}$ be approximations to $K, L$ respectively such that 
$$d(K_{\varepsilon}, K), d(L_{\varepsilon}, L) < \varepsilon$$ 
in the Hausdorff metric. For sufficiently small $\varepsilon$, we have $K_{\varepsilon}, L_{\varepsilon} \in \mathcal{F}$. Therefore the inequality (\ref{local-key-eq}) yields
\begin{align}\label{approxlocal}
(1-p)\int_{\partial K_{\varepsilon}} \frac{f^2}{\langle x, n_x\rangle} d\mu_{\partial K_{\varepsilon}}(x) + \int_{\partial K_{\varepsilon}}H_x f^2-\langle \mbox{\rm{II}}^{-1}\nabla_{\partial K_{\varepsilon}}  f,  \nabla_{\partial K_{\varepsilon}}  f\rangle d\mu_{\partial K_{\varepsilon}} &\le 
\end{align} 
$$\frac{n-q}{n\mu(K_{\varepsilon})}\left(\int_{\partial K_{\varepsilon}} f d\mu_{\partial K_{\varepsilon}} \right)^2$$
for any $C^1-$smooth $f: \partial K_{\varepsilon} \to \mathbb{R}$. Let us take $f_{\varepsilon}(x) = w_{\varepsilon}(n_x)$ where \begin{align}\label{wepsilon} w_{\varepsilon}(u) = \frac{1}{p}h_{K_{\varepsilon}}(u) \left(\left(\frac{h_{L_{\varepsilon}}(u)}{h_{K_{\varepsilon}}(u)} \right)^p - 1 \right).\end{align} Note that $w_0(u_i) = h_i s_i.$ By Lemma 3.4, the fact that $w_{\varepsilon} \to w$ uniformly on $S^{n-1}$, we have \begin{align*}
    \int_{\partial K_{\varepsilon}} f_{\varepsilon} d\mu_{\partial K_{\varepsilon}} &= \int_{S^{n-1}}w_{\varepsilon}(\theta) d\sigma_{\mu, {K_{\varepsilon}}}(\theta) \\ &\to \int_{S^{n-1}}w(\theta) d\sigma_{\mu, K}(\theta) \\
    &= \sum_{i=1}^{N}h_i s_i \mu_{n-1}(F_i(K)).
\end{align*} Similarly, \begin{align*}
\int_{\partial K_{\varepsilon}} \frac{f_{\varepsilon}^2}{\langle x, n_x\rangle} d\mu_{\partial K_{\varepsilon}}(x) &= \int_{S^{n-1}} \frac{w_{\varepsilon}^2(\theta)}{h_{K_{\varepsilon}}(\theta)} d\sigma_{\mu, K_{\varepsilon}}(\theta) \\ &\to \int_{S^{n-1}}\frac{w^2(\theta)}{h_K(\theta)} d\sigma_{\mu, K}(\theta)
\\ &= \sum_{i=1}^{N}s_i^2 h_i \mu_{n-1}(F_i(K)).
\end{align*} Furthermore, by the second derivative formula (\ref{2der}): \begin{align*}
    \int_{\partial K_{\varepsilon}} H_x f_{\varepsilon}^2 - \langle \mbox{\rm{II}}^{-1} \nabla_{\partial K_{\varepsilon}}f_{\varepsilon}, \nabla_{\partial K_{\varepsilon}} f_{\varepsilon} \rangle d\mu_{\partial K_{\varepsilon}} &= \frac{d^2}{ds^2}\mu(W(h_{K_{\varepsilon}} + s w_{\varepsilon}))\big|_{s=0}.
\end{align*} 

It therefore remains to show that\begin{align}\label{secderivlimit}
\lim_{\varepsilon \to 0} \frac{d^2}{ds^2} \mu(W(h_{K_{\varepsilon}} + sw_{\varepsilon}))\big|_{s=0} &= \sum_{i=1}^{N}h_i s_i \frac{d}{d\lambda} \mu_{n-1}(F_i(K_{\lambda})\big|_{\lambda = 0}.
\end{align}

By Lemma 11.1, we know that \begin{align*}
    \frac{d}{ds} \mu(W(h_{K_{\varepsilon}}+sw_{\varepsilon}))\big|_{s=s_0} &= \int_{S^{n-1}} w_{\varepsilon}(\theta) d\sigma_{\mu, W(h_{K_{\varepsilon}} + s_0 w_{\varepsilon})}(\theta).
\end{align*}

Therefore \begin{align*}
    &\frac{d^2}{ds^2}\mu(W(h_{K_{\varepsilon}} + sw_{\varepsilon}))\big|_{s=0} \\ &= \lim_{s\to 0}\frac{1}{s}\left( \int_{S^{n-1}} w_{\varepsilon}(\theta)g(\nu_{W(h_{K_{\varepsilon}} + s w_{\varepsilon})}^{-1}(\theta)) d\sigma_{W(h_{K_{\varepsilon}} + sw_{\varepsilon})}(\theta) - \int_{S^{n-1}}w_{\varepsilon}(\theta) g(\nu_{K_{\varepsilon}}^{-1}(\theta)) d\sigma_{K_{\varepsilon}}(\theta) \right),
\end{align*} and so \begin{align*}
    &\lim_{\varepsilon \to 0} \frac{d^2}{ds^2} \mu(W(h_{K_{\varepsilon}} + sw_{\varepsilon}))|_{s= 0} \\ &= \lim_{\varepsilon\to 0}\lim_{s \to 0}\frac{1}{s}\left( \int_{S^{n-1}} w_{\varepsilon}(\theta)g(\nu_{W(h_{K_{\varepsilon}} + s w_{\varepsilon})}^{-1}(\theta)) d\sigma_{W(h_{K_{\varepsilon}} + sw_{\varepsilon})}(\theta) - \int_{S^{n-1}}w_{\varepsilon}(\theta) g(\nu_{K_{\varepsilon}}^{-1}(\theta)) d\sigma_{K_{\varepsilon}}(\theta) \right).
\end{align*} We denote the limiting expression by $\Pi(s,\varepsilon)$.

We now show that we can interchange the limits in $s$ and $\varepsilon$. This will rely on the Moore-Osgood theorem. It is clear by Hausdorff continuity and Lemma \ref{hausdorffsurfacearea} that $\lim_{\varepsilon \to 0} \Pi(s,\varepsilon)$ always exists for $s\neq 0$. Therefore, it must be shown that $\lim_{s\to 0} \Pi(s,\varepsilon)$ is uniform for $\varepsilon \neq 0$.

We write $\Pi(s,\varepsilon) = \Pi_1(s,\varepsilon) + \Pi_2(s,\varepsilon)$,
 where \begin{align*}
    \Pi_1(s,\varepsilon) &= \frac{1}{s} \int_{S^{n-1}}w_{\varepsilon}(\theta) (g(\nu_{W(h_{K_{\varepsilon}}+sw_{\varepsilon})}^{-1}(\theta))-g(\nu_{K_{\varepsilon}}^{-1}(\theta))) d\sigma_{W(h_{K_{\varepsilon}} + sw_{\varepsilon})}(\theta)
\end{align*} and \begin{align*}
    \Pi_2(s,\varepsilon) &= \frac{1}{s} \int_{S^{n-1}}w_{\varepsilon}(\theta)g(\nu_{K_{\varepsilon}}^{-1}(\theta)) (d\sigma_{W(h_{K_{\varepsilon}} + sw_{\varepsilon})} - d\sigma_{K_{\varepsilon}})(\theta).
\end{align*}

We first show that $\lim_{s\to 0}\Pi_1(s,\varepsilon)$ is uniform for $\varepsilon \neq 0$.

To achieve this, it suffices to show that, for any fixed $\theta \in S^{n-1}$, \begin{align*}
    \lim_{s\to 0}\frac{1}{s}(g(\nu_{W(h_{K_{\varepsilon}}+sw_{\varepsilon})}^{-1}(\theta))-g(\nu_{K_{\varepsilon}}^{-1}(\theta)))
    \end{align*} is uniform for $\varepsilon \neq 0$. Now, \begin{align*}
    \nu_{W(h_{K_{\varepsilon}}+sw_{\varepsilon})}^{-1}(\theta) &= \nabla (h_{K_{\varepsilon}} + sw_{\varepsilon})(\theta) \\ &= \nabla h_{K_{\varepsilon}}(\theta) + s \nabla w_{\varepsilon}(\theta) \\ &= \nu_{K_{\varepsilon}}^{-1}(\theta) + s\nabla w_{\varepsilon}(\theta),
\end{align*} where we have used the fact that $C^2_{h,e}(S^{n-1})$, the space of even $C^2-$support functions, is open in $C^2_{e}(S^{n-1})$, the space of even $C^2-$functions on the sphere, and moreover that $\nu_{K}^{-1} = \nabla h_K$ for $C^{2,+}$ convex bodies $K$. 
Therefore, \begin{align*}
\lim_{s\to 0}\frac{1}{s}(g(\nu_{W(h_{K_{\varepsilon}}+sw_{\varepsilon})}^{-1}(\theta))-g(\nu_{K_{\varepsilon}}^{-1}(\theta))) &= D_{\nabla w_{\varepsilon}(\theta)}g(\nu_{K_{\varepsilon}}^{-1}(\theta))  
\end{align*} and for every $s>0$, there exists $t(s) \in [0,s]$ such that \begin{align*}
    \frac{1}{s}(g(\nu_{W(h_{K_{\varepsilon}}+sw_{\varepsilon})}^{-1}(\theta))-g(\nu_{K_{\varepsilon}}^{-1}(\theta))) &= D_{\nabla w_{\varepsilon}(\theta)}g(\nu_{K_{\varepsilon}^{-1}}(\theta) + t(s) \nabla w_{\varepsilon}(\theta)).
\end{align*} Thus, \begin{align}\begin{split}\label{uniflimit}
    &\left|\frac{1}{s}(g(\nu_{W(h_{K_{\varepsilon}}+sw_{\varepsilon})}^{-1}(\theta))-g(\nu_{K_{\varepsilon}}^{-1}(\theta))) - D_{\nabla w_{\varepsilon}(\theta)} g(\nu_{K_{\varepsilon}}^{-1}(\theta)) \right| \\ &= \left|D_{\nabla w_{\varepsilon}(\theta)}g(\nu_{K_{\varepsilon}}^{-1}(\theta) + t(s) \nabla w_{\varepsilon}(\theta)) - D_{\nabla w_{\varepsilon}(\theta)} g(\nu_{K_{\varepsilon}}^{-1}(\theta)) \right| \\ &\le \left|Dg(\nu_{K_{\varepsilon}}^{-1}(\theta) + t(s) \nabla w_{\varepsilon}(\theta)) - Dg(\nu_{K_{\varepsilon}}^{-1}(\theta)) \right| \left|\nabla w_{\varepsilon}(\theta)\right|.
\end{split}
\end{align} Recalling the definition of $w_{\varepsilon}$ (\ref{wepsilon}), we see that $\nabla w_{\varepsilon}$ can be bounded in terms of $h_{K_{\varepsilon}}, h_{L_{\varepsilon}}, \nabla h_{K_{\varepsilon}}, \nabla h_{L_{\varepsilon}}$. As $h_{K_{\varepsilon}} \le 2 \sup_{x \in K}|x|, h_{L_{\varepsilon}} \le 2\sup_{x \in L}|x|$ and $\nabla h_{K_{\varepsilon}} \in \partial K_{\varepsilon} \subset 2K, \nabla h_{L_{\varepsilon}} \in \partial L_{\varepsilon} \subset 2L$, it follows that there exists a uniform bound (independent of $\varepsilon$) such that \begin{align}\label{unifbound}
    |\nabla w_{\varepsilon}(\theta)| &\lesssim 1.
\end{align} Combining (\ref{uniflimit}), (\ref{unifbound}), and the fact that $Dg$ is uniformly continuous on a compact set, it follows that our desired limit is indeed uniform for $\varepsilon \neq 0$. 

We now prove that $\lim_{s\to 0}\Pi_2(s,\varepsilon)$ is uniform for $\varepsilon \neq 0$. Observe that \begin{align*}
 \frac{d\sigma_{W(h_{K_{\varepsilon}}+sw_{\varepsilon})} - d\sigma_{K_{\varepsilon}}}{s} \to d{\sigma}_{\varepsilon},
\end{align*} where $d{\sigma}_{\varepsilon}$ is some second surface area measure on the sphere. 

Therefore, \begin{align*}
    \lim_{s\to 0}\Pi_2(s,\varepsilon) &= \int_{S^{n-1}}w_{\varepsilon}(\theta) g(\nu_{K_{\varepsilon}}^{-1}(\theta)) d\sigma_{\varepsilon}(\theta).
\end{align*} To see that the convergence is uniform for $\varepsilon \neq 0$, we write \begin{align*}
    &\left|\frac{1}{s}\int_{S^{n-1}}w_{\varepsilon}(\theta)g(\nu_{K_{\varepsilon}}^{-1}(\theta))\left(d\sigma_{W(h_{K_{\varepsilon}}+sw_{\varepsilon})} - d\sigma_{K_{\varepsilon}} \right)(\theta) - \int_{S^{n-1}} w_{\varepsilon}(\theta) g(\nu_{K_{\varepsilon}}^{-1}(\theta)) d\sigma_{\varepsilon}(\theta) \right| \\ &\le \norm{w_{\varepsilon}}_{\infty} \norm{g}_{\infty} \left|\frac{d\sigma_{W(h_{K_{\varepsilon}} + sw_{\varepsilon})} - d\sigma_{K_{\varepsilon}} }{s} - d\sigma_{\varepsilon} \right|(S^{n-1}) \\ &\lesssim \left|\frac{d\sigma_{W(h_{K_{\varepsilon}} + sw_{\varepsilon})} - d\sigma_{K_{\varepsilon}} }{s} - d\sigma_{\varepsilon} \right|(S^{n-1}) \\ &\lesssim O_{K,L}(s),
\end{align*} where the last step is simply a consequence of the Steiner-type formula for surface area measures. See Section 4.2 in Schneider \cite{book4}.

Therefore, indeed \begin{align*}
&\lim_{\varepsilon \to 0} \frac{d^2}{ds^2} \mu(W(h_{K_{\varepsilon}} + sw_{\varepsilon}))|_{s= 0} \\ &=
    \lim_{\varepsilon \to 0} \lim_{s\to 0} \frac{1}{s}\left( \frac{d}{ds}\mu(W(h_{K_{\varepsilon}} + sw_{\varepsilon})\big|_{s} - \frac{d}{ds}\mu(W(h_{K_{\varepsilon}}+sw_{\varepsilon}))|_{s=0}\right) \\ &= \lim_{s\to 0}\lim_{\varepsilon \to 0} \frac{1}{s}\left( \frac{d}{ds}\mu(W(h_{K_{\varepsilon}} + sw_{\varepsilon})\big|_{s} - \frac{d}{ds}\mu(W(h_{K_{\varepsilon}}+sw_{\varepsilon}))|_{s=0}\right) \\ &= \lim_{s\to 0}\lim_{\varepsilon \to 0} \frac{1}{s}\left(\int_{S^{n-1}} w_{\varepsilon}(\theta) d\sigma_{\mu, W(h_{K_{\varepsilon}} + s w_{\varepsilon})}(\theta) -\int_{S^{n-1}}w_{\varepsilon}(\theta)\sigma_{\mu, K_{\varepsilon}}(\theta) \right)\\ &= \lim_{s\to 0} \frac{1}{s}\left(\int_{S^{n-1}} w(\theta) d\sigma_{\mu, W(h_K + sw)}(\theta) - \int_{S^{n-1}}w(\theta) d\sigma_{\mu, K}(\theta) \right).
\end{align*} Since $((1-s)h_K +_p sh_L) = h_K + sw + O(s^2)$, it follows from Lemma \ref{hausdorffsurfacearea} that \begin{align*}
    \int_{S^{n-1}} w(\theta) d\sigma_{\mu, W(h_K+sw)}(\theta) &= \int_{S^{n-1}} w(\theta) d\sigma_{\mu, W((1-s)h_K +_p sh_L)}(\theta) + O(s^2).
\end{align*} Therefore, \begin{align*}
    \lim_{\varepsilon \to 0}\frac{d^2}{ds^2} \mu(W(h_{K_{\varepsilon} + sw_{\varepsilon}}))|_{s=0} &= \lim_{s\to 0}\frac{1}{s}\left(\int_{S^{n-1}}w(\theta) d\sigma_{\mu, K_s}(\theta) - \int_{S^{n-1}}w(\theta) d\sigma_{\mu, K}(\theta) + O(s^2) \right) \\ &= \frac{d}{ds} \int_{S^{n-1}}w(\theta) d\sigma_{\mu, K_s}(\theta)\big|_{s=0} \\ &= \frac{d}{ds} \sum_{i=1}^{N} w(u_i)\mu_{n-1}(F_i(K_s))\big|_{s=0} \\ &= \sum_{i=1}^{N}h_i s_i \frac{d}{ds} \mu_{n-1}(F_i(K_{s}))\big|_{s=0}.
\end{align*}This concludes the proof of (\ref{secderivlimit}) and the lemma.
\end{proof}

We will use as an important ingredient the following fact proven in Proposition 3.6 of Putterman \cite{Put}.

\begin{lemma}[Putterman \cite{Put} ]\label{intervals}
Let $K, L$ be strongly isomorphic polytopes. There exist finitely many open intervals $I_1, ..., I_m \subset [0,1]$ such that $[0,1]\setminus \cup_{j=1}^{m}I_j$ is a finite set of points, and for each $j$, all the polytopes $K_{\lambda}$ for $\lambda \in I_j$ are strongly isomorphic.
\end{lemma}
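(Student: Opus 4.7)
I would translate strong isomorphism of the $K_\lambda$'s into an analytic condition on finitely many scalar functions of $\lambda$, and then invoke the fact that a nonzero real-analytic function on a compact interval has only finitely many zeros. Since $K$ and $L$ are strongly isomorphic polytopes, they share the same set of facet unit normals $u_1,\dots,u_N$, and for each $i$ the function
$$c_i(\lambda) := \big((1-\lambda)h_K(u_i)^p + \lambda h_L(u_i)^p\big)^{1/p}$$
(interpreted as $h_K(u_i)^{1-\lambda}h_L(u_i)^\lambda$ when $p=0$) is real-analytic on an open neighborhood of $[0,1]$, because $h_K(u_i),h_L(u_i)>0$. By the very definition of the $L_p$-interpolation,
$$K_\lambda \;=\; \bigcap_{i=1}^N \{x\in\R^n:\langle x,u_i\rangle \le c_i(\lambda)\}.$$

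Two polytopes written in this way are strongly isomorphic precisely when, for every subset $S\subseteq\{1,\dots,N\}$, the set $F_S(\lambda) := K_\lambda\cap\bigcap_{i\in S}\{x:\langle x,u_i\rangle=c_i(\lambda)\}$ is non-empty and has the same dimension. Since the $u_i$'s do not depend on $\lambda$, for each $S$ the affine subspace $\{x:\langle x,u_i\rangle=c_i(\lambda),\ i\in S\}$, when non-empty, has $\lambda$-independent dimension and admits a parametrization affine in the $c_i(\lambda)$'s (for instance, via Cramer's rule after choosing a basis among $\{u_i\}_{i\in S}$). Plugging this parametrization into the inequalities $\langle x,u_j\rangle\le c_j(\lambda)$ for $j\notin S$ and applying Fourier--Motzkin elimination, the non-emptiness and exact dimension of $F_S(\lambda)$ become a finite Boolean combination of real-analytic (in)equalities in $\lambda$. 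Since each nonzero real-analytic function on $[0,1]$ has a discrete (hence finite) zero set and there are only finitely many such functions in play (finitely many subsets $S$, each producing a finite Boolean system), the critical set $E\subset[0,1]$ at which some $\dim F_S(\lambda)$ can change is finite. On each open component $I_j$ of $[0,1]\setminus E$, every $\dim F_S(\lambda)$ is constant, which is exactly the strong isomorphism of the $K_\lambda$ for $\lambda\in I_j$.

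\textbf{Main obstacle.} The delicate step is verifying that the dimension of $F_S(\lambda)$ is genuinely encoded by a finite Boolean combination of real-analytic conditions, rather than varying in an uncontrolled way; this is essentially parametric linear programming with analytic coefficients. Some care is needed when $\{u_i\}_{i\in S}$ is linearly dependent, but since the rank of this family is itself $\lambda$-independent, those degeneracies only contribute additional real-analytic constraints that we absorb into the finite Boolean system, and hence into $E$, preserving both the finiteness of $E$ and the desired conclusion.
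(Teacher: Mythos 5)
The paper does not prove this lemma itself: it is quoted as Proposition~3.6 of Putterman~\cite{Put}, so there is no in-paper argument to compare against. Your overall strategy --- encode the combinatorial type of $K_\lambda$ in the signs of finitely many real-analytic functions of $\lambda$ and invoke the discreteness of zero sets of non-trivial real-analytic functions --- is the right kind of argument and the conclusion is correct.

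There is, however, a real gap at the very first step. You write that ``by the very definition of the $L_p$-interpolation'' one has $K_\lambda=\bigcap_{i=1}^N\{x:\langle x,u_i\rangle\le c_i(\lambda)\}$. That is not the definition used here: $(1-\lambda)K+_p\lambda L$ is defined as the Wulff shape $W\bigl((1-\lambda)h_K+_p\lambda h_L\bigr)$, i.e.\ the intersection of the half-spaces $\{x:\langle x,u\rangle\le f(u)\}$ over \emph{all} $u\in S^{n-1}$, and the function $f(u)=\bigl((1-\lambda)h_K(u)^p+\lambda h_L(u)^p\bigr)^{1/p}$ is generally not piecewise linear when $p\ne1$. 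The claim that the Wulff shape nonetheless equals the polytope cut out by just the $N$ facet normals is a substantive fact that must be proved. It does hold, but the proof uses both hypotheses of the lemma: since $K$ and $L$ are strongly isomorphic, for every $u$ the values $h_K(u)$ and $h_L(u)$ are attained by the \emph{same} optimal conic combination $\sum\mu_i u_i=u$, $\mu\ge0$, in the LP-duality formula for the support function; then the $1$-homogeneity and concavity (for $p\le1$) of $(a,b)\mapsto((1-\lambda)a^p+\lambda b^p)^{1/p}$ on the positive orthant give $\sum\mu_i c_i(\lambda)\le f(u)$, hence $h_{P(\lambda)}(u)\le f(u)$ where $P(\lambda)$ is the $N$-half-space polytope, so $P(\lambda)\subseteq W(f)$ and the two coincide. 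Without this identification the $c_i(\lambda)$ do not determine $K_\lambda$, and the rest of your argument does not get off the ground. A secondary, more cosmetic point: asserting that Fourier--Motzkin elimination yields the exact dimension as a finite Boolean combination of analytic conditions is plausible but not demonstrated; a cleaner route is to parametrize the candidate vertices $x_T(\lambda)$ (solutions of $\langle x,u_i\rangle=c_i(\lambda)$, $i\in T$, for independent $n$-element $T$), note these depend analytically on $\lambda$, and observe that which $x_T(\lambda)$ are vertices of $K_\lambda$ and which of the remaining constraints are active there are each governed by finitely many analytic (in)equalities; this determines the face lattice and hence the normal fan, and the finiteness of the critical set follows as you argue.
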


Moreover, as follows from Putterman's proof, at points $p$ not contained in $\cup_{j=1}^{m} I_j$, we have that the face in $K_p$ corresponding to some normal vector now vanishes, while a face corresponding to this vector was present in the polytopes in at least one of the intervals adjacent to $p$.

This allows us to demonstrate the following:

\begin{proposition}
Assume that (\ref{local-key-eq}) holds for some $p, q < 1$ for any $C^{2,+}$ symmetric convex $K \in \mathcal{F}$ and any even $C^1$-smooth $f:\partial K\rightarrow \R$. Then for any two strongly isomorphic symmetric polytopes $K, L \in \mathcal{F}$, $\mu(K_{\lambda})^{\frac{q}{n}}$ is concave on $[0,1]$.
\end{proposition}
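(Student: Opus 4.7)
The plan is to set $\phi(\lambda) := \mu(K_\lambda)^{q/n}$ and deduce concavity of $\phi$ on $[0,1]$ by combining the interval decomposition from Lemma \ref{intervals} with the pointwise second-derivative bound from the preceding lemma, tied together by continuity of the first derivative $\phi'$.

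First I would apply Lemma \ref{intervals} to write $[0,1] = F \cup \bigcup_{j=1}^m I_j$, where $F$ is finite and each $I_j$ is an open interval along which all $K_\lambda$ are pairwise strongly isomorphic. At every $\lambda \in I_j$ the hypothesis of the preceding lemma is satisfied (a whole neighborhood of $\lambda$ within $I_j$ consists of strongly isomorphic $K_{\lambda'}$), so $\phi''(\lambda) \le 0$ on each $I_j$, and $\phi'$ is therefore non-increasing on each $I_j$. Continuity of $\phi$ itself on $[0,1]$ is immediate from Hausdorff continuity of $\lambda \mapsto K_\lambda$ and continuity of the density of $\mu$.

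The main step is to upgrade this to concavity on all of $[0,1]$, which amounts to ruling out a ``convex corner'' in $\phi$ at any transition point in $F$. I would do this by showing that $\phi' \in C([0,1])$. By the first derivative identity of Lemma 11.1, which is established without any regularity assumption on $K_\lambda$,
$$\frac{d}{d\lambda}\mu(K_\lambda) = \int_{S^{n-1}} \frac{\partial h_{K_\lambda}}{\partial \lambda}(\theta)\, d\sigma_{\mu, K_\lambda}(\theta),$$
and the integrand is jointly continuous and uniformly bounded on $[0,1] \times S^{n-1}$, since $h_\lambda = ((1-\lambda)h_K^p + \lambda h_L^p)^{1/p}$ is smooth in $\lambda$ and $h_K, h_L$ are bounded away from $0$. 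Lemma \ref{hausdorffsurfacearea}, together with $d(K_\lambda, K_{\lambda_0}) = O(|\lambda - \lambda_0|)$ in the Hausdorff metric, yields that $\sigma_{\mu, K_\lambda}$ depends weakly continuously on $\lambda$ when tested against any bounded function. A standard triangle-inequality split then gives continuity of the above integral in $\lambda$, and since $\mu(K_\lambda) > 0$ the chain rule delivers continuity of $\phi'$ on $[0,1]$.

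Finally, once $\phi \in C^1([0,1])$ with $\phi'$ non-increasing on each $I_j$, continuity of $\phi'$ across the finitely many points of $F$ forces $\phi'$ to be non-increasing on all of $[0,1]$, which is equivalent to concavity of $\phi$. The main obstacle is precisely the transition-point analysis: at a point of $F$ the combinatorial type of $K_\lambda$ jumps as a face appears or disappears, and one must verify that this jump does not produce a convex kink in $\phi$; weak continuity of the $\mu$-surface area measure provided by Lemma \ref{hausdorffsurfacearea} is exactly the ingredient that rules this out.
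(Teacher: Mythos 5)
Your proof is correct and follows the same overall skeleton as the paper's: decompose $[0,1]$ via Lemma~\ref{intervals} into finitely many open intervals $I_j$ on which the combinatorial type of $K_\lambda$ is locally constant, invoke the preceding lemma to get $\frac{d^2}{d\lambda^2}\mu(K_\lambda)^{q/n}\le 0$ on each $I_j$, and then upgrade to global concavity by establishing that $\frac{d}{d\lambda}\mu(K_\lambda)^{q/n}$ is continuous across the finitely many transition points. The difference is in how you prove that continuity. The paper argues concretely from the explicit polytope formula~(\ref{firstder}): at a transition point a facet's normal may drop out of the list, but its measure $\mu_{n-1}(F_i(K_\lambda))$ vanishes continuously, so the one-sided limits of the sum in~(\ref{firstder}) agree. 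You instead appeal to the general first-derivative identity of Lemma~11.1 (stated for arbitrary convex bodies, with no regularity) together with the weak-continuity estimate of Lemma~\ref{hausdorffsurfacearea}, plus the Lipschitz dependence $d(K_\lambda,K_{\lambda_0})=O(|\lambda-\lambda_0|)$ and the uniform boundedness and continuity of $\partial_\lambda h_{K_\lambda}$, to show that $\lambda\mapsto\int_{S^{n-1}}\partial_\lambda h_{K_\lambda}\,d\sigma_{\mu,K_\lambda}$ is continuous. Both routes are sound; the paper's is shorter here because it exploits the polytope structure it already has in hand, while yours is more robust and would apply verbatim to general convex bodies once the derivative formula is available. (One small technical point worth noting in a writeup: applying Lemma~11.1 to $\lambda\mapsto K_\lambda$ requires observing that $h_{K_{\lambda_0+\varepsilon}}=h_{K_{\lambda_0}}+\varepsilon\,\partial_\lambda h_{K_\lambda}\big|_{\lambda_0}+O(\varepsilon^2)$ and that the $O(\varepsilon^2)$ term does not affect the one-sided derivative; this is standard but should be said.)
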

\begin{proof} We apply Lemma \ref{intervals} to get a sequence of intervals $I_1, ..., I_m$. From the remark, for $p \not\in \cup_{j=1}^{m}I_j$, a face corresponding to some normal just vanishes. However, even if this face happens to be a facet (an $(n-1)-$dimensional face), our computation in (\ref{firstder}) is unaffected. We simply have that $\mu_{n-1}(F_i(K_{\lambda})) = 0$ for some of the $i$. Therefore, considering $\lambda\to p$ from the left and from the right separately, the formula in (\ref{firstder}) shows that $\frac{d}{d\lambda} \mu(K_{\lambda})$ is continuous at all points $p \in [0,1]\setminus \cup_{j=1}^{m}I_j$. Since formula (\ref{firstder}) implies continuity in $\cup_{j=1}^{m}I_j$ also, we see that $\frac{d}{d\lambda}\mu(K_{\lambda})$ is continuous on the whole interval $[0,1]$.

By Lemma 3.5, $\frac{d}{d\lambda}\left(\mu(K_{\lambda})^{\frac{q}{n}}\right)$ is nonincreasing on the intervals $I_i$, $i = 1, ..., m$. Since $\frac{d}{d\lambda} \left(\mu(K_{\lambda})^{\frac{q}{n}}\right)$ is continuous, it must therefore be nonincreasing on the whole interval $[0,1]$. In other words, $\mu(K_{\lambda})^{\frac{q}{n}}$ is concave on $[0,1]$, as desired.
\end{proof}

\begin{proof}[Proof of Theorem 3.1] Since any two symmetric convex bodies $K, L$ can be approximated by sequences of strongly isomorphic polytopes converging in the Hausdorff metric to $K, L$ respectively and $\mathcal{F}$ is open with respect to this metric, and moreover the pointwise limit of concave functions is concave, we deduce our theorem from Proposition 3.7.
\end{proof}

\section{Proof of Proposition \ref{implications}.} 

Recall that a measure is said to be ray-decreasing, if its density $f$ satisfies $f(tv)\geq f(v)$, for any $v\in\R^n$ and any $t\in[0,1].$ In particular, a density of any even log-concave measure is ray-decreasing. Further, let us assume without loss of generality that the density of the measure is $C^2-$smooth.

In view of Lemma \ref{local}, the local version of the $(p,p)-$ conjecture reads as
$$
\int_{{\partial K}} H_x f^2 d \mu - \int_{{\partial K}} \langle II^{-1} \nabla_{{\partial K}} f, \nabla_{{\partial K}} f \rangle d \mu
+ (1-p) \int_{{\partial K}} \frac{f^2}{h_K(n_x)} d\mu
 \le \frac{1}{\mu(K)} \Bigl( 1- \frac{p}{n}\Bigr) \Bigl( \int_{\partial K} f d \mu \Bigr)^2.
$$
By Theorem \ref{loc->glob}, the local version is equivalent to the global version. Therefore, in order to show that the $(p,p)$-inequality strengthens when $p$ decreases, it is enough to show that
\begin{equation}\label{toshow}
\mu(K)\int_{\partial K}\frac{f^2}{\langle x,n_x\rangle}d\mu\geq \frac{1}{n} \left(\int_{\partial K} fd\mu\right)^2,
\end{equation}
for any measure $\mu$ with a ray-decreasing smooth density. 

We shall verify (\ref{toshow}). We write (see, e.g., Nazarov \cite{fedia} as well as Livshyts \cite{Liv}, \cite{Liv1}, \cite{Liv2}):
$$\mu(K)=\int_0^1\int_{\partial K} \langle x,n_x\rangle t^{n-1} e^{-V(tx)}dt dH_{n-1}(x).$$
As the density $e^{-V}$ is ray-decreasing, the function $V$ is ray-increasing, and thus we see that $V(tx)\leq V(x)$, for all $t\in [0,1]$. We conclude that
$$\mu(K)\geq \frac{1}{n} \int_{\partial K} \langle x,n_x\rangle d\mu,$$
which, together with Cauchy's inequality, implies (\ref{toshow}). $\square$

\section{An application of Bochner's method and integration by parts}

Consider an even measure $\mu$ on $\R^n$ with $C^2$ density $d\mu(x)=e^{-V(x)}dx$, and fix a $C^{2,+}-$smooth symmetric convex set $K$. In this section and everywhere below, we use notation
$$\int:=\frac{1}{\mu(K)}\int_K d\mu(x).$$
We shall also use the notation
$$Var(g)=\int g^2-\left(\int g\right)^2.$$
Let 
$$Lu=\Delta u-\langle \nabla u, \nabla V\rangle.$$ 

The following Bochner-type identity was obtained by Kolesnikov and Milman \cite{KM1}. It is a particular case of Theorem 1.1 in \cite{KM1} (note that $\rm{Ric}_{\mu}  = \nabla^2 \it{V}$ in our case). This is a generalization of a classical result of R.C.~Reilly.

\begin{proposition}[Kolesnikov-Milman \cite{KM1}]\label{raileyprop}
Let $u \in C^2(K)$  and $u_{n} = \langle \nabla u, n_x \rangle \in C^1(\partial K)$. Then  
\begin{align}
\label{railey}
\int_{K} (L u)^2 d \mu & =\int_{K} \left(||\nabla^2 u||^2+\langle\nabla^2 V \nabla u, \nabla u\rangle\right) d \mu+
\\&  \nonumber\int_{\partial K}  (H_x u_n^2 -2\langle \nabla_{\partial K} u, \nabla_{\partial K}  u_n\rangle +\langle \mbox{\rm{II}} \nabla_{\partial K} u, \nabla_{\partial K}  u\rangle )  \,d\mu_{\partial K} (x).
\end{align}
\end{proposition}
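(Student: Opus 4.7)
The plan is to derive this weighted Reilly-type identity by combining the classical pointwise Bochner formula with two applications of integration by parts. The pointwise starting point is the weighted Bochner identity on $\R^n$:
$$\tfrac{1}{2} L |\nabla u|^2 = \|\nabla^2 u\|^2 + \langle \nabla u, \nabla(Lu)\rangle + \langle \nabla^2 V \nabla u, \nabla u\rangle,$$
which follows from the classical Bochner identity $\tfrac{1}{2}\Delta |\nabla u|^2 = \|\nabla^2 u\|^2 + \langle \nabla u, \nabla \Delta u\rangle$ combined with the commutator computation $[\nabla, \langle \nabla V, \nabla \cdot\rangle]u = (\nabla^2 V) \nabla u$. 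The Bakry--Emery term $\langle \nabla^2 V \nabla u, \nabla u\rangle$ plays the role of the Ricci term, as is standard for weighted Laplacians with $\mathrm{Ric}_{\mu} = \nabla^2 V$ in the Euclidean setting.

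Next I would integrate this pointwise identity over $K$ against $d\mu$ and convert the two $L$-terms into boundary integrals using the weighted divergence theorem $\int_K Lf\, d\mu = \int_{\partial K} f_n\, d\mu_{\partial K}$. Applied to the left-hand side with $f = \tfrac{1}{2}|\nabla u|^2$, and applied in its self-adjoint form $\int_K (Lu)\,v\, d\mu = -\int_K \langle \nabla u, \nabla v\rangle\, d\mu + \int_{\partial K} u_n\, v\, d\mu_{\partial K}$ with $v = Lu$ to handle the middle term, this produces
$$\int_K (Lu)^2\, d\mu = \int_K \!\Bigl(\|\nabla^2 u\|^2 + \langle \nabla^2 V \nabla u, \nabla u\rangle\Bigr) d\mu + \int_{\partial K}\!\Bigl[(Lu)\,u_n - \tfrac{1}{2}\partial_n|\nabla u|^2\Bigr] d\mu_{\partial K}.$$

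The main obstacle, and the step I expect to require the most care, is to recast the bracketed boundary integrand into the geometric form $H_x u_n^2 - 2\langle \nabla_{\partial K} u, \nabla_{\partial K} u_n\rangle + \langle \mathrm{II}\, \nabla_{\partial K} u, \nabla_{\partial K} u\rangle$. At each boundary point I would choose an orthonormal frame with $e_n = n_x$ tangent to $\partial K$ for the remaining vectors, split $\nabla u = \nabla_{\partial K} u + u_n n_x$, and use the decomposition $\Delta u = u_{nn} + \Delta_{\partial K} u + \mathrm{tr}(\mathrm{II})\, u_n$ of the full Laplacian along the boundary. The term $\mathrm{tr}(\mathrm{II})u_n^2$ combines with $-\langle \nabla V, n_x\rangle u_n^2$ coming from $Lu \cdot u_n$ to yield exactly $H_x u_n^2$. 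Simultaneously, the identity $\tfrac{1}{2}\partial_n |\nabla u|^2 = u_n u_{nn} + \langle \nabla_{\partial K} u, \nabla_n \nabla_{\partial K} u\rangle$ cancels the $u_n u_{nn}$ term, while the tangent-normal commutator $\nabla_n \nabla_{\partial K} u = \nabla_{\partial K} u_n - \mathrm{II}(\nabla_{\partial K} u, \cdot)$ (from differentiating the unit normal along tangential directions) produces the $\langle \mathrm{II}\, \nabla_{\partial K} u, \nabla_{\partial K} u\rangle$ piece.

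Finally, the piece $u_n \Delta_{\partial K} u$ arising from the decomposition of $\Delta u$ must be integrated by parts along the closed hypersurface $\partial K$ against the weighted surface measure $e^{-V}\,dH_{n-1}$; this yields $-\langle \nabla_{\partial K} u, \nabla_{\partial K} u_n\rangle$ plus a correction term $\langle \nabla_{\partial K} V, \nabla_{\partial K} u\rangle u_n$ coming from differentiating the weight. Combined with the normal-tangential term already produced, this doubles the $\langle \nabla_{\partial K} u, \nabla_{\partial K} u_n\rangle$ coefficient to $-2$, and the weight correction is precisely what is needed so that the $\langle \nabla V, \cdot\rangle$ contributions gather only into $H_x$ through its definition $\mathrm{tr}(\mathrm{II}) - \langle \nabla V, n_x\rangle$. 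The book-keeping of signs and weight corrections across these two rounds of integration by parts is the principal source of potential errors; I would cross-check the final identity against the case $V \equiv 0$, where it must reduce to the classical Reilly formula, and against the case $u(x) = \tfrac{1}{2}|x|^2$, where both sides admit direct evaluation.
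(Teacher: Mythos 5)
The paper does not prove this proposition; it is cited verbatim from Kolesnikov--Milman \cite{KM1} (as a special case of their Theorem 1.1 with $\mathrm{Ric}_\mu = \nabla^2 V$), so there is no ``paper's proof'' to compare against at the level of detail. Your proposed derivation is nonetheless correct and is the standard route to a generalized Reilly identity. The weighted Bochner formula you start from is right: expanding $\tfrac12 L|\nabla u|^2 = \tfrac12\Delta|\nabla u|^2 - \langle \nabla^2 u\,\nabla u, \nabla V\rangle$ and using the classical flat Bochner identity together with $\nabla(Lu) = \nabla\Delta u - \nabla^2 u\,\nabla V - \nabla^2 V\,\nabla u$ indeed gives $\tfrac12 L|\nabla u|^2 = \|\nabla^2 u\|^2 + \langle\nabla u,\nabla(Lu)\rangle + \langle\nabla^2 V\nabla u,\nabla u\rangle$. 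Integrating against $d\mu$ with the two applications of weighted Green's identity you describe yields exactly the bulk terms and the boundary integrand $u_n(Lu) - \tfrac12\partial_n|\nabla u|^2$.

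For the boundary step, your plan is sound and the sign bookkeeping works out. Writing $\Delta u = \Delta_{\partial K}u + \mathrm{tr}(\mathrm{II})\,u_n + u_{nn}$ and $Lu = \Delta u - \langle\nabla_{\partial K}V,\nabla_{\partial K}u\rangle - u_n\langle\nabla V,n_x\rangle$ gives $u_n(Lu) = u_n\Delta_{\partial K}u + H_x u_n^2 + u_n u_{nn} - u_n\langle\nabla_{\partial K}V,\nabla_{\partial K}u\rangle$, and $\tfrac12\partial_n|\nabla u|^2 = u_n u_{nn} + \langle(\nabla^2 u\,n_x)^T,\nabla_{\partial K}u\rangle$ cancels the $u_n u_{nn}$ term. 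The Weingarten computation $(\nabla^2 u\,n_x)^T = \nabla_{\partial K}u_n - \mathrm{II}(\nabla_{\partial K}u,\cdot)$ then produces one copy of $-\langle\nabla_{\partial K}u_n,\nabla_{\partial K}u\rangle$ plus the $\langle\mathrm{II}\,\nabla_{\partial K}u,\nabla_{\partial K}u\rangle$ term, and integrating $u_n\Delta_{\partial K}u$ by parts along the closed hypersurface against $e^{-V}\,dH_{n-1}$ produces the second copy of $-\langle\nabla_{\partial K}u_n,\nabla_{\partial K}u\rangle$ together with $+u_n\langle\nabla_{\partial K}V,\nabla_{\partial K}u\rangle$, which exactly cancels the leftover weight term. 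One small notational caution: $\nabla_n\nabla_{\partial K}u$ requires an off-surface extension of $\nabla_{\partial K}u$ to be literal (the tangential projection of $\nabla u$ via the signed-distance normal field does the job), but with that understanding your formula is the Weingarten identity and the argument is complete.
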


Therefore, we get:

\begin{lemma}\label{pqconj-byparts}
Suppose for every even $f\in C^2(\partial K)$ there exists $u\in C^2(K)$ such that for each $x\in\partial K,$
$$\langle \nabla u,n_x\rangle=f(x),$$
and
$$\int ||\nabla^2 u||^2+\langle\nabla^2 V \nabla u, \nabla u\rangle \geq Var (Lu)+\frac{q}{n}\left(\int Lu\right)^2+\frac{1-p}{\mu(K)}\int_{\partial K} \frac{\langle \nabla u, n_x\rangle^2}{\langle x, n_x\rangle}d\mu_{\partial K}.$$
Then for every $C^2$-smooth symmetric convex set $L$, and every $\lambda\in[0,1]$, 
$$\mu(\lambda K+_p(1-\lambda)L)^{\frac{q}{n}}\geq \lambda \mu(K)^{\frac{q}{n}}+(1-\lambda)\mu(L)^{\frac{q}{n}}.$$
\end{lemma}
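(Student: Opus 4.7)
The plan is to use the Bochner-type identity of Proposition \ref{raileyprop} to convert the hypothesis on $u$ into the local form of the $(p,q)$-inequality, equation (\ref{local-key-eq}), and then invoke the local-to-global result Theorem \ref{loc->glob}.

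First I would rewrite (\ref{railey}) by dividing through by $\mu(K)$ and solving for the interior bulk term; since $u_n = f$ on $\partial K$ one has $\nabla_{\partial K} u_n = \nabla_{\partial K} f$, so
$$\int \|\nabla^2 u\|^2 + \langle \nabla^2 V \nabla u, \nabla u\rangle = \int (Lu)^2 - \frac{1}{\mu(K)}\int_{\partial K}\bigl[H_x f^2 - 2\langle \nabla_{\partial K} u, \nabla_{\partial K} f\rangle + \langle \mbox{\rm{II}}\,\nabla_{\partial K} u, \nabla_{\partial K} u\rangle\bigr]\, d\mu_{\partial K}.$$
Also, the divergence theorem applied to $Lu = e^V\,\mathrm{div}(e^{-V}\nabla u)$ gives $\int Lu = \mu(K)^{-1}\int_{\partial K} f\, d\mu_{\partial K}$. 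Splitting $\int(Lu)^2 = Var(Lu) + (\int Lu)^2$ and substituting into the hypothesis makes the $Var(Lu)$ terms cancel, leaving
$$\Bigl(1 - \tfrac{q}{n}\Bigr)\Bigl(\int Lu\Bigr)^2 \geq \frac{1}{\mu(K)}\int_{\partial K}\Bigl[H_x f^2 + \langle \mbox{\rm{II}}\,\nabla_{\partial K} u, \nabla_{\partial K} u\rangle - 2\langle \nabla_{\partial K} u, \nabla_{\partial K} f\rangle + (1-p)\tfrac{f^2}{\langle x, n_x\rangle}\Bigr]\, d\mu_{\partial K}.$$

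The key pointwise step is that on $\partial K$ (where the body is $C^{2,+}$) the second fundamental form $\mbox{\rm{II}}$ is positive definite, so completing the square in the variable $\nabla_{\partial K} u$ yields
$$\langle \mbox{\rm{II}}\,\nabla_{\partial K} u, \nabla_{\partial K} u\rangle - 2\langle \nabla_{\partial K} u, \nabla_{\partial K} f\rangle \geq -\langle \mbox{\rm{II}}^{-1}\nabla_{\partial K} f, \nabla_{\partial K} f\rangle,$$
with equality exactly when $\nabla_{\partial K} u = \mbox{\rm{II}}^{-1}\nabla_{\partial K} f$. Using this to weaken the boundary integral, and substituting the expression for $\int Lu$, I arrive precisely at the local inequality (\ref{local-key-eq}) of Lemma \ref{local} for every even $f\in C^2(\partial K)$. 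The global conclusion then follows from Theorem \ref{loc->glob}, applied to the family of $C^2$-smooth symmetric convex bodies (on which the hypothesis, and therefore the local inequality, is granted).

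The only genuine obstacle in a Bochner-style reduction of this sort is the existence of a sufficiently regular $u$ solving the Neumann-type problem $\langle \nabla u, n_x\rangle = f$; but such existence is precisely what the hypothesis provides, so all that remains is the algebraic rearrangement and the completion-of-the-square step above.
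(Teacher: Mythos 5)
Your proposal is correct and follows essentially the same route as the paper: rearrange the Bochner identity of Proposition \ref{raileyprop}, use the divergence theorem for $\int Lu$, and apply the completion-of-squares inequality for the positive definite form $\mbox{\rm{II}}$ to eliminate the cross-term $\nabla_{\partial K} u$ in favor of $\mbox{\rm{II}}^{-1}\nabla_{\partial K} f$, landing at the local inequality (\ref{local-key-eq}) and then invoking Theorem \ref{loc->glob}. You simply spell out the algebraic rearrangement more explicitly than the paper does; the mathematical content is identical.
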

\begin{proof} Recall that for any positive definite $n\times n$ matrix $A$ and for any $x,y\in\R^n$ we have
\begin{equation}\label{matrix}
\langle Ax,x\rangle+\langle A^{-1}y,y\rangle\geq 2 \langle x,y\rangle.
\end{equation}
As $K$ is convex, its second quadratic form $\rm{II}$ is positive definite, and consequently, 
\begin{equation}\label{sqf}
-2\langle \nabla_{\partial K} u, \nabla_{\partial K}  u_n\rangle +\langle \mbox{\rm{II}} \nabla_{\partial K} u, \nabla_{\partial K}  u\rangle\geq -\langle \mbox{\rm{II}}^{-1}\nabla_{\partial K}  f,  \nabla_{\partial K}  f\rangle.
\end{equation}
Recall also that 
\begin{equation}\label{diverg}
\int_K Lu d\mu= \int_{\partial K} \langle \nabla u,n_x\rangle d\mu_{\partial K}.
\end{equation}
By (\ref{sqf}), (\ref{diverg}) and Proposition \ref{raileyprop}, the assumption of this Lemma implies the validity of the local version of Conjecture \ref{mainconj}, as per Lemma \ref{local}. The Lemma thus follows from Theorem \ref{loc->glob}. 
\end{proof}

\section{Preparatory estimates}

Fix a measure $\mu$ with even density $e^{-V}$ as in Theorem \ref{main}. Suppose \begin{align}rB_2^n\subset K\subset R B_2^n.
\end{align}
Let $C_{poin}(K;\mu)$ be the Poincare constant of the restriction of $\mu$ on $K,$ that is the smallest non-negative number such that
$$\int_K f^2 d\mu-\frac{1}{\mu(K)}\left(\int_K f d\mu\right)^2\leq C^2_{poin}(K;\mu) \int_{K}|\nabla f|^2d\mu,$$
for every differentiable function $f:K\rightarrow\R.$ We recall that $C^{-2}_{poin}(K;\mu)$ is the first Neumann eigenvalue of the operator $Lu=\Delta u-\langle \nabla u,\nabla V\rangle$ restricted on $K,$ or in other words, $C_{poin}(\mu;K)$ is the smallest number such that there exists a function $u$ with $Lu=-C_{poin}(\mu;K)^{-2} u$ on $K$ and $\langle \nabla u,n_x\rangle=0$ on $\partial K.$ The celebrated Kannan-Lovasz-Simonovits (KLS) conjecture \cite{KLS} states that the Poincare constant of an isotropic log-concave measure is bounded from above by an absolute constant, independent of the dimension (in our current notation, isotropicity means that the restriction of $\mu$ onto $K$ is isotropic). See Lee-Vempala \cite{LV} for the best to date estimate in the direction of this conjecture, and for the discussion of history and powerful implications of the KLS conjecture.

The following fact is classical and appears, e.g. in Lemma 5.1 from \cite{KolLiv}.

\begin{lemma}\label{nabla V}
For any symmetric convex set $K$, and even log-concave measure $\mu$ with even density $e^{-V}$,
$$\frac{1}{\mu(K)}\int_K |\nabla V|^2d\mu\leq \int \Delta Vd\mu.$$
\end{lemma}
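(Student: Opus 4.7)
The plan is to reduce the inequality, via one integration by parts, to a Brascamp--Lieb-type variance inequality for the restricted log-concave measure $\nu := \mu|_K / \mu(K)$, and then to apply Brascamp--Lieb coordinate-wise using the partial derivatives $\partial_i V$ as test functions.

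First, I would apply the divergence theorem to the vector field $F(x) = e^{-V(x)} \nabla V(x)$ on $K$, whose divergence equals $e^{-V}(\Delta V - |\nabla V|^2)$. This yields the identity
\begin{equation*}
\int_K \Delta V\, d\mu \;-\; \int_K |\nabla V|^2\, d\mu \;=\; \int_{\partial K}\langle \nabla V, n_x\rangle\, d\mu_{\partial K},
\end{equation*}
so the lemma is equivalent to the non-negativity of the boundary integral on the right. One should note that the pointwise assertion $\langle \nabla V(x), n_x\rangle \geq 0$ on $\partial K$ cannot be expected in general (it may fail at individual boundary points even under the stated hypotheses), so a genuinely integrated argument is needed.

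Next, since $\nu$ is an even log-concave probability on the convex body $K$, I would invoke the Brascamp--Lieb variance inequality: assuming first that $V$ is strictly convex (the general case follows by the standard approximation $V \rightsquigarrow V + \varepsilon|x|^2/2$ and $\varepsilon \downarrow 0$), for every sufficiently smooth $f$ one has
\begin{equation*}
\int_K (f-\bar f)^2\, d\nu \;\leq\; \int_K\bigl\langle(\nabla^2 V)^{-1}\nabla f,\nabla f\bigr\rangle\, d\nu,\qquad \bar f := \int_K f\, d\nu.
\end{equation*}
Apply this to $f := \partial_i V$ for each $i = 1, \dots, n$. The evenness of $V$ makes $\partial_i V$ odd, so $\bar f = 0$ since $\nu$ is even; and the gradient $\nabla \partial_i V = (\nabla^2 V)\,e_i$ is the $i$th column of the Hessian, which gives
\begin{equation*}
\bigl\langle (\nabla^2 V)^{-1}(\nabla^2 V)\,e_i,\, (\nabla^2 V)\,e_i\bigr\rangle \;=\; \langle e_i,\, (\nabla^2 V)\,e_i\rangle \;=\; \partial_{ii} V.
\end{equation*}
Thus Brascamp--Lieb collapses to $\int_K (\partial_i V)^2\, d\nu \leq \int_K \partial_{ii}V\, d\nu$; summing over $i$ and multiplying by $\mu(K)$ delivers exactly the desired inequality.

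The main subtlety I expect to handle is the applicability of Brascamp--Lieb to the restricted measure $\nu$ rather than to a log-concave measure on all of $\R^n$. This extension is classical and can be derived directly from the Bochner--Reilly identity of Proposition~\ref{raileyprop} applied on $K$ with Neumann boundary data: by convexity of $K$ the boundary term there has a definite sign, and combining the resulting a priori estimate with a solution of the Neumann Poisson problem and a Cauchy--Schwarz inequality produces the required variance bound. The approximation $V \rightsquigarrow V + \varepsilon|x|^2/2$ preserves evenness, so the crucial identity $\bar f = 0$ for $f = \partial_i V$ persists throughout the limit.
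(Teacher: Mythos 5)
Your proof is correct. The paper does not prove this lemma itself (it defers to Lemma 5.1 of \cite{KolLiv}), and your argument --- the Brascamp--Lieb inequality for the restriction $\mu|_K/\mu(K)$ applied to the odd test functions $f=\partial_i V$, whose means vanish by symmetry and for which the Brascamp--Lieb quadratic form collapses to $\partial_{ii}V$, together with the strict-convexity approximation --- is exactly the classical argument being referenced; your observation that the pointwise boundary inequality $\langle\nabla V,n_x\rangle\ge 0$ can fail (so the opening integration by parts alone does not suffice) is also accurate, though that reduction is not actually needed once the Brascamp--Lieb step is in place.
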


Next, we show, using ideas similar to \cite{KolLiv}:

\begin{lemma}\label{estimate1-upd}
Suppose $K$ is a set. Let $u:K\rightarrow \R$ be a $C^2$-smooth function, and fix $a,b>0.$ Then
$$ a||\nabla^2 u||^2+b|\nabla u|^2 \geq \frac{ab(Lu)^2}{a|\nabla V|^2+bn}.$$
\end{lemma}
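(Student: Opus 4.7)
The plan is to derive the inequality pointwise at each $x \in K$, via two applications of Cauchy--Schwarz. Since $Lu = \Delta u - \langle \nabla u, \nabla V\rangle$ is a sum of two real numbers, the natural first step is the weighted Cauchy--Schwarz inequality
\[
(s + t)^2 \leq (\alpha + \beta)\Bigl( \tfrac{s^2}{\alpha} + \tfrac{t^2}{\beta} \Bigr),
\]
valid for any $\alpha, \beta > 0$. Applied with $s = \Delta u$, $t = -\langle \nabla u, \nabla V\rangle$, and weights $\alpha = bn$, $\beta = a|\nabla V|^2$, this yields
\[
(Lu)^2 \leq (bn + a|\nabla V|^2) \Bigl( \tfrac{(\Delta u)^2}{bn} + \tfrac{\langle \nabla u, \nabla V\rangle^2}{a|\nabla V|^2} \Bigr).
\]

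The second ingredient consists of the two standard pointwise bounds: first, $\Delta u = \mathrm{tr}(\nabla^2 u)$, so Cauchy--Schwarz gives $(\Delta u)^2 \leq n \|\nabla^2 u\|^2$; second, Cauchy--Schwarz in $\R^n$ gives $\langle \nabla u, \nabla V\rangle^2 \leq |\nabla u|^2 |\nabla V|^2$. Substituting both bounds into the inequality above, the factor of $|\nabla V|^2$ cancels in the second term and we obtain
\[
(Lu)^2 \leq (bn + a|\nabla V|^2) \Bigl( \tfrac{\|\nabla^2 u\|^2}{b} + \tfrac{|\nabla u|^2}{a} \Bigr) = \tfrac{1}{ab}\bigl(a|\nabla V|^2 + bn\bigr)\bigl(a\|\nabla^2 u\|^2 + b|\nabla u|^2\bigr).
\]
Rearranging yields exactly the claimed inequality $a\|\nabla^2 u\|^2 + b|\nabla u|^2 \geq \tfrac{ab(Lu)^2}{a|\nabla V|^2 + bn}$.

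There is essentially no obstacle: the entire argument is pointwise and uses only Cauchy--Schwarz, so no smoothness, symmetry, or convexity assumptions beyond those implicit in the statement (existence of $\nabla^2 u$ and $\nabla V$) are needed. The only modest subtlety is the specific choice of weights $\alpha = bn$ and $\beta = a|\nabla V|^2$, which is dictated precisely by the form of the denominator $a|\nabla V|^2 + bn$ that must appear on the right, and which ensures that the bound $(\Delta u)^2 \leq n\|\nabla^2 u\|^2$ pairs with the weight on $(\Delta u)^2$ to produce the coefficient $1/b$, and likewise for the other term. This choice makes the two Cauchy--Schwarz steps combine cleanly without residual error terms.
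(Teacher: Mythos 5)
Your proof is correct. It reaches the same inequality as the paper but by a genuinely different route. The paper normalizes $a=1$, writes $\Delta u = Lu + \langle \nabla V, \nabla u\rangle$ and lower-bounds $\tfrac{1}{n}(\Delta u)^2 + b|\nabla u|^2$ as a quadratic form in $\nabla u$ with $A = \tfrac{1}{n}\nabla V\otimes \nabla V + b\,\mathrm{Id}$, then invokes the matrix inequality $\langle Ax,x\rangle + \langle A^{-1}y,y\rangle \geq 2\langle x,y\rangle$ together with the explicit rank-one inverse formula $(\beta\,\mathrm{Id}+\alpha\, z\otimes z)^{-1}z = z/(\beta + \alpha|z|^2)$. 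You instead decompose in the opposite direction, $Lu = \Delta u - \langle \nabla u, \nabla V\rangle$, and apply the scalar two-term weighted Cauchy--Schwarz (Engel form) with weights tailored to produce the denominator, then absorb each square via $(\Delta u)^2 \leq n\|\nabla^2 u\|^2$ and $\langle\nabla u, \nabla V\rangle^2 \leq |\nabla u|^2|\nabla V|^2$. Your approach is more elementary and shorter: it replaces the matrix inequality and the Sherman--Morrison-type inversion by a single scalar Cauchy--Schwarz. The only cosmetic caveat is that your choice of weight $\beta = a|\nabla V|^2$ is degenerate at points where $\nabla V = 0$; there the claimed inequality reduces to $a\|\nabla^2 u\|^2 + b|\nabla u|^2 \geq \tfrac{a(\Delta u)^2}{n}$, which follows directly from $(\Delta u)^2 \leq n\|\nabla^2 u\|^2$, so one either treats this as a (trivial) separate case or passes to the limit. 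Worth a half-sentence, but it does not affect the correctness of the argument.
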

\begin{proof} Assume without loss of generality that $a=1$. By the Cauchy-Schwarz inequality,
\begin{equation}\label{hessian}
 ||\nabla^2 u||^2 \geq \frac{1}{n}  |\Delta u|^2 .
\end{equation}
Indeed, recall that $||\nabla^2 u||^2=\sum_{i=1}^n \lambda^2_i$, where $\lambda_1,...,\lambda_n$ are the eigenvalues of $\nabla^2 u$, and recall also that $\Delta u=\sum_{i=1}^n \lambda_i$. Hence (\ref{hessian}) follows.

Next, writing $\Delta u=Lu+\langle \nabla V,\nabla u\rangle$, we see that
$$||\nabla^2 u||^2+b|\nabla u|^2 \geq \frac{1}{n}\left(Lu+\langle \nabla V,\nabla u\rangle\right)^2+b|\nabla u|^2=$$
\begin{equation}\label{estab}
\langle A \nabla u,\nabla u\rangle+2\langle \frac{Lu}{n}\nabla V, \nabla u\rangle+\frac{(Lu)^2}{n},
\end{equation}
where
$$A=\frac{1}{n}\nabla V\otimes \nabla V+b Id.$$
We observe that for any vector $z\in\R^n$ and for all $\alpha, \beta\in \R,$
$$
\left( \beta Id+\alpha z\otimes z\right)^{-1}z=\frac{z}{\beta+\alpha|z|^2}.
$$
Using this observation, and the inequality (\ref{matrix}) with $A$ defined above (as it is indeed positive definite), $x=-\nabla u$ and $y=\frac{Lu}{n}\nabla V$, we estimate (\ref{estab}) from below by
$$\frac{(Lu)^2}{n}\left(1 - \frac{1}{n}\langle A^{-1} \nabla V, \nabla V \rangle\right) = \frac{b(Lu)^2}{bn+|\nabla V|^2}.$$

Rescaling finishes the proof.
\end{proof}

From Lemmas \ref{nabla V} and \ref{estimate1-upd} we get

\begin{lemma}\label{estimate3-upd}
Suppose $K$ is a symmetric convex set. Let $u:K\rightarrow \R$ be an even $C^2$-smooth function such that $Lu=1$ on $K$, and fix $a, b\in\R$ such that $a C^{-2}_{poin}(K,\mu)+b\geq 0$ and $a\geq b.$ Then
$$ \int a||\nabla^2 u||^2+b|\nabla u|^2 \geq \frac{C^{-2}_{poin}(K,\mu)a+b}{\left(1+C^{-2}_{poin}(K,\mu)\right)(1+k_2)n}.$$
\end{lemma}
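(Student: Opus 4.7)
The plan is to apply the pointwise bound of Lemma \ref{estimate1-upd} after first transferring a suitable amount of the Hessian norm into the gradient norm via the Poincar\'e inequality, choosing the trade-off so that the resulting bound matches the desired form. Write $\alpha = C^{-2}_{poin}(K,\mu)$. Since $u$ is even and $\mu$ has an even density, each $\partial_i u$ is odd, so $\int \partial_i u = 0$; applying the Poincar\'e inequality componentwise and summing gives
$$\int \|\nabla^2 u\|^2 \geq \alpha \int |\nabla u|^2.$$

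For a parameter $s \geq 0$, I split $a\|\nabla^2 u\|^2 = (a-s)\|\nabla^2 u\|^2 + s\|\nabla^2 u\|^2$ and apply the Poincar\'e bound to the second piece, obtaining
$$\int a\|\nabla^2 u\|^2 + b|\nabla u|^2 \geq \int (a-s)\|\nabla^2 u\|^2 + (b+s\alpha)|\nabla u|^2.$$
Then I invoke Lemma \ref{estimate1-upd} pointwise with the constants $a - s$ and $b + s\alpha$ (using $Lu = 1$), integrate, apply Jensen's inequality to the convex function $1/x$, and finally use Lemma \ref{nabla V} together with the hypothesis $\int \Delta V \leq k_2 n$ to obtain
$$\int a\|\nabla^2 u\|^2 + b|\nabla u|^2 \geq \frac{(a-s)(b+s\alpha)}{n\bigl[(a-s)k_2 + b + s\alpha\bigr]}.$$

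The crucial observation is that the \emph{balanced} choice $s = \frac{a-b}{1+\alpha}$ equalizes the two coefficients, $a-s = b + s\alpha = \frac{a\alpha + b}{1+\alpha}$, which causes the rational expression to collapse to exactly
$$\frac{a\alpha + b}{n(1+\alpha)(1+k_2)},$$
as claimed. The hypothesis $a \geq b$ ensures $s \geq 0$, which is required to use Poincar\'e on the transferred piece, while $a\alpha + b \geq 0$ ensures both coefficients $a - s$ and $b + s\alpha$ are nonnegative, legitimizing the application of Lemma \ref{estimate1-upd} (the borderline case $a\alpha + b = 0$ gives the trivial bound $0$). The main obstacle is spotting this balanced choice of $s$; once one notices that the rational function $\frac{(a-s)(b+s\alpha)}{(a-s)k_2 + b + s\alpha}$ takes its cleanest value when the two linear factors of $s$ coincide, the rest is a short chain of elementary inequalities.
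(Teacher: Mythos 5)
Your proof is correct and follows the paper's own argument essentially verbatim: transfer a piece $s\|\nabla^2 u\|^2$ to the gradient term via the componentwise Poincar\'e inequality (with $\int\nabla u=0$ from parity), apply Lemma~\ref{estimate1-upd} pointwise, integrate, use Jensen's inequality and Lemma~\ref{nabla V}, and pick $s=\frac{a-b}{1+C^{-2}_{poin}}$ to equalize the two coefficients so that the resulting fraction simplifies. The only cosmetic difference is your notation $s$ in place of the paper's $\epsilon$, and your explicit remark about the borderline case $aC^{-2}_{poin}+b=0$, which the paper leaves implicit.
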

\begin{proof} Since $u$ is even, 
$$\int \nabla u=0,$$
and we apply the Poincare inequality:
$$
\int ||\nabla^2 u||^2\geq C^{-2}_{poin}(K,\mu)\int |\nabla u|^2.
$$
We estimate
\begin{equation}\label{poin-sect5}
\int a||\nabla^2 u||^2+b|\nabla u|^2 \geq  \int (a-\epsilon)||\nabla^2 u||^2+(b+\epsilon C^{-2}_{poin}(K,\mu))|\nabla u|^2.
\end{equation}
Pick $\epsilon=\frac{a-b}{1+ C^{-2}_{poin}(K,\mu)}$. Note that $\epsilon\geq 0$ since $a\geq b.$ We get 
$$a-\epsilon=b+\epsilon C^{-2}_{poin}(K,\mu)=\frac{b+aC^{-2}_{poin}(K,\mu)}{1+C^{-2}_{poin}(K,\mu)}.$$ 
We combine (\ref{poin-sect5}) with Lemma \ref{estimate1-upd} (applied with the parameters $a-\epsilon$ and $b+\epsilon C^{-2}_{poin}(K,\mu)$), to get
$$ \int a||\nabla^2 u||^2+b|\nabla u|^2 \geq \int\frac{C^{-2}_{poin}(K,\mu)a+b}{\left(1+C^{-2}_{poin}(K,\mu)\right)(n+|\nabla V|^2)}.$$
Next, we use Jensen's inequality and recall that \begin{align}\label{gradvineq} \int |\nabla V|^2\leq k_2n\end{align} by Lemma \ref{nabla V} (in view of the definition of $k_2$). The Lemma follows.
\end{proof}

\section{Proof of Theorem \ref{lebesgue}.}

Let $K$ be a symmetric convex set, and denote by $C_{poin}(K)$ the Poincare constant of the restriction of the Lebesgue measure on $K.$



\begin{proposition}\label{propnew}
For every convex symmetric $C^2$-smooth set $K$, and for every non-negative even $C^1$-smooth function $f:\partial K\rightarrow\R$, there exists a $C^2$-smooth function $u: K\rightarrow\R$ such that $\langle \nabla u,n_x\rangle=f(x)$ for all $x\in\partial K,$ and such that
$$\int ||\nabla^2 u||^2\geq Var(\Delta u)+\frac{1-p}{|K|}\int_{\partial K}\frac{\langle \nabla u,n_x\rangle^2}{\langle x,n_x\rangle},$$
whenever
$$p\geq \max\left(1-\frac{r}{C_{poin}(K)(\sqrt{n}+1)},0\right).$$
\end{proposition}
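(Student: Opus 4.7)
The plan is to take $u$ as the even solution of the Neumann problem $\Delta u = c$ on $K$, $\langle \nabla u, n_x\rangle = f$ on $\partial K$, where $c := \frac{1}{|K|}\int_{\partial K}f\,d\sigma$ is fixed by the divergence-theorem compatibility condition. The symmetry of $K$ and the evenness of $f$ force $u$ to be even (setting $v(x)=u(-x)$ and checking that $v$ solves the same Neumann problem, then invoking uniqueness up to an additive constant), so each $\partial_i u$ is an odd function with zero mean on $K$. The Poincar\'e inequality $\int_K |\nabla u|^2\,dx \leq C_{poin}^2(K)\int_K\|\nabla^2 u\|^2\,dx$ therefore applies componentwise to $\nabla u$. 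Since $\Delta u = c$ is constant, $Var(\Delta u)=0$, and the target inequality collapses to
$$\int_K \|\nabla^2 u\|^2\,dx \geq (1-p)\int_{\partial K}\frac{f^2}{\langle x, n_x\rangle}\,d\sigma.$$

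For the right-hand side I would use $\langle x, n_x\rangle = h_K(n_x) \geq r$ (from $rB_2^n\subset K$) together with the pointwise bound $f^2=\langle\nabla u, n_x\rangle^2\leq|\nabla u|^2$ on $\partial K$ to obtain $\int_{\partial K}\frac{f^2}{\langle x, n_x\rangle}\,d\sigma \leq \frac{1}{r}\int_{\partial K}|\nabla u|^2\,d\sigma$, and then convert the surface integral to an interior one via the divergence identity
$$\int_{\partial K}|\nabla u|^2\,d\sigma \leq \frac{1}{r}\int_K\bigl(n|\nabla u|^2 + 2\langle x,\nabla^2 u\cdot\nabla u\rangle\bigr)\,dx,$$
obtained from $\operatorname{div}(|\nabla u|^2 x)$ and the lower bound on $\langle x, n_x\rangle$. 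The Hessian cross term is then integrated by parts once more, using the PDE $\Delta u = c$ and the Neumann data, so as to re-express it purely in terms of $\int_K |\nabla u|^2$, $\int_K\|\nabla^2 u\|^2$ and boundary averages controlled by Cauchy--Schwarz, thereby eliminating any dependence on the outer radius of $K$.

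For the left-hand side I would combine two a-priori lower bounds: the pointwise Cauchy--Schwarz bound $\|\nabla^2 u\|^2 \geq (\Delta u)^2/n = c^2/n$, contributing the $\sqrt{n}$ factor, and the Poincar\'e inequality above, contributing the $C_{poin}(K)$ factor. A weighted combination of the two, in the spirit of Lemma \ref{estimate3-upd} specialized to $V\equiv 0$ and $k_2 = 0$, matched against the trace estimate of the previous paragraph, yields a constraint of the form $(1-p)(\sqrt{n}+1)C_{poin}(K) \leq r$, which rearranges to the claimed threshold; the $+1$ in $\sqrt{n}+1$ comes from the bulk $n|\nabla u|^2$ term in the divergence identity, while the $\sqrt{n}$ comes from the Cauchy--Schwarz bound on the Hessian.

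The main obstacle will be the trace step: the naive estimate of the cross term $\langle x,\nabla^2 u\cdot\nabla u\rangle$ via Cauchy--Schwarz introduces a factor of the outer radius $R$ of $K$, so obtaining a bound depending only on $r$ requires a careful integration by parts that exploits the PDE $\Delta u = c$, the Neumann boundary condition, and the oddness of $\nabla u$ to cancel the would-be $R$-sized contributions. Once that cancellation is achieved, the remaining balancing of bounds to produce the precise $\sqrt{n}+1$ coefficient is routine.
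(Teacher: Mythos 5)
There is a genuine gap in your argument, and it is precisely the step you flag as an ``obstacle.'' You estimate the trace term via $f^2=\langle\nabla u,n_x\rangle^2\leq|\nabla u|^2$ and then apply the divergence theorem to the vector field $|\nabla u|^2 x$. This accumulates \emph{two} factors of $1/r$ (one from $\langle x,n_x\rangle\geq r$ in the original trace integral, one from $\langle x,n_x\rangle\geq r$ when converting $\int_{\partial K}|\nabla u|^2\,d\sigma$ to an interior integral), and moreover produces the cross term $2\langle x,\nabla^2 u\,\nabla u\rangle$, whose naive bound involves the circumradius $R$. For Lebesgue measure there is no $\langle x,\nabla V\rangle\geq k_1|x|^2$ term to absorb it, and integrating it by parts simply undoes the divergence identity and returns the boundary integral you started with. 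But even if you could eliminate the $R$-dependence entirely, the $1/r^2$ factor together with the Poincar\'e step yields a constraint of the order $(1-p)\,n\,C_{poin}^2/r^2\lesssim 1$, i.e.\ the Kolesnikov--Milman threshold $p\geq 1-cn^{-3/2}$ in isotropic position, not the $p\geq 1-\frac{r}{C_{poin}(\sqrt n+1)}$ claimed. This is exactly the estimate the paper identifies in Remark~\ref{explain} as ``rougher.''

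The idea you are missing is that the non-negativity of $f$ buys a \emph{linear}, not quadratic, bound: since $\langle\nabla u,n_x\rangle=f\geq 0$, one has the pointwise estimate $f^2=\langle\nabla u,n_x\rangle^2\leq|\nabla u|\,\langle\nabla u,n_x\rangle=\langle|\nabla u|\nabla u,\,n_x\rangle$, so that
\[
\int_{\partial K}\frac{f^2}{\langle x,n_x\rangle}\,d\sigma\;\leq\;\frac1r\int_{\partial K}\langle|\nabla u|\nabla u,n_x\rangle\,d\sigma\;=\;\frac1r\int_K\operatorname{div}\bigl(|\nabla u|\nabla u\bigr),
\]
with only a single $1/r$. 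The resulting interior expression $\operatorname{div}(|\nabla u|\nabla u)=|\nabla u|\Delta u+\frac{\langle\nabla^2u\,\nabla u,\nabla u\rangle}{|\nabla u|}$ has a cross term bounded by $\|\nabla^2u\|\,|\nabla u|$ with no factor of $|x|$ whatsoever, so the circumradius problem never arises. From there the Poincar\'e inequality, a Cauchy split with $\alpha=C_{poin}^{-1}$, and $\|\nabla^2u\|^2\geq(\Delta u)^2/n$ yield exactly the advertised threshold. Your scaffolding (even Neumann solution, $Var(\Delta u)=0$, oddness of $\nabla u$, Poincar\'e on $\nabla u$, the final $\sqrt n+1$ bookkeeping) is correct and matches the paper, but the central estimate---the one that makes the improvement possible and that is the actual content of the proposition---is absent.
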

\begin{proof} We may assume that $f$ is not identically zero, and thus by continuity, $\int_{\partial K} f>0.$ Without loss of generality, by scaling, we may assume that 
\begin{equation}\label{wlog}
\int_{\partial K} f=|K|.
\end{equation}

Let $u:K\rightarrow\R$ be such a function that
$$\langle \nabla u,n_x\rangle=f(x)\,\,\,\,\forall\,x\in\partial K$$
and
$$\Delta u=1.$$
By (\ref{wlog}) this system is compatible, and it has a solution. Further, by the standard regularity results (see, e.g. Evans \cite{evans}), this solution is twice differentiable. Moreover, since $K$, $F$ and $f$ are even, the solution is even as well \cite{evans}.

We estimate, using $|\langle \nabla u, n_x\rangle| \le |\nabla u|$,
\begin{equation}\label{John}
\int_{\partial K} \frac{\langle \nabla u, n_x\rangle^2}{\langle x,n_x\rangle}\leq \frac{1}{r}\int_{\partial K} {\langle |\nabla u| \nabla u, n_x\rangle}=\frac{1}{r}\int_K div(|\nabla u|\nabla u),
\end{equation}
where in the last line we used the divergence theorem, and we also used the fact that $\langle \nabla u, n_x\rangle=f\geq 0$, and hence $|\langle\nabla u, n_x\rangle|=\langle\nabla u, n_x\rangle$. We write
$$div(|\nabla u|\nabla u)=|\nabla u|\Delta u+\frac{1}{|\nabla u|}\langle \nabla^2 u{\nabla} u,\nabla u\rangle=|\nabla u|+\frac{1}{|\nabla u|}\langle \nabla^2 u{\nabla} u,\nabla u\rangle,$$
where we used that $\Delta u=1$. Next, we estimate
$$
\frac{1-p}{|K|}\int_{\partial K} \frac{\langle \nabla u, n_x\rangle^2}{\langle x,n_x\rangle}\leq \frac{1-p}{r}\int |\nabla u|+\frac{1}{2}\left(\frac{1}{\alpha} ||\nabla^2 u||^2+\alpha |\nabla u|^2\right),
$$
for any $\alpha>0$. By Cauchy's inequality, the above is bounded by
\begin{equation}\label{bnd2-1}
\frac{1-p}{r}\sqrt{\int |\nabla u|^2}+\frac{1-p}{2r}\int\left(\frac{1}{\alpha} ||\nabla^2 u||^2+\alpha |\nabla u|^2\right).
\end{equation}
Lastly, since $u$ is an even function,
$$\int |\nabla u|^2\leq C^2_{poin} \int ||\nabla^2 u||^2,$$
where $C_{poin}=C_{poin}(K)$ is the Poincare constant of $K$. Therefore, selecting $\alpha=C^{-1}_{poin}$, we get that (\ref{bnd2-1}) is bounded by
\begin{equation}\label{bnd3-1}
\frac{(1-p)C_{poin}}{r}\sqrt{\int ||\nabla^2 u||^2}+\frac{(1-p)C_{poin}}{r}\int ||\nabla^2 u||^2.
\end{equation}
Since $\Delta u=1$, we have $Var(\Delta u)=0$, and using (\ref{bnd2-1}) we see, that our goal is
\begin{equation}\label{goal-LEB}
\sqrt{\int ||\nabla^2 u||^2}+\int ||\nabla^2 u||^2\leq \frac{r}{(1-p)C_{poin}}\int ||\nabla^2 u||^2.
\end{equation}
As $||\nabla^2 u||^2\geq \frac{1}{n}(\Delta u)^2=\frac{1}{n}$, we see that (\ref{goal-LEB}) is indeed correct whenever
$$p\geq \max\left(1-\frac{r}{C_{poin}(K)(\sqrt{n}+1)},0\right). \text{   } $$

\end{proof}

\textbf{Proof of Theorem \ref{lebesgue}. }
Consider $K$ a symmetric $C^2-$smooth convex body in $\mathbb{R}^n$. Choose $T \in GL_n$ such that $TK$ is in isotropic position. For isotropic convex bodies, $r\geq 1+o(1)$, as shown by Kannan, Lovasz, and Simonovits \cite{KLS}, and $C_{poin}(TK) \leq cn^{\frac{1}{4}}$, as shown by Lee, Vempala \cite{LV}. Thus the conclusion of Proposition \ref{propnew} holds for $TK$ with $p \geq 1 - Cn^{-0.75}$. From Proposition \ref{raileyprop}, we can write the conclusion of Proposition 7.1 as \begin{align*}
    \int_{\partial TK}H_xu_n^2 -2\langle \nabla_{\partial TK} u, \nabla_{\partial TK}u_n\rangle + \langle \mbox{\rm{II}}\nabla_{\partial TK}u, \nabla_{\partial TK}u\rangle + (1-p)\frac{u_n^2}{\langle x,n_x\rangle}dx &\le \int_{TK} (Lu)^2 dx.
\end{align*} 
Here, the last term is simply $|TK|.$

Therefore, since the quadratic form $\rm{II}$ is positive definite, for any nonnegative even $C^1-$smooth $f: \partial TK \to \mathbb{R}$, we have $$\int_{\partial TK} H_x f^2 - \langle \mbox{\rm{II}}^{-1}\nabla_{\partial TK} f, \nabla_{\partial TK} f\rangle + (1-p)\frac{f^2}{\langle x, n_{x}\rangle} dx - \frac{1}{|TK|}\left(\int_{\partial TK}f dx \right)^2 \le 0.$$ By the argument of Lemma \ref{local}, this statement is equivalent to \begin{align*}
    \frac{d^2}{d\varepsilon^2}\log |TK+_p \varepsilon f| \le 0
\end{align*} for each nonnegative $C^1-$smooth $f: S^{n-1} \to \mathbb{R}$. Therefore, we also have \begin{align*}
    \frac{d^2}{d\varepsilon^2}\log |T^{-1}(TK+_p \varepsilon f)| \le 0
\end{align*} for each nonnegative even $C^1-$smooth $f: S^{n-1} \to \mathbb{R},$ in view of the fact that
$$|T^{-1}(TK+_p \varepsilon f)| = |\det T|^{-1}|TK+_p \varepsilon f|.$$ 
Following the arguments in Section 5 of Kolesnikov and Milman \cite{KolMil}, let us define $f_{T^{-1}}: S^{n-1} \to \mathbb{R}$ by $$f_{T^{-1}}(\theta) = f\left(\frac{(T^{-1})^t \theta}{|(T^{-1})^t \theta|} \right)|(T^{-1})^t \theta|.$$ Then, for small enough $\varepsilon>0$ and all $\theta \in S^{n-1}$, we have \begin{align*}
    h_{K +_p \varepsilon f_{T^{-1}}}(\theta) &= (h_K^p(\theta) + \varepsilon f_{T^{-1}}^p(\theta))^{\frac{1}{p}} \\ &= \left(h_{TK}^p\left(\frac{(T^{-1})^t \theta}{|(T^{-1})^t \theta|} \right) + \varepsilon f^p\left(\frac{(T^{-1})^t \theta}{|(T^{-1})^t \theta|} \right) \right)^{\frac{1}{p}}|(T^{-1})^t \theta| \\ &= (h_{TK}^p + \varepsilon f^p)^{\frac{1}{p}}\left(\frac{(T^{-1})^t \theta}{|(T^{-1})^t \theta|} \right)|(T^{-1})^t \theta| \\ &= h_{TK +_p \varepsilon f}((T^{-1})^t\theta) \\ &= h_{T^{-1}(TK +_p \varepsilon f)}(\theta),
\end{align*} 
where in the last passage we used the fact that for any linear map $A,$ any vector $y$ and any convex body $L,$
$$h_{AL}(y)=\sup_{z\in AL}\langle z,y\rangle=\sup_{x\in L}\langle x, (A^{-1})^ty\rangle=h_L((A^{-1})^ty).$$
It follows that \begin{align*}
    \frac{d^2}{d\varepsilon^2}\log |K +_p \varepsilon f_{T^{-1}}| &\le 0
\end{align*} for each nonnegative even $C^1-$smooth $f: S^{n-1} \to \mathbb{R}$. Since $f \to f_{T^{-1}}$ is a bijection on the set of nonnegative even $C^{-1}-$smooth functions on $S^{n-1}$, we have that \begin{align*}
    \frac{d^2}{d\varepsilon^2}\log |K +_p \varepsilon f| &\le 0
\end{align*} for each nonnegative even $C^1-$smooth $f: S^{n-1} \to \mathbb{R}$.


To finish, we may apply the procedure of Theorem 3.1. While our local inequality only holds for $f$ nonnegative, this is sufficient to conclude the global inequality for $K \subset L$. To see this, take our approximations $K_{\varepsilon}, L_{\varepsilon}$ in Lemma 3.6 such that $K_{\varepsilon} \subset K$ and $L \subset L_{\varepsilon}$ and recall that our choice of $f$ in the local inequality is \begin{align*}
    f_{\varepsilon}(x) &= \frac{1}{p}h_{K_{\varepsilon}}(n_x)\left(\left(\frac{h_{L_{\varepsilon}}(n_x)}{h_{K_{\varepsilon}}(n_x)} \right)^p - 1 \right),
\end{align*} which is non-negative. It remains to recall that $p-$Minkowski interpolations preserve inclusions.
 $\square$

\begin{remark}\label{explain}
We note that Kolesnikov and Milman \cite{KolMilsupernew} used the estimate 
$$\int_{\partial K} \frac{\langle \nabla u, n_x\rangle^2}{\langle x,n_x\rangle}\leq \frac{1}{r^2}\int_{\partial K} {\langle |\nabla u|^2 x, n_x\rangle}=\frac{1}{r^2}\int_K div(|\nabla u|^2x),$$
in place of (\ref{John}), which is rougher, and hence leads to the rougher bound $p\geq 1-cn^{-1.5}$. However, (\ref{John}) only works for non-negative functions, hence our result is only valid in the partial case $K\subset L.$ We note also that the form of the inequality which we prove is not invariant under the transformation $L\rightarrow tL,$ unlike the additive version of the conjecture, and hence we cannot assume that $K\subset L$ without loss of generality.  
\end{remark}

\section{Proof of Theorem \ref{main}.}

For brevity, we will sometimes write $C_{poin}=C_{poin}(K,\mu),$ for the Poincare constant of the restriction of $\mu$ on $K$ (which was defined in Section 6.)

\begin{proposition}\label{keyprop-1}
Let $K$ be a convex set in $\R^n$ containing $rB_2^n$. Then for every $f\in C^1(\partial K)$ there exists $u\in C^2(K)$ such that for each $x\in\partial K,$
$$\langle \nabla u,n_x\rangle=f(x),$$
and
\begin{equation}\label{goal-prop1}
\int ||\nabla^2 u||^2+\langle \nabla^2V\nabla u,\nabla u\rangle \geq Var(Lu)+\frac{q}{n}\left(\int Lu\right)^2+
\end{equation}
$$
\frac{1-p}{\mu(K)}\int_{\partial K} \frac{\langle \nabla u, n_x\rangle^2}{\langle x, n_x\rangle}d\mu_{\partial K},
$$
provided that at least one of the two conditions hold: 
\begin{itemize}
\item $k_1\in [\frac{1}{n},1]$ and
$$(1-p)\frac{1+n}{r^2}+q(1+k_2)(1+k_1)\leq 2k_1;$$
\item for all $k_1\geq 0,$
$$\begin{cases}
q\left(k_1^2+k_1k_2-(nk_1+k_2)\frac{1-p}{r^2}\right)\leq k_1^2+n\left(\frac{1-p}{r^2}\right)^2-\frac{1-p}{r^2}(n+1)k_1;\\
\frac{1-p}{r^2}\leq \frac{k_1}{n}.
\end{cases}$$
\end{itemize}
\end{proposition}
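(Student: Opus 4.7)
The plan is, given $f\in C^1(\partial K)$, to take $u\in C^2(K)$ as the even solution of the weighted Neumann problem
$$Lu=c\ \text{in}\ K,\qquad \langle\nabla u,n_x\rangle=f\ \text{on}\ \partial K,\qquad c:=\frac{1}{\mu(K)}\int_{\partial K}f\,d\mu_{\partial K},$$
the constant $c$ being forced by the compatibility identity (\ref{diverg}). This makes $Var(Lu)=0$ and $\int Lu=c$. Since $\nabla^2V\ge k_1 Id$ gives $\langle\nabla^2V\nabla u,\nabla u\rangle\ge k_1|\nabla u|^2$, the target inequality (\ref{goal-prop1}) reduces to
$$\int\|\nabla^2u\|^2+k_1|\nabla u|^2\ \ge\ \frac{q}{n}c^2+\frac{1-p}{\mu(K)}\int_{\partial K}\frac{f^2}{\langle x,n_x\rangle}\,d\mu_{\partial K}.$$

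The crux is converting the boundary term into interior data. Combining $\langle x,n_x\rangle\ge r$ and $|f|\le|\nabla u|$ on $\partial K$ with the weighted divergence theorem applied to the field $|\nabla u|^2 x$ gives
$$\frac{1}{\mu(K)}\int_{\partial K}\frac{f^2}{\langle x,n_x\rangle}d\mu_{\partial K}\le\frac{1}{r^2}\int\bigl(2\langle\nabla^2u\nabla u,x\rangle+n|\nabla u|^2-|\nabla u|^2\langle x,\nabla V\rangle\bigr).$$
Writing $\nabla V(x)=\int_0^1\nabla^2V(tx)x\,dt$ and using $\nabla^2V\ge k_1 Id$ yields the pointwise bound $\langle x,\nabla V(x)\rangle\ge k_1|x|^2$, while Cauchy--Schwarz combined with AM--GM with weight $k_1$ gives
$$2\langle\nabla^2u\nabla u,x\rangle\le\frac{\|\nabla^2u\|^2}{k_1}+k_1|\nabla u|^2|x|^2.$$
The two $|x|^2$ contributions cancel exactly, leaving the clean estimate
$$\frac{1}{\mu(K)}\int_{\partial K}\frac{f^2}{\langle x,n_x\rangle}d\mu_{\partial K}\ \le\ \frac{1}{r^2}\Bigl(\frac{1}{k_1}\int\|\nabla^2u\|^2+n\int|\nabla u|^2\Bigr).$$

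Substituting back, the problem reduces to showing $A\int\|\nabla^2u\|^2+B\int|\nabla u|^2\ge\frac{q}{n}c^2$, where $A:=1-\frac{1-p}{k_1r^2}$ and $B:=k_1-\frac{n(1-p)}{r^2}$. For the second bullet the hypothesis $\frac{1-p}{r^2}\le\frac{k_1}{n}$ gives $A,B>0$; I apply Lemma \ref{estimate1-upd} pointwise with these parameters and integrate using Jensen's inequality together with Lemma \ref{nabla V}'s bound $\int|\nabla V|^2\le k_2 n$, producing the lower bound $ABc^2/\bigl(n(Ak_2+B)\bigr)$. Requiring this to exceed $\frac{q}{n}c^2$ and plugging in the formulas for $A,B$ collapses, after algebra, to precisely the second-bullet condition. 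For the first bullet $B$ may be negative, so I instead invoke Lemma \ref{estimate3-upd}: its hypothesis $A\ge B$ holds when $k_1\in[1/n,1]$ (because then both $1-k_1$ and $n-\frac{1}{k_1}$ are nonnegative), and its condition $AC_{poin}^{-2}+B\ge 0$ follows from the Brascamp--Lieb bound $C_{poin}^{-2}(K,\mu)\ge k_1$ combined with the first-bullet inequality. Since $A\ge B$ the ratio $(AC_{poin}^{-2}+B)/(1+C_{poin}^{-2})$ is monotone increasing in $C_{poin}^{-2}$, so its worst case is at $C_{poin}^{-2}=k_1$, where the required inequality simplifies to $2k_1-\frac{(n+1)(1-p)}{r^2}\ge q(1+k_1)(1+k_2)$, the first bullet.

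The main obstacle is the boundary-term estimate; without the pointwise cancellation of $|x|^2$ between the Cauchy--Schwarz residue for $2\langle\nabla^2u\nabla u,x\rangle$ and the convexity-derived bound $\langle x,\nabla V\rangle\ge k_1|x|^2$, the circumradius of $K$ would pollute the final conditions. Once this cancellation is in place, the remainder is bookkeeping around Lemmas \ref{estimate1-upd} and \ref{estimate3-upd}.
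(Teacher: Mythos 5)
Your proposal reproduces the paper's proof essentially verbatim: the same Neumann system with $Lu=\mathrm{const}$, the same weighted divergence identity applied to $|\nabla u|^2 e^{-V}x$, the same choice $\alpha=k_1$ making the $|x|^2$ terms cancel, and then the same case split invoking Lemma \ref{estimate3-upd} (via monotonicity in $C_{poin}^{-2}$ and the Brascamp--Lieb bound $C_{poin}^{-2}\geq k_1$) for the first bullet and Lemma \ref{estimate1-upd} plus Jensen and Lemma \ref{nabla V} for the second. The algebra you indicate does indeed collapse to the stated conditions, so this is the paper's argument.
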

\begin{proof} Let $u$ be the solution of the Neumann system
$$\langle \nabla u,n_x\rangle=f(x),$$
and
$$Lu=\frac{\int_{\partial K}f d\mu_{\partial K}}{\mu(K)}.$$
Note that 
\begin{equation}\label{var-prop1}
Var(Lu)=0.
\end{equation}
Observing that $\langle x, n_x\rangle\geq r,$ $\langle \nabla u, n_x\rangle\leq |\nabla u|$ and using the divergence theorem (similarly to the argument in Remark \ref{explain}), we estimate
$$\int_{\partial K} \frac{\langle \nabla u, n_x\rangle^2}{\langle x, n_x\rangle}d\mu_{\partial K}\leq \frac{1}{r^2}\int_K div(|\nabla u|^2e^{-V}x)dx.$$
We write
\begin{equation}\label{div-prop1}
div(|\nabla u|^2e^{-V}x)=e^{-V}(|\nabla u|^2\left(n-\langle x,\nabla V\rangle\right)+2\langle \nabla^2 u\nabla u,x\rangle),
\end{equation}
and note that 
$$\langle x,\nabla V\rangle\geq k_1|x|^2.$$
Indeed, to see this, consider a function $g(t)=\langle x,\nabla V(tx)\rangle.$ By the intermediate value theorem, $g(1)-g(0)=g'(\xi)$, for some $\xi\in [0,1]$. Observe that $g'(\xi)=\langle \nabla^2 V(\xi x)x,x\rangle\geq k_1 |x|^2$, by our assumption that $\nabla^2 V\geq k_1 \rm{Id}.$ It remains to note that $g(0)=0$ since $V$ is a smooth convex even function.

Using the inequality $2ab\leq \frac{a^2}{t}+tb^2$, for all $t>0$, estimating the operator norm of $\nabla^2u$ with the Hilbert-Schmidt norm, and applying the Cauchy inequality, for any $\alpha>0$, we estimate (\ref{div-prop1}) by
$$
e^{-V}\left(|\nabla u|^2\left(n-(k_1-\alpha)|x|^2\right)+\frac{1}{\alpha}||\nabla^2 u||^2\right).
$$
Therefore, (\ref{goal-prop1}) will follow in case we verify
$$\int ||\nabla^2 u||^2+k_1|\nabla u|^2 \geq \frac{q}{n}+\frac{1-p}{r^2}\int |\nabla u|^2\left(n-(k_1-\alpha)|x|^2\right)+\frac{1}{\alpha}||\nabla^2 u||^2.
$$
Denote $\theta=\frac{1-p}{r^2}$. We let $\alpha=k_1$, and the inequality becomes
$$\int a||\nabla^2 u||^2+b|\nabla u|^2 \geq \frac{q}{n},$$
where 
$$a=1-\frac{\theta}{k_1},$$
$$b=k_1-\theta n.$$
\textbf{Case 1.} Suppose $k_1\in [\frac{1}{n},1].$ In this case, $a\geq b,$ and we are in a position to employ Lemma \ref{estimate3-upd}, provided that we also verify the condition $C^{-2}_{poin}(K,\mu)a+b\geq 0.$ The restriction on $q$ and $\theta$ then reads
$$\frac{C^{-2}_{poin}(K,\mu)a+b}{\left(1+C^{-2}_{poin}(K,\mu)\right)(1+k_2)n}\geq \frac{q}{n},$$
Recall that the Brascamp-Lieb inequality yields that for any convex set $K,$ we have $C^{-2}_{poin}(K,\mu)\geq k_1$ (see \cite{BrLi}.) Therefore, the inequality amounts to
\begin{equation}\label{ass3}
(1-p)\frac{1+n}{r^2}+q(1+k_2)(1+k_1)\leq 2k_1.
\end{equation}
\textbf{Case 2.} Suppose $k_1\in [0,\frac{1}{n}).$ The Lemma \ref{estimate3-upd} is not applicable, and therefore we employ Lemma \ref{estimate1-upd}, which yields, together with Jensen's inequality, that 
$$\int a||\nabla^2 u||^2+b|\nabla u|^2 \geq \frac{1}{n}\cdot\frac{ab}{ak_2+b},$$
provided that $a\geq 0$ and $b\geq 0.$ With our choice of parameters, the latter assumption boils down to
\begin{equation}\label{assump-case2}
\frac{1-p}{r^2}\leq \frac{k_1}{n},
\end{equation}
which implies that $a\geq 0$, and the restriction on $p$ and $q$ becomes
$$q\leq \frac{ab}{ak_2+b}=\frac{(1-\frac{\theta}{k_1})(k_1-\theta n)}{(1-\frac{\theta}{k_1})k_2+k_1-\theta n},$$
or equivalently, since the denominator is non-negative in view of (\ref{assump-case2}),
$$q\left(k_1^2+k_1k_2-(nk_1+k_2)\frac{1-p}{r^2}\right)\leq k_1^2+n\left(\frac{1-p}{r^2}\right)^2-\frac{1-p}{r^2}(n+1)k_1.$$
\end{proof}

Theorem \ref{main} follows from Proposition \ref{keyprop-1} and Lemma \ref{pqconj-byparts}.

\begin{remark} More generally, we get the result under the assumptions:
$$(1-p)\frac{1/k_1-n}{r^2}\leq 1-k_1$$
and
$$(1-p)\frac{C^{-2}_{poin}/k_1+n}{r^2}+q(1+k_2)(1+C^{-2}_{poin})\leq k_1+C^{-2}_{poin}$$
Alternatively, in case $K$ and $L$ are additionally contained in $R B_2^n$, and assuming that $R\leq \frac{C^{-1}_{poin}(K,\mu)}{k_1}$, we get the result under the assumptions
$$(1-p)\frac{2RC^{-1}_{poin}+n-k_1 R^2}{r^2}+q(1+k_2)(1+C^{-2}_{poin})\leq k_1+C^{-2}_{poin}.$$
and
$$(1-p)\frac{k_1 R^2-n}{r^2}\leq 1-k_1.$$
We skip the computation for the sake of brevity.
\end{remark}

\section{Proof of Proposition \ref{prop-main}.}

\begin{proposition}\label{keyprop-2}
Let $K$ be a symmetric convex set in $\R^n$ containing $rB_2^n$. Then for every non-negative $f\in C^1(\partial K)$ there exists $u\in C^2(K)$ such that for each $x\in\partial K,$
$$\langle \nabla u,n_x\rangle=f(x),$$
and
\begin{equation}\label{goal}
\int ||\nabla^2 u||^2+\langle \nabla^2V\nabla u,\nabla u\rangle \geq \int (Lu)^2-\left(\int Lu\right)^2+\frac{q}{n}\left(\int Lu\right)^2+
\end{equation}
$$\frac{1-p}{\mu(K)}\int_{\partial K} \frac{\langle \nabla u, n_x\rangle^2}{\langle x, n_x\rangle}d\mu_{\partial K},
$$
whenever 
$$(1-p)\frac{2\sqrt{n}\sqrt{1+k_2}\sqrt{1+k_1}+\sqrt{k_1}}{2r}+q(1+k_2)(1+k_1)\leq 2k_1$$
and
$$k_1\leq 1.$$
\end{proposition}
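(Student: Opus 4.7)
The plan is to mirror the proof of Proposition \ref{keyprop-1}, except that we exploit the non-negativity of $f$ to replace the bound $\int_{\partial K}\tfrac{\langle\nabla u,n_x\rangle^2}{\langle x,n_x\rangle}d\mu_{\partial K} \leq \tfrac{1}{r^2}\int_K \mathrm{div}(|\nabla u|^2 e^{-V}x)\,dx$ used there by the sharper
$$\int_{\partial K}\frac{\langle\nabla u,n_x\rangle^2}{\langle x,n_x\rangle}d\mu_{\partial K} \leq \frac{1}{r}\int_{\partial K}\langle |\nabla u|\nabla u, n_x\rangle\, e^{-V}\, dH_{n-1} = \frac{1}{r}\int_K \mathrm{div}(|\nabla u|\nabla u\, e^{-V})\,dx,$$
which uses $f = \langle\nabla u,n_x\rangle\geq 0$ (so that $\langle\nabla u,n_x\rangle^2 \leq \langle\nabla u,n_x\rangle\cdot|\nabla u|$) together with $\langle x,n_x\rangle\geq r$. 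This is the ``John-type'' estimate at the heart of the proof of Theorem \ref{lebesgue} and is exactly the reason we gain a factor of $1/r$ in place of $1/r^2$.

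Pick $u\in C^2(K)$ even and solving $\langle \nabla u, n_x\rangle = f$ on $\partial K$ with $Lu\equiv \tfrac{1}{\mu(K)}\int_{\partial K}f\,d\mu_{\partial K}$ on $K$; rescale so $Lu\equiv 1$, hence $Var(Lu)=0$. A direct computation gives
$$\mathrm{div}(|\nabla u|\nabla u\, e^{-V}) = e^{-V}\Bigl(\frac{\langle \nabla^2 u\,\nabla u, \nabla u\rangle}{|\nabla u|}+|\nabla u|Lu\Bigr) = e^{-V}\Bigl(\frac{\langle \nabla^2 u\,\nabla u, \nabla u\rangle}{|\nabla u|}+|\nabla u|\Bigr).$$
Applying Cauchy-Schwarz and Jensen yields $\int \tfrac{\langle\nabla^2 u \,\nabla u,\nabla u\rangle}{|\nabla u|}\leq \sqrt{AB}$ and $\int|\nabla u|\leq \sqrt{B}$, where $A:=\int||\nabla^2 u||^2$ and $B:=\int|\nabla u|^2$. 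Combining these with the convexity bound $\langle \nabla^2 V\,\nabla u,\nabla u\rangle\geq k_1 |\nabla u|^2$ reduces the desired inequality (\ref{goal}) to
$$A + k_1 B \geq \frac{q}{n} + \frac{1-p}{r}\bigl(\sqrt{AB}+\sqrt{B}\bigr).$$

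To absorb the cross-terms into the LHS, apply weighted AM-GM with parameters $s,t>0$: $\sqrt{AB}\leq \tfrac{sA}{2}+\tfrac{B}{2s}$ and $\sqrt{B}\leq \tfrac{tB}{2}+\tfrac{1}{2t}$. The inequality becomes $aA+bB \geq \tfrac{q}{n}+\tfrac{1-p}{2rt}$ with $a=1-\tfrac{(1-p)s}{2r}$ and $b=k_1-\tfrac{1-p}{2r}\bigl(\tfrac{1}{s}+t\bigr)$; the assumption $k_1\leq 1$ guarantees $a\geq b$, so Lemma \ref{estimate3-upd} applies. Together with the Brascamp-Lieb estimate $C_{poin}^{-2}(K,\mu)\geq k_1$ and the monotonicity argument used in Proposition \ref{keyprop-1}, it yields $aA+bB \geq \tfrac{ak_1+b}{(1+k_1)(1+k_2)n}$. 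It remains to optimize over $s,t$: the minimum of $sk_1+\tfrac{1}{s}$ is $2\sqrt{k_1}$ attained at $s=1/\sqrt{k_1}$, and the minimum of $t + \tfrac{n(1+k_1)(1+k_2)}{t}$ is $2\sqrt{n(1+k_1)(1+k_2)}$ attained at $t=\sqrt{n(1+k_1)(1+k_2)}$. Substituting these optima produces the stated sufficient condition.

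\textbf{Main obstacle.} The principal subtlety is the careful bookkeeping required to reproduce the precise coefficient $\tfrac{2\sqrt{n(1+k_1)(1+k_2)}+\sqrt{k_1}}{2r}$ from the AM-GM split and the joint optimization in $(s,t)$, while verifying the side-conditions ($a,b\geq 0$ and $a\geq b$) needed to invoke Lemma \ref{estimate3-upd} — the hypothesis $k_1\leq 1$ is exactly what ensures the latter. Once Proposition \ref{keyprop-2} is in hand, Proposition \ref{prop-main} follows by combining it with Lemma \ref{pqconj-byparts} applied to the canonical choice $f(x)=\tfrac{1}{p}h_K(n_x)\bigl((h_L(n_x)/h_K(n_x))^p-1\bigr)$, which is non-negative precisely because the hypothesis $K\subset L$ forces $h_L\geq h_K$.
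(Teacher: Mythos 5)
Your proof mirrors the paper's almost exactly: the Neumann system, the ``John-type'' bound $\int_{\partial K}\langle\nabla u,n_x\rangle^2/\langle x,n_x\rangle\,d\mu_{\partial K}\leq r^{-1}\int_K\mathrm{div}(|\nabla u|\,\nabla u\,e^{-V})\,dx$ (which is exactly where $f\geq 0$ enters), the divergence identity, and the reduction to $aA+bB\geq c$ followed by Lemma \ref{estimate3-upd} and the Brascamp--Lieb bound $C_{poin}^{-2}\geq k_1$. The only distinction is cosmetic: you apply Cauchy--Schwarz/Jensen at the integrated level to produce $\sqrt{AB}$ and $\sqrt{B}$ and then split algebraically with parameters $s,t$, whereas the paper splits pointwise with parameters $\alpha,\beta$ before integrating; under $s=1/\beta,\ t=\alpha$ the two yield identical $a,b,c$ and identical final conditions after optimizing. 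One claim is stated too loosely and deserves flagging: at the optimal $s=1/\sqrt{k_1}$ one has $a-b=(1-k_1)+\tfrac{1-p}{2r}\bigl(\sqrt{k_1}+t-1/\sqrt{k_1}\bigr)$, and since $t-\bigl(1/\sqrt{k_1}-\sqrt{k_1}\bigr)$ can be negative for small $k_1$, the hypothesis $k_1\leq1$ alone does not force $a\geq b$; the paper records this as a separate condition (and its stated ``automatic'' check contains a sign slip of the same kind), so this step does need genuine verification, though it is harmless in the Gaussian application $k_1=1$ where $C_{poin}\leq 1$.
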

\begin{proof} Let $u$ be the solution of the Neumann system
$$\langle \nabla u,n_x\rangle=f(x),$$
and
$$Lu=\frac{\int_{\partial K}f d\mu_{\partial K}}{\mu(K)}.$$
Note that 
\begin{equation}\label{var}
Var(Lu)=0.
\end{equation}

Observing that $\langle x, n_x\rangle\geq r,$ $\langle \nabla u, n_x\rangle\leq |\nabla u|$ and using the divergence theorem, we estimate
$$\int_{\partial K} \frac{\langle \nabla u, n_x\rangle^2}{\langle x, n_x\rangle}d\mu_{\partial K}\leq \frac{1}{r}\int_K div(|\nabla u|e^{-V}\nabla u)dx.$$
We observe that
\begin{equation}\label{div}
div(|\nabla u|e^{-V}\nabla u)=e^{-V}\left(|\nabla u|Lu+\frac{1}{|\nabla u|}\langle \nabla^2 u\nabla u,\nabla u\rangle\right).
\end{equation}
Using the inequalities $2ab\leq \frac{a^2}{t}+tb^2$, for all $t>0$, estimating the operator norm of $\nabla^2u$ with the Hilbert-Schmidt norm, and applying the Cauchy's inequality, for any $\alpha,\beta>0$, we estimate (\ref{div}) by
$$
\frac{1}{2}e^{-V}\left(|\nabla u|^2(\alpha+\beta)+\frac{(Lu)^2}{\alpha}+\frac{||\nabla^2 u||^2}{\beta}\right).
$$
Without loss of generality, since (\ref{goal}) is scale-invariant, we may assume that 
$$Lu=\frac{\int_{\partial K}f d\mu_{\partial K}}{\mu(K)}=1.$$
Since $\nabla^2 V\geq k_1 Id$, we have $\langle \nabla^2 V\nabla u,\nabla u\rangle\geq k_1 |\nabla u|^2$. Therefore, in view of (\ref{var}), the inequality (\ref{goal}) will follow from
\begin{equation}\label{goal2}
\int ||\nabla^2 u||^2+k_1|\nabla u|^2 \geq \frac{q}{n}+ \frac{1-p}{2r}\int |\nabla u|^2(\alpha+\beta)+\frac{1}{\alpha}+\frac{||\nabla^2 u||^2}{\beta}.
\end{equation}
In other words, we need to show 
\begin{equation}\label{goal3}
\int a||\nabla^2 u||^2+b|\nabla u|^2-c \geq 0,
\end{equation}
where, letting $\theta=\frac{1-p}{2r}$, we write 
$$a=1-\frac{\theta}{\beta},$$
$$b=k_1-\theta(\alpha+\beta),$$
$$c=\frac{\theta}{\alpha}+\frac{q}{n}.$$
It remains to apply Lemma \ref{estimate3-upd}, and the conditions on $\theta, q$ become:
$$
\frac{C^{-2}_{poin}(K,\mu)\left(1-\frac{\theta}{\beta}\right)+k_1-\theta(\alpha+\beta)}{\left(1+C^{-2}_{poin}(K,\mu)\right)(1+k_2)n}\geq \frac{\theta}{\alpha}+\frac{q}{n};$$
$$(1-\frac{\theta}{\beta})C^{-2}_{poin}(K)+k_1-\theta(\alpha+\beta)\geq 0;$$
$$1-\frac{\theta}{\beta}\geq k_1-\theta(\alpha+\beta).$$
Letting $\alpha=\sqrt{n}\sqrt{1+k_2}\sqrt{1+C^{-2}_{poin}}$ and $\beta=C^{-1}_{poin}$, we arrive at
$$(1-p)\frac{\sqrt{n}\sqrt{1+k_2}\sqrt{1+C^{-2}_{poin}}+C^{-1}_{poin}}{r}+q(1+k_2)(1+C^{-2}_{poin})\leq k_1+C^{-2}_{poin}$$
and
$$(1-p)\frac{C_{poin}-C^{-1}_{poin}-\sqrt{n}\sqrt{1+k_2}\sqrt{1+C^{-2}_{poin}}}{2r}\leq 1-k_1.$$
Recall that $C^{-2}_{poin}\geq k_1,$ and thus we could replace $C^{-1}_{poin}$ with $\sqrt{k_1}$ in the statement of Lemma \ref{estimate3-upd}, which would transform the above restrictions into 

$$(1-p)\frac{\sqrt{n}\sqrt{1+k_2}\sqrt{1+k_1}+\sqrt{k_1}}{r}+q(1+k_2)(1+k_1)\leq 2k_1$$
and
$$(1-p)\frac{\sqrt{k_1}-1/\sqrt{k_1}-\sqrt{n}\sqrt{1+k_2}\sqrt{1+k_1}}{2r}\leq 1-k_1.$$

The second of the required restrictions holds under the assumptions of the present Proposition, in view of the fact that $k_1\leq 1$, as in this case the left hand side is negative and the right hand side is non-negative. The first condition was assumed explicitly.
\end{proof}

Proposition \ref{prop-main} follows from Proposition \ref{keyprop-2} and Lemma \ref{pqconj-byparts}, in view of the fact that interpolations preserve inclusions.

\begin{remark}
More generally, we get the conclusion under the assumptions
$$(1-p)\frac{2\sqrt{n}\sqrt{1+k_2}\sqrt{1+C^{-2}_{poin}}+C^{-1}_{poin}}{2r}+q(1+k_2)(1+C^{-2}_{poin})\leq k_1+C^{-2}_{poin}$$
and
$$(1-p)\frac{C_{poin}-C^{-1}_{poin}-\sqrt{n}\sqrt{1+k_2}\sqrt{1+C^{-2}_{poin}}}{2r}\leq 1-k_1.$$
\end{remark}

\section{The $(p,p)$-Brunn-Minkowski inequality for dilates of symmetric convex sets in the case of the Gaussian measure.}

In this section we prove Theorem \ref{dilates}. Let $\gamma$ be the Gaussian measure, and fix $K$ to be an arbitrary convex set with the Gaussian barycenter at the origin. Denote
$$\int:=\frac{1}{\gamma(K)}\int_K d\gamma(x).$$

First, we recall
\begin{lemma}[Cordero-Erasquin, Fradelizi, Maurey \cite{CFM}]\label{bconj-anal}
$$\int |x|^4-\left(\int |x|^2\right)^2\leq 2\int |x|^2.$$
\end{lemma}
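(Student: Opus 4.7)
I would prove Lemma \ref{bconj-anal} via the optimal transport approach of Cordero-Erasquin, Fradelizi, and Maurey. Let $T\colon\R^n\to K$ be the Brenier map with $T_\#\gamma=\gamma|_K$, i.e.\ $T=\nabla\phi$ for a convex $\phi$. Since $\gamma|_K$ has log-density $-V$ with $V=\tfrac12|x|^2+\infty\cdot\mathbf{1}_{K^c}$, a convex function satisfying $\nabla^2 V\succeq I$ in the interior of $K$, Caffarelli's contraction theorem yields $\|DT(x)\|_{\mathrm{op}}\le 1$ for a.e.\ $x$. In the symmetric case $K=-K$ (which is the relevant one in the application of the lemma), uniqueness of the Brenier map combined with the symmetry of both $\gamma$ and $\gamma|_K$ forces $T$ to be odd, so $f(x):=|T(x)|^2$ is an even function on $\R^n$.

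The crucial ingredient is the improved Poincar\'e inequality for even functions on the Gaussian: since the Hermite spectrum of the Ornstein-Uhlenbeck operator restricted to the even subspace starts at $2$ (the quadratic Hermite polynomials), one has
$$\mathrm{Var}_\gamma(f)\le \tfrac12 \int|\nabla f|^2\,d\gamma$$
for every even $f\in L^2(\gamma)$. I would apply this to $f=|T|^2$. Computing $\nabla f=2(DT)^\top T$ and using $\|DT\|_{\mathrm{op}}\le 1$ to estimate $|\nabla f|^2\le 4|T|^2$ yields
$$\mathrm{Var}_\gamma(|T|^2)\le 2\int|T|^2\,d\gamma.$$
Transporting back through $T$, the left-hand side equals $\mathrm{Var}_{\gamma|_K}(|y|^2)$ and the right-hand side equals $2\int|y|^2\,d\gamma|_K$, which is exactly the claimed inequality.

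The main obstacle is producing the factor $\tfrac12$ in the Poincar\'e-type step: the naive Brascamp-Lieb inequality applied directly to $|x|^2$ on $\gamma|_K$ (whose potential has Hessian equal to $I$ in the interior) only gives $\mathrm{Var}_{\gamma|_K}(|x|^2)\le 4\int|x|^2\,d\gamma|_K$, losing the sharp constant by a factor of $2$. It is precisely the detour through the $1$-Lipschitz Brenier map, combined with the stronger spectral gap available on the even subspace of the full Gaussian, that rescues the correct constant. If $K$ is not assumed symmetric but only has Gaussian barycenter at the origin, $T$ need not be odd; in that case I would subtract from $|T|^2$ its Hermite projections of degrees $0$ and $1$, apply the spectral gap of $2$ to the remainder, and use the barycenter condition together with Gaussian integration by parts to absorb the degree-$1$ correction back into the $\int|T|^2 d\gamma$ term.
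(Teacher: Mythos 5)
Your main argument for symmetric $K$ is correct and is essentially the Cordero-Erausquin--Fradelizi--Maurey proof: push $\gamma$ forward to $\gamma|_K$ by the $1$-Lipschitz odd Brenier map $T$ (Caffarelli's contraction theorem), and apply the spectral gap $2$ available on the even Hermite subspace of the Ornstein--Uhlenbeck generator to the even function $|T|^2$. You also correctly diagnose why the transport detour is needed: the direct Brascamp--Lieb bound on $\gamma|_K$ only gives $\mathrm{Var}(|x|^2)\le 4\int|x|^2$, and it is the combination of $\|DT\|_{\mathrm{op}}\le 1$ with the sharper even-subspace Poincar\'e constant $\tfrac12$ on the full Gaussian that recovers the constant $2$.

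The closing sketch for barycentered, non-symmetric $K$ does not work as described, and this is a genuine gap. Decompose $|T|^2 = c_0 + \ell + g$ into Hermite components of degree $0$, $1$, and $\ge 2$. Applying the spectral gap only to $g$ yields
$$\mathrm{Var}_\gamma(|T|^2) = \|\ell\|^2 + \|g\|^2 \le \|\ell\|^2 + \tfrac12\bigl(\textstyle\int|\nabla|T|^2|^2\,d\gamma - \|\ell\|^2\bigr) \le \tfrac12\|\ell\|^2 + 2\textstyle\int|T|^2\,d\gamma,$$
so the correction term $\tfrac12\|\ell\|^2$ appears with the \emph{wrong} sign and cannot be ``absorbed.'' Moreover the barycenter condition $\int T\,d\gamma=0$ forces the degree-$1$ Hermite coefficients of the \emph{components} $T_j$ to vanish, but not those of $|T|^2$: one has $\int x_i|T|^2\,d\gamma = \int \partial_i(|T|^2)\,d\gamma = 2\int\langle \partial_i T, T\rangle\,d\gamma$, and Gaussian integration by parts just reproduces the same quantity rather than showing it is zero. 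So $\ell$ need not vanish, and your argument proves the lemma only in the symmetric case (which is what \cite{CFM} actually establish); it does not extend to barycentered non-symmetric convex bodies without a fundamentally different idea.
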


Lemma \ref{bconj-anal}, in conjunction with Lemma \ref{nabla V}, implies:

\begin{lemma}\label{cl2}
Pick any $p\in [0,1].$ Let
$$u(x)=\frac{|x|^2}{2}$$ 
on $K$. Let 
$$F=Lu=n-|x|^2$$ 
on $K.$ Then
$$
\int ||\nabla^2 u||^2+|\nabla u|^2\geq 
$$
\begin{equation}\label{maineq-end}
Var(F)+\frac{p}{n}\left(\int F\right)^2+\frac{1-p}{\gamma(K)}\int_{\partial K} \frac{\langle \nabla u,n_x\rangle^2}{\langle x,n_x\rangle}d\gamma_{\partial K}(x).
\end{equation}
\end{lemma}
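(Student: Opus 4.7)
\textbf{Proof proposal for Lemma \ref{cl2}.} The plan is to compute every term in (\ref{maineq-end}) explicitly with $u(x)=|x|^2/2$, then reduce the whole inequality to a one-variable polynomial inequality in the scalar $M=\int|x|^2$, which is then handled by the Cordero-Fradelizi-Maurey B-conjecture inequality of Lemma \ref{bconj-anal} together with the elementary bound $0\le M\le n$.

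First I compute the local quantities. With $u=|x|^2/2$ we have $\nabla u=x$, $\nabla^2 u=\mathrm{Id}$, so $\|\nabla^2 u\|^2=n$ and $|\nabla u|^2=|x|^2$. Since $V(x)=|x|^2/2+\text{const.}$, we get $\nabla^2 V=\mathrm{Id}$, $\langle\nabla^2 V\nabla u,\nabla u\rangle=|x|^2$, and $Lu=\Delta u-\langle\nabla u,\nabla V\rangle=n-|x|^2=F$. Therefore the left hand side of (\ref{maineq-end}) is exactly $n+M$, while $\int F=n-M$ and $\mathrm{Var}(F)=\mathrm{Var}(|x|^2)$.

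Next I handle the boundary term. Using $\nabla u=x$, the integrand equals $\langle x,n_x\rangle^2/\langle x,n_x\rangle=\langle x,n_x\rangle$, and the divergence theorem (applied to the vector field $x\,e^{-|x|^2/2}$) gives
\begin{equation*}
\frac{1}{\gamma(K)}\int_{\partial K}\langle x,n_x\rangle\,d\gamma_{\partial K}=\frac{1}{\gamma(K)}\int_K (n-|x|^2)\,d\gamma=n-M.
\end{equation*}
Since $K$ has Gaussian barycenter at the origin, the origin belongs to $K$, and hence $\langle x,n_x\rangle\ge 0$ on $\partial K$; this immediately gives the important side bound $M\le n$. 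The trivial bound $M\ge 0$ is obvious.

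Putting these computations together, (\ref{maineq-end}) reduces to
\begin{equation*}
n+M\ge \mathrm{Var}(|x|^2)+\frac{p}{n}(n-M)^2+(1-p)(n-M).
\end{equation*}
Applying Lemma \ref{bconj-anal} to bound $\mathrm{Var}(|x|^2)\le 2M$, it suffices to establish
\begin{equation*}
n-M\ge \frac{p}{n}(n-M)^2+(1-p)(n-M),
\end{equation*}
which, writing $t=n-M$, is equivalent to $p\,t\,(n-t)/n\ge 0$. Since $p\in[0,1]$, $t=n-M\ge 0$ (by the boundary computation above), and $n-t=M\ge 0$, the inequality holds, completing the proof.

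The conceptual work of the lemma is entirely packaged into Lemma \ref{bconj-anal}; the only point requiring a moment's care is recognizing that the quantity $n-M$ is automatically nonnegative because it equals the Gaussian surface integral of $\langle x,n_x\rangle$, which is nonnegative precisely because $0\in K$. Apart from this the argument is a direct substitution.
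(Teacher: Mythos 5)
Your proof is correct and follows essentially the same route as the paper's: compute the boundary integral by the divergence theorem, reduce (\ref{maineq-end}) to a scalar inequality in $M=\int|x|^2$, apply Lemma \ref{bconj-anal}, and conclude from $0\le M\le n$. The only small variation is how you get $M\le n$: you read it off from the nonnegativity of $\int_{\partial K}\langle x,n_x\rangle\,d\gamma_{\partial K}=\gamma(K)(n-M)$ (using $0\in K$), whereas the paper invokes Lemma \ref{nabla V} with $V=|x|^2/2$, which for the Gaussian is the same integration-by-parts identity; your version is if anything slightly cleaner since it does not lean on the symmetry hypothesis stated in Lemma \ref{nabla V}.
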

\begin{proof} Integrating by parts, we see that
$$\int_{\partial K} \frac{\langle \nabla u,n_x\rangle^2}{\langle x,n_x\rangle}d\gamma_{\partial K}(x)=\gamma(K)\left(n-\int |x|^2\right).$$
In view of the fact that $Var(F)=Var(|x|^2),$ and the definition of $u,$ the inequality (\ref{maineq-end}) rewrites as
$$n+\int |x|^2\geq Var(|x|^2)+\frac{p}{n}\left(n-\int |x|^2\right)^2+(1-p)\left(n-\int |x|^2\right),$$
which, by Lemma \ref{bconj-anal}, follows from
$$0\leq p\int |x|^2-\frac{p}{n}\left(\int |x|^2\right)^2.$$
This, in turn, follows from Lemma \ref{nabla V}, applied with $V=\frac{|x|^2}{2}.$
\end{proof}

In view of Lemma \ref{pqconj-byparts}, in order to prove Theorem \ref{dilates}, it is enough to verify the inequality (\ref{approxlocal}) just for the function $f(x)=\langle x,n_x\rangle.$ Therefore, the application of Lemma \ref{cl2} finishes the proof. $\square$


\section{Appendix}

\begin{lemma}
Let $K$ be an origin-symmetric convex body and $w$ a continuous function on $S^{n-1}$. Then, \begin{align*}
    \lim_{\varepsilon\to 0}\frac{\mu(W(h_K + \varepsilon w)) - \mu(K)}{\varepsilon} &= \int_{S^{n-1}} w(\theta) d\sigma_{\mu, K}(\theta).
\end{align*}
\end{lemma}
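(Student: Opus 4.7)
The statement is a first-variation formula for the Wulff shape with respect to the measure $\mu$. The plan is to establish it by polytope approximation, following the strategy of Section 3, and to reduce the computation in the polytope case to the classical Aleksandrov formula for Lebesgue measure.

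First, I would approximate the origin-symmetric convex body $K$ by a sequence of symmetric, strongly isomorphic polytopes $P_j$ converging to $K$ in the Hausdorff metric. Since $h_{P_j} \to h_K$ uniformly on $S^{n-1}$, Hausdorff continuity of the Wulff shape operation gives $W(h_{P_j} + \varepsilon w) \to W(h_K + \varepsilon w)$ in the Hausdorff metric, uniformly for $\varepsilon$ in any bounded interval. Because $g$ is continuous on a compact neighborhood containing all these bodies, one obtains $\mu(W(h_{P_j} + \varepsilon w)) \to \mu(W(h_K + \varepsilon w))$ for each fixed small $\varepsilon$. Moreover, Lemma \ref{hausdorffsurfacearea} yields $\int_{S^{n-1}} w\, d\sigma_{\mu, P_j} \to \int_{S^{n-1}} w \, d\sigma_{\mu, K}$.

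Second, I would verify the formula for each polytope $P$ with facets $F_1, \ldots, F_N$ and outer normals $u_1, \ldots, u_N$. For $\varepsilon$ sufficiently small, $L_\varepsilon := W(h_P + \varepsilon w)$ differs from $P$ only through a thin shell along $\partial P$. Decomposing this shell into contributions from each facet, with lower-dimensional vertex and edge regions contributing only to higher order in $\varepsilon$, one writes
\begin{equation*}
\mu(L_\varepsilon) - \mu(P) = \sum_{i=1}^N \int_{F_i} \int_{0}^{\delta_i(y, \varepsilon)} g(y + t u_i) \, dt \, dH_{n-1}(y) + R(\varepsilon),
\end{equation*}
where the outward displacement $\delta_i(y, \varepsilon)$ of the $i$-th facet in direction $u_i$ equals $\varepsilon w(u_i) + o(\varepsilon)$ by uniform continuity of $w$, together with the observation that constraints from directions $u \ne u_i$ bind only near the $(n-2)$-skeleton of $\partial P$. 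Continuity of $g$ makes the main term equal to $\varepsilon \sum_i w(u_i) \int_{F_i} g \, dH_{n-1} + o(\varepsilon)$, which by (\ref{psurf}) coincides with $\varepsilon \int_{S^{n-1}} w \, d\sigma_{\mu, P} + o(\varepsilon)$; the remainder $R(\varepsilon)$ is controlled as in the classical Aleksandrov expansion.

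Third, I would combine these steps via a double-limit argument. The main obstacle is making the polytope-case error uniform in $j$: as $j \to \infty$ the number of facets of $P_j$ typically grows, and a naive per-vertex bound would degenerate. This is handled by bounding the full second-order Lebesgue error, $\bigl||W(h_{P_j} + \varepsilon w)| - |P_j| - \varepsilon \int w \, dS_{P_j}\bigr| \le C \varepsilon^2$, with $C$ depending only on $\|w\|_\infty$ and a uniform upper bound on the diameters of the $P_j$; this follows from the standard Aleksandrov--Fenchel expansion of Lebesgue volume along Wulff perturbations, which is insensitive to the number of facets. Combined with uniform continuity of $g$ on a compact set containing all the $P_j$, this upgrades to a uniform bound for the $\mu$-version of the error, so that taking $j \to \infty$ followed by $\varepsilon \to 0$ yields the desired identity.
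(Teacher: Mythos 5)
You take a genuinely different route from the paper, and the route as written has a gap. The paper's proof of this lemma does \emph{not} go through polytope approximation at all; it works directly with $K$, following the appendix of \cite{LivMink}. One considers the normal map $X(x,t) = x + t n_x$ on $\widetilde{\partial K}\times[0,\infty)$, whose Jacobian $D(x,t)$ satisfies $D\ge 1$ because convexity makes $X$ expanding; this gives the lower bound on the liminf immediately. For the matching limsup, one restricts to $(\partial K)_\delta$, the set of boundary points touchable from inside by a $\delta$-ball, where a theorem of Hug makes the Gauss map Lipschitz and hence $D(x,t)\le 1 + C(K,n,\delta)\varepsilon$; letting $\delta\to 0$ via dominated convergence finishes the argument. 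In particular, there is no approximating sequence and no Moore--Osgood interchange to justify.

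Your proposal mirrors the paper's treatment of the \emph{second}-derivative formula in Section 3, but the uniformity claim that drives your double-limit interchange is unjustified. You assert $\bigl||W(h_{P_j} + \varepsilon w)| - |P_j| - \varepsilon \int w\,dS_{P_j}\bigr|\le C\varepsilon^2$ with $C$ depending only on $\|w\|_\infty$ and a diameter bound, attributing it to ``the standard Aleksandrov--Fenchel expansion along Wulff perturbations.'' Aleksandrov--Fenchel controls Minkowski linear combinations, i.e.\ perturbations where $w$ is a difference of support functions; for a general continuous $w$ the Wulff shape $W(h_{P_j}+\varepsilon w)$ is not of that form, and $\varepsilon\mapsto |W(h_{P_j}+\varepsilon w)|$ need not be twice differentiable. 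The second-order error for a polytope lives near the $(n-2)$-skeleton of $P_j$, whose $H_{n-2}$-measure typically diverges as the $P_j$ refine a strictly convex $K$ (e.g.\ total edge length of polytopal approximations of a ball in $\mathbb{R}^3$ grows without bound), so a per-ridge estimate cannot give a uniform constant. To salvage this you would have to show the error is governed instead by a quantity that stays bounded along the approximation, such as the mixed volume $V(P_j[n-2], B_2^n[2])$, and that this controls the Wulff first-variation error for arbitrary continuous $w$ --- but that is precisely the nontrivial content that needs proof here, not citation. The paper's expanding-map argument avoids this issue entirely.
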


\begin{proof} Our proof follows the proof given in the appendix of \cite{LivMink}. Recall that for $H_{n-1}-$almost every $x \in \partial K$ there exists a unique normal vector $n_x$. Let us denote the subset of $\partial K$ where this occurs by $\widetilde{\partial K}$. Let $X:\widetilde{\partial K} \times [0,\infty) \to \mathbb{R}^n\setminus K$ be defined by $X(x,t) = x + tn_x$, and let $D(x,t)$ be the Jacobian of this map. Moreover, from properties of Wulff shapes, we have that $h_{A[h_K + \varepsilon w]}(n_x) \le h_K(n_x) + \varepsilon w(n_x)$ with equality for $H_{n-1}-$almost every $x \in \widetilde{\partial K}$. See Section 7.5 in Schneider \cite{book4}. Let $\widetilde{\partial K}' \subset \widetilde{\partial K}$ be the subset where we have equality. Then, \begin{align*}
    \frac{1}{\varepsilon}(\mu(A[h_K + \varepsilon w]) - \mu(K)) &= \frac{1}{\varepsilon}\int_{\widetilde{\partial K}'}\int_{0}^{\varepsilon w(n_x)}D(x,t)g(x+tn_x)dt dH_{n-1}(x).
\end{align*} Observe that $X(x,t)$ is an expanding map. Indeed, for $x_1, x_2 \in \widetilde{\partial K}$ and $t_1, t_2 \in [0,\infty)$ we have \begin{align}\label{expanding}\begin{split}
    |X(x_1, t_1) - X(x_2, t_2)|^2 &= |x_1 + t_1 n_{x_1} - x_2 - t_2 n_{x_2}|^2 \\ &= |x_1 - x_2|^2 + |t_1n_{x_1} - t_2 n_{x_2}|^2 + t_1 \langle x_1 - x_2, n_{x_1}\rangle + t_2 \langle x_2-x_1, n_{x_2} \rangle.
\end{split}
\end{align} Since $K$ is convex, we have $\langle x_1, n_{x_1}\rangle \ge \langle x_2, n_{x_1}\rangle$ and $\langle x_2, n_{x_2}\rangle \ge \langle x_1, n_{x_2}\rangle$. Therefore, \begin{align*}
    |X(x_1, t_1) - X(x_2, t_2)| &\ge |x_1-x_2|^2 + |t_1n_{x_1} - t_2 n_{x_2}|^2 \\ &\ge |x_1-x_2|^2 + |t_1-t_2|^2
\end{align*} as desired. It follows that $D(x,t) \ge 1$, and so \begin{align}\label{lowerbound1}
\begin{split}
   \liminf_{\varepsilon \to 0}\frac{1}{\varepsilon}(\mu(A[h_K + \varepsilon w]) - \mu(K)) &\ge \liminf_{\varepsilon \to 0}\frac{1}{\varepsilon}\int_{\widetilde{\partial K}'}\int_{0}^{\varepsilon w(n_x)}g(x+tn_x) dt dH_{n-1}(x) \\ &= \int_{\widetilde{\partial K}'}w(n_x)g(x) dH_{n-1}(x).
\end{split}
\end{align} Since $\partial K\setminus \widetilde{\partial K}'$ has $H_{n-1}-$measure zero, we get that \begin{align}\label{lowerbound2}
\begin{split}
    \liminf_{\varepsilon \to 0}\frac{1}{\varepsilon}(\mu(A[h_K + \varepsilon w]) - \mu(K)) &\ge \int_{\partial K} w(n_x) g(x) dH_{n-1}(x) \\ &= \int_{S^{n-1}}w(\theta) d\sigma_{\mu, K}(\theta).
\end{split}
\end{align} We now pursue the reverse inequality. For an arbitrary $\delta > 0$, define \begin{align*}
    (\partial K)_{\delta} &= \{x \in \partial K: \exists \ a\in \mathbb{R}^n \text{ s.t. } x \in B(a,\delta) \subset K\}
\end{align*} where $B(a,\delta)$ is the Euclidean ball $\{y \in \mathbb{R}^n: |y-a|<\delta\}$. For a sufficiently small $\varepsilon>0$, take $0 \le t_1, t_2 \le \varepsilon$ and $x_1, x_2 \in (\partial K)_{\delta}$. From (\ref{expanding}), we have \begin{align*}
    |X(x_1, t_1) - X(x_2, t_2)| &\le |x_1-x_2|^2 + |t_1 - t_2|^2 + \varepsilon^2 |n_{x_1} - n_{x_2}|^2 + \varepsilon \langle x_1-x_2, n_{x_1} - n_{x_2}\rangle.
\end{align*} Now, it is a result of Hug \cite{Hug} that the Gauss map is Lipschitz on $(\partial K)_{\delta}$. Let us denote the Lipschitz constant by $L(\delta)$. Then \begin{align*}
    \frac{|x_1-x_2|^2+|t_1-t_2|^2+\varepsilon^2|n_{x_1} - n_{x_2}|^2 + \varepsilon \langle x_1 - x_2, n_{x_1} - n_{x_2}\rangle}{|x_1-x_2|^2 + |t_1-t_2|^2} &\le 1 + L(\delta)\varepsilon + L(\delta)^2 \varepsilon^2.
\end{align*} Hence, \begin{align*}
    D(x,t) &\le (1 + L(\delta)\varepsilon + L(\delta)^2 \varepsilon^2)^{n-1} \le 1 + C(K,n,\delta) \varepsilon.
\end{align*} We have therefore \begin{align*}
    \limsup_{\varepsilon\to 0}\frac{1}{\varepsilon}\int_{(\partial K)_{\delta} \cap \widetilde{\partial K}'}\int_{0}^{\varepsilon w(n_x)}D(x,t)g(x+tn_x)dt dH_{n-1}(x) &\le \int_{(\partial K)_{\delta} \cap \widetilde{\partial K}'}w(n_x) g(x) dH_{n-1}(x) \\ &= \int_{(\partial K)_{\delta}}w(n_x)g(x) dH_{n-1}(x).
\end{align*} Since $D(x,t) \ge 1$, we have as in (\ref{lowerbound1}) also that \begin{align*}
    \liminf_{\varepsilon \to 0}\frac{1}{\varepsilon}\int_{(\partial K)_{\delta} \cap \widetilde{\partial K}'}\int_{0}^{\varepsilon w(n_x)}D(x,t)g(x+tn_x)dt dH_{n-1}(x) &\ge \int_{(\partial K)_{\delta}}w(n_x) g(x) dH_{n-1}(x).
\end{align*} It follows that the limit in $\varepsilon$ exists and \begin{align*}
    \lim_{\varepsilon \to 0}\frac{1}{\varepsilon}\int_{(\partial K)_{\delta} \cap \widetilde{\partial K}'}\int_{0}^{\varepsilon w(n_x)}D(x,t)g(x+tn_x)dt dH_{n-1}(x) &= \int_{(\partial K)_{\delta}}w(n_x) g(x) dH_{n-1}(x).
\end{align*} By the dominated convergence theorem and lower semi-continuity, \begin{align*}
    \limsup_{\varepsilon\to 0}\frac{1}{\varepsilon}(\mu(A[h_K + \varepsilon w]) &- \mu(K)) = \limsup_{\varepsilon\to 0}\frac{1}{\varepsilon}\int_{\widetilde{\partial K}'}\int_{0}^{\varepsilon w(n_x)}D(x,t)g(x+tn_x)dt dH_{n-1}(x) \\ &= \limsup_{\varepsilon\to 0}\lim_{\delta \to 0}\frac{1}{\varepsilon}\int_{(\partial K)_{\delta} \cap \widetilde{\partial K}'}\int_{0}^{\varepsilon w(n_x)}D(x,t)g(x+tn_x)dt dH_{n-1}(x) \\ &= \lim_{\delta \to 0}\lim_{\varepsilon\to 0}\frac{1}{\varepsilon}\int_{(\partial K)_{\delta} \cap \widetilde{\partial K}'}\int_{0}^{\varepsilon w(n_x)}D(x,t)g(x+tn_x)dt dH_{n-1}(x) \\ &= \lim_{\delta \to 0}\int_{(\partial K)_{\delta}}w(n_x)g(x) dH_{n-1}(x) \\ &= \int_{\widetilde{\partial K}}w(n_x)g(x)dH_{n-1}(x) \\ &= \int_{\partial K}w(n_x)g(x)dH_{n-1}(x) \\ &= \int_{S^{n-1}}w(\theta) d\sigma_{\mu, K}(\theta).
\end{align*} Combining this with (\ref{lowerbound2}) gives us the desired conclusion.
\end{proof}

\end{document}